\def\<{\langle}
\def\>{\rangle}
\newcommand{\fka}{\ensuremath{\mathfrak{a}}\xspace}
\newcommand{\fkd}{\ensuremath{\mathfrak{d}}\xspace}
\newcommand{\fkp}{\ensuremath{\mathfrak{p}}\xspace}
\newcommand{\fkD}{\ensuremath{\mathfrak{D}}\xspace}
\newcommand{\nat}{{\natural}}
\newcommand{\BA}{\ensuremath{\mathbb {A}}\xspace}
\newcommand{\BC}{\ensuremath{\mathbb {C}}\xspace}
\newcommand{{\BG}}{\ensuremath{\mathbb {G}}\xspace}
\newcommand{{\BK}}{\ensuremath{\mathbb {K}}\xspace}
\newcommand{\BM}{\ensuremath{\mathbb {M}}\xspace}
\newcommand{\BQ}{\ensuremath{\mathbb {Q}}\xspace}
\newcommand{\BR}{\ensuremath{\mathbb {R}}\xspace}
\newcommand{\BS}{\ensuremath{\mathbb {S}}\xspace}
\newcommand{\BZ}{\ensuremath{\mathbb {Z}}\xspace}
\newcommand{\CF}{\ensuremath{\mathcal {F}}\xspace}
\newcommand{\CK}{\ensuremath{\mathcal {K}}\xspace}
\newcommand{\CL}{\ensuremath{\mathcal {L}}\xspace}
\newcommand{\CM}{\ensuremath{\mathcal {M}}\xspace}
\newcommand{\CO}{\ensuremath{\mathcal {O}}\xspace}
\newcommand{\CP}{\ensuremath{\mathcal {P}}\xspace}
\newcommand{\CQ}{\ensuremath{\mathcal {Q}}\xspace}
\newcommand{\CR}{\ensuremath{\mathcal {R}}\xspace}
\newcommand{\CV}{\ensuremath{\mathcal {V}}\xspace}
\newcommand{\CZ}{\ensuremath{\mathcal {Z}}\xspace}
\newcommand{\RH}{\ensuremath{\mathrm {H}}\xspace}
\newcommand{\RT}{\ensuremath{\mathrm {T}}\xspace}
\newcommand{\RU}{\ensuremath{\mathrm {U}}\xspace}
\newcommand{\RV}{\ensuremath{\mathrm {V}}\xspace}
\newcommand{\ad}{{\mathrm{ad}}}
\newcommand{\aform}{\ensuremath{\langle\text{~,~}\rangle}\xspace}
\DeclareMathOperator{\charac}{char}
\DeclareMathOperator{\cok}{cok}
\DeclareMathOperator{\diag}{diag}
\DeclareMathOperator{\End}{End}
\DeclareMathOperator{\Gal}{Gal}
\newcommand{\GL}{\mathrm{GL}}
\newcommand{\GSp}{\mathrm{GSp}}
\newcommand{\GU}{\mathrm{GU}}
\newcommand{\Herm}{\mathrm{Herm}}
\DeclareMathOperator{\Hom}{Hom}
\newcommand{\HT}{\ensuremath{\mathrm{HT}}\xspace}
\renewcommand{\i}{^{-1}}
\newcommand{\id}{\ensuremath{\mathrm{id}}\xspace}
\newcommand{\inv}{{\mathrm{inv}}}
\DeclareMathOperator{\Isom}{Isom}
\DeclareMathOperator{\Lie}{Lie}
\newcommand{\LNSch}{\ensuremath{(\mathrm{LNSch})}\xspace}
\newcommand{\naive}{\ensuremath{\mathrm{naive}}\xspace}
\DeclareMathOperator{\Nm}{Nm}
\DeclareMathOperator{\ord}{ord}
\DeclareMathOperator{\rank}{rank}
\newcommand{\ram}{\ensuremath{\mathrm{ram}}\xspace}
\DeclareMathOperator{\Res}{Res}
\DeclareMathOperator{\Ros}{Ros}
\newcommand{\sform}{\ensuremath{(\text{~,~})}\xspace}
\DeclareMathOperator{\sig}{sig}
\DeclareMathOperator{\Spec}{Spec}
\newcommand{\ssm}{\smallsetminus}
\newcommand{\surj}{\twoheadrightarrow}
\DeclareMathOperator{\tr}{tr}
\newcommand{\U}{\mathrm{U}}
\DeclareMathOperator{\uHom}{\underline{Hom}}
\DeclareMathOperator{\uIsom}{\underline{Isom}}
\newcommand{\un}{\mathrm{un}}
\newcommand{\wt}{\widetilde}
\newcommand{\wh}{\widehat}
\newcommand{\ov}{\overline}
\newcommand{\lra}{\longrightarrow}
\newcommand{\la}{\langle}
\newcommand{\ra}{\rangle}
\newtheorem{theorem}{Theorem}
\newtheorem{lemma}[theorem]{Lemma}
\theoremstyle{definition}
\newtheorem{definition}[theorem]{Definition}
\newtheorem{example}[theorem]{Example}
\newtheorem{remark}[theorem]{Remark}
\newenvironment{altenumerate}
   {\begin{list}
      {\textup{(\theenumi)} }
      {\usecounter{enumi}
       \setlength{\labelwidth}{0pt}
       \setlength{\labelsep}{0pt}
       \setlength{\leftmargin}{0pt}
       \setlength{\itemsep}{\the\smallskipamount}
       \renewcommand{\theenumi}{\roman{enumi}}
      }}
   {\end{list}}
\newenvironment{altitemize}
   {\begin{list}
      {$\bullet$}
      {\setlength{\labelwidth}{0pt}
	   \setlength{\itemindent}{5pt}
       \setlength{\labelsep}{5pt}
       \setlength{\leftmargin}{0pt}
       \setlength{\itemsep}{\the\smallskipamount}
      }}
   {\end{list}}
\numberwithin{equation}{section}
\numberwithin{theorem}{section}
\renewcommand{\to}{%
   \ifbool{@display}{\longrightarrow}{\rightarrow}%
   }
\let\shortmapsto\mapsto
\renewcommand{\mapsto}{%
   \ifbool{@display}{\longmapsto}{\shortmapsto}%
   }
\newcommand{\hooklongrightarrow}{\mathrel{\mkern 0.5mu\lhook\mkern -3.5mu\relbar\mkern -3mu \rightarrow }}
\newcommand{\inj}{%
   \ifbool{@display}{\hooklongrightarrow}{\hookrightarrow}
   }
\newcommand{\isoarrow}{%
   \ifbool{@display}{\overset{\sim}{\longrightarrow}}{\xrightarrow\sim}%
   }
\newlength{\olen}
\newlength{\ulen}
\newlength{\xlen}
\newcommand{\xra}[2][]{%
   \ifbool{@display}%
      {\settowidth{\olen}{$\overset{#2}{\longrightarrow}$}%
       \settowidth{\ulen}{$\underset{#1}{\longrightarrow}$}%
       \settowidth{\xlen}{$\xrightarrow[#1]{#2}$}%
       \ifdimgreater{\olen}{\xlen}%
          {\underset{#1}{\overset{#2}{\longrightarrow}}}%
          {\ifdimgreater{\ulen}{\xlen}%
             {\underset{#1}{\overset{#2}{\longrightarrow}}}
             {\xrightarrow[#1]{#2}}}}%
      {\xrightarrow[#1]{#2}}
   }
\newcommand{\xyra}[2][]{%
   \settowidth{\xlen}{$\xrightarrow[#1]{#2}$}%
   \ifbool{@display}%
      {\settowidth{\olen}{$\overset{#2}{\longrightarrow}$}%
       \settowidth{\ulen}{$\underset{#1}{\longrightarrow}$}%
       \ifdimgreater{\olen}{\xlen}%
          {\mathrel{\xymatrix@M=.12ex@C=3.2ex{\ar[r]^-{#2}_-{#1} &}}}%
          {\ifdimgreater{\ulen}{\xlen}%
             {\mathrel{\xymatrix@M=.12ex@C=3.2ex{\ar[r]^-{#2}_-{#1} &}}}
             {\mathrel{\xymatrix@M=.12ex@C=\the\xlen{\ar[r]^-{#2}_-{#1} &}}}}}%
      {\mathrel{\xymatrix@M=.12ex@C=\the\xlen{\ar[r]^-{#2}_-{#1} &}}}%
   }
\newcommand{\xla}[2][]{%
   \ifbool{@display}%
      {\settowidth{\olen}{$\overset{#2}{\longleftarrow}$}%
       \settowidth{\ulen}{$\underset{#1}{\longleftarrow}$}%
       \settowidth{\xlen}{$\xleftarrow[#1]{#2}$}%
       \ifdimgreater{\olen}{\xlen}%
          {\underset{#1}{\overset{#2}{\longleftarrow}}}%
          {\ifdimgreater{\ulen}{\xlen}%
             {\underset{#1}{\overset{#2}{\longleftarrow}}}
             {\xleftarrow[#1]{#2}}}}%
      {\xleftarrow[#1]{#2}}
   }
\renewcommand{\lra}{%
   \ifbool{@display}{\longleftrightarrow}{\leftrightarrow}%
   }
\newcommand{\undertilde}{\raisebox{0.4ex}{\smash[t]{$\scriptstyle\sim$}}}
\begin{document}

\title{On Shimura varieties for unitary groups}

\author{M. Rapoport}
\address{Mathematisches Institut der Universit\"at Bonn, Endenicher Allee 60, 53115 Bonn, Germany, and University of Maryland, Department of Mathematics, College Park, MD 20742, USA}
\email{rapoport@math.uni-bonn.de}
\author{B. Smithling}
\address{University of Maryland, Department of Mathematics, College Park, MD 20742, USA}
\email{bds@umd.edu}
\author{W. Zhang}
\address{Massachusetts Institute of Technology, Department of Mathematics, 77 Massachusetts Avenue, Cambridge, MA 02139, USA}
\email{weizhang@mit.edu}
\thanks{M.R. is supported by a grant from  the Deutsche Forschungsgemeinschaft through the grant SFB/TR 45 and by funds connected with the Brin E-Nnovate Chair at the University of Maryland. B.S. is supported by NSA Grant H98230-16-1-0024 and Simons Foundation Grant $\#585707$. W.Z. is supported by NSF DMS $\#$1901642.}

\date{\today}

\begin{abstract}
This is a largely expository article based on our paper \cite{RSZ3} on arithmetic diagonal cycles on unitary Shimura varieties. We define a class of Shimura varieties closely related to unitary groups which represent a moduli problem of abelian varieties with additional structure, and which admit interesting algebraic cycles.   We generalize  to arbitrary signature type the results of loc.\ cit.\ valid under special signature conditions. We  compare our Shimura varieties with other unitary Shimura varieties.
\end{abstract}

\dedicatory{To David Mumford on his 81st birthday}

\date{\today}
\maketitle
\tableofcontents
\section{Introduction}\label{s:intro}
In \cite{DelBour}, Deligne gives the definition of a Shimura variety $(S(G, \{h \})_{K})$ (a tower of quasi-projective complex varieties indexed by sufficiently small compact open subgroups $K\subset G(\BA_f)$) starting from a Shimura datum  $(G, \{ h \})$. He defines the associated Shimura field $E(G, \{h \})$ and proves that there is at most one \emph{canonical model} of $(S(G, \{h \})_{K})$ over $E(G, \{h \})$. He then goes on to construct the canonical model for some Shimura varieties associated to classical groups, by giving an interpretation of the varieties in the tower as moduli spaces of abelian varieties with additional structure. The basic example is given by the group $G$ of symplectic similitudes with its natural conjugacy class $\{h\}$ (the \emph{Siegel case})---this case leads to the moduli space of principally polarized abelian varieties. Let $p$ be a prime number. For open compact subgroups $K$ of the form $K=K^p \times K_p$, where $K^p\subset G(\BA_f^p)$ and where $K_p\subset G(\BQ_p)$ is the stabilizer of a self-dual lattice (i.e., $K_p$ is hyperspecial), this moduli description allows one to  extend the model over $\BQ$ of $(S(G, \{h \})_{K})$ to a model over the localization $\BZ_{(p)}$ with good reduction modulo $p$.

Another class of examples is  related to unitary groups.   The case of the group of unitary similitudes is treated briefly by  Deligne  in \cite{DelBour} and  in detail by Kottwitz  in \cite{K-points}. It is considerably more difficult than the Siegel case due to the failure of the Hasse principle for these groups. A special case of these Shimura varieties is used in all proofs  of the local Langlands conjecture for $p$-adic fields (the \emph{Harris-Taylor} case). It turns out that if $n$ is even, the Shimura variety represents a moduli problem of abelian varieties with additional structure; if $n$ is odd, the analogous moduli problem is represented by a finite disjoint sum of copies of the Shimura variety. If $K=K^p \times K_p$, where $K_p$ is hyperspecial, Kottwitz defines a $p$-integral model of the corresponding model over $E(G, \{h\})$. 

Another case is given by the unitary groups. This case is considered in \cite{GGP}, in the \emph{fake Drinfeld signature} case (see Example \ref{eg:special types} below), and is used  in the formulation of the Arithmetic Gan--Gross--Prasad conjecture. This class of Shimura varieties is not of \emph{PEL type}, i.e., is not represented by a moduli problem of abelian varieties with additional structure. It is, however, of \emph{abelian type}. This entails that its canonical model is defined in Deligne \cite{DCorv}. It also implies that, by Kisin--Pappas \cite{KP}, it has a $p$-integral model when $K=K^p \times K_p$, where $K_p$ is an arbitrary parahoric subgroup.  However, both constructions are rather indirect and yield models  which are difficult to analyze. This seems to be a major impediment to progress on these Shimura varieties. 
 
In this paper, following \cite{RSZ3},  we formulate a new  variant of the Deligne--Kottwitz Shimura varieties and compare it with the previous two classes. Our variant has the advantage of always representing a moduli problem of abelian varieties. By extending the moduli problem, we also define $p$-integral models when $K=K^p \times K_p$, with $K_p$ a parahoric subgroup. In fact, under certain special circumstances, we even define global integral models (in the fake Drinfeld case). Another advantage of our  variant is that it always accommodates the algebraic cycles that appear in the Gan--Gross--Prasad intersection problem and in the Kudla--Rapoport intersection problem. We refer to \cite{RSZ3}, where we give a variant of the Arithmetic Gan--Gross--Prasad conjecture and solve it in certain  low-dimensional cases. In the case of an imaginary quadratic field, our variant Shimura variety appears also in \cite{BHKRY}, with similar aims. However, this special case does not bring out all the  features of our definition; in particular, the \emph{sign invariant} of \cite{RSZ3} does not play any role in that case, comp.\ the table in Section \ref{sumtable} below. 

Our paper is largely expository. One of our aims is to show that the definitions in \cite{RSZ3} extend from the fake Drinfeld signature case to the case of general signature. We also go beyond \cite{RSZ3} in that we also discuss the problem of flatness, resp.\ of smoothness, resp.\ of regularity, of the $p$-integral models in general. Our hope is that the global integral models constructed here will find applications in  arithmetic intersection problems in analogy with those mentioned above.  

The lay-out of the paper is as follows. In Section \ref{s:DK}, we set the stage by recalling the Shimura varieties attached to groups of unitary similitudes and to unitary groups. In Section \ref{s:rsz}, we introduce the variant of these Shimura varieties introduced in \cite{RSZ3}. In Section \ref{s:semiglob}, we define $p$-integral models of these last Shimura varieties. In Section \ref{s:flatsm}, we discuss flatness and smoothness of these $p$-integral models. In Section \ref{s:global}, we discuss global integral models.  In Appendix \ref{s:LM}, we show how the formalism of the local model diagram of \cite{RZ1} holds for all primes $p$ (including $p=2$) in the case of unramified PEL data of type $A$.

The paper is a vastly extended version of the talk with the same title given by one of us (M.R.) at the \emph{SuperAG} in Bonn 2017, at the occasion of his retirement from the University of Bonn. It is also related to his talk at the 2018 Simons conference \emph{Periods and $L$-values of motives}, and to Appendix C in Y. Liu's article \cite{Liu}. 

We dedicate the paper to D.~Mumford on the occasion of his 81st birthday. He is the founder of the theory of moduli spaces of abelian varieties and has been a main force in moving this subject to the forefront of mathematics. On a personal level, one of us (M.R.) owes more to him than can be said in a few lines; he is happy to express here his gratitude.   We also thank the referee for his/her remarks on the paper.

\subsection*{Notation}
In order to deal with \emph{all} congruence subgroups $K$, not only ones that are small enough, we consider the tower $(S(G, \{h \})_K)$ as a tower of \emph{orbifolds}. However, abusing language, we continue to refer to this tower as a Shimura variety.

We write $\BA_F$, $\BA_{F,f}$, and $\BA_{F,f}^p$ for the respective rings of adeles, finite adeles, and finite adeles away from $p$ of a number field $F$. When $F = \BQ$, we abbreviate these to \BA, $\BA_f$, and $\BA_f^p$, respectively, and we set $\ov \BA := \BA \otimes_\BQ \ov\BQ$. Here $\ov\BQ$ denotes the algebraic closure of $\BQ$ in $\BC$.  We fix once and for all an element $\sqrt{-1} \in \BC$.

We take all hermitian forms to be linear in the first variable and conjugate-linear in the second, and we always assume that they are nondegenerate.

When working with vector spaces over $F$, we use a superscript $*$ to denote $F$-linear dual spaces and homomorphisms. We also use a superscript $*$ to denote dual lattices (in both the global and local contexts) with respect to a hermitian form.  By contrast, we always use a superscript $\vee$ to denote dual lattices with respect to a $\BQ$- or $\BQ_p$-valued bilinear form. The following situation (say, in the global context; the local context is completely analogous) will arise repeatedly throughout this paper.  Let $(W,\sform)$ be a hermitian space for $F$ with respect to an order $2$ automorphism (in the paper, $F$ will always be a CM field), let $\zeta \in F$ be a traceless element for this automorphism, and consider the alternating $\BQ$-valued form $\tr_{F/\BQ}\zeta\sform$ on $W$.  Then for any $O_F$-lattice $\Lambda \subset W$, we have $\Lambda^\vee = \zeta\i \fkD\i \Lambda^*$, where $\fkD$ denotes the different of $F/\BQ$.

In the context of abelian schemes, we use a superscript $\vee$ to denote the dual abelian scheme and dual morphisms.  A \emph{quasi-polarization} (sometimes called a \emph{$\BQ$-polarization} in the literature) on an abelian scheme $A$ is a symmetric quasi-isogeny $\lambda\colon A \to A^\vee$ such that, Zariski-locally on the base, $n\lambda$ is an honest polarization for some positive integer $n$. We denote the Rosati involution of a quasi-polarization $\lambda\colon A \to A^\vee$ by $\Ros_\lambda$.  We denote by $\Hom^0(A,B)$, resp.\ $\Hom_{(p)}(A,B)$, the Zariski-sheafification of $U \mapsto \Hom(A_U,B_U) \otimes_\BZ \BQ$, resp.\ $U \mapsto \Hom(A_U,B_U) \otimes_\BZ \BZ_{(p)}$, for $U$ an open subscheme of the base (the homomorphism group in the isogeny category, resp.\ the prime-to-$p$ isogeny category).  We similarly define $\End^0(A)$ and $\End_{(p)}(A)$.  When the base $S$ locally noetherian, we denote by $\wh\RT (A)=\prod\nolimits_{\ell} \RT_\ell(A)$, resp.\ $\wh \RV(A)=\wh\RT (A)\otimes\BQ$, resp.\ $\wh\RV^p(A)=(\prod\nolimits_{\ell\neq p} \RT_\ell(A))\otimes\BQ$, the Tate module, resp.\ the rational Tate module, resp.\ the rational Tate module prime to $p$, all regarded as smooth sheaves on $S$ (assuming that $S$ is a $\BQ$-scheme in the case of $\wh\RT(A)$ and $\wh\RV(A)$, and a $\BZ_{(p)}$-scheme in the case of $\wh\RV^p(A)$).  Similarly, when a number field acts on $A$ up to isogeny and $v$ is a finite place of the number field whose residue characteristic $\ell$ is invertible on $S$, we denote by $\RV_v(A)$ the $v$-factor of $\RV_\ell(A)$.  When furthermore the localization at $\ell$ of the ring of integers of the number field acts on $A$ up to prime-to-$\ell$ isogeny, we denote by $\RT_v(A)$ the $v$-factor of $\RT_\ell(A)$.

We often use a subscript $S$ to denote base change to $S$, and when $S=\Spec A$, we often use the subscript $A$ instead. Similarly, we sometimes write $X\otimes_B A$ to denote $X\times_{\Spec B}\Spec A$. 

We write $\LNSch_{/R}$ for the category of locally noetherian schemes over $\Spec R$ for a ring $R$.

\section{The Shimura varieties of Deligne and Kottwitz, and of Gan--Gross--Prasad}\label{s:DK}

\subsection{The group of symplectic similitudes}\label{ss:Sp}
As motivation, we start with the Siegel case. Let $(W, \aform)$ be a nonzero symplectic vector space of dimension $n=2m$ over $\BQ$. Let $\GSp = \GSp(W, \aform)$ be the  group of symplectic similitudes. Choose a symplectic basis of $W$, i.e., a basis with respect to which the matrix of
$\aform$ is given by 
\[
   H_n=\begin{bmatrix}
   0_m&-1_m\\
   1_m&0_m
   \end{bmatrix},
\]
where the displayed entries are $m\times m$ block matrices. The conjugacy class $\{ h_\GSp \}$ in the Shimura datum is the $\GSp(\BR)$-conjugacy class of the homomorphism
\[
   h_\GSp\colon
   \begin{tikzcd}[baseline=(x.base),row sep=-.5ex]
      |[alias=x]| \BC^\times \ar[r]  &  \GSp(\BR)\\
      a+b\sqrt{-1} \ar[r, mapsto]  &  a 1_n+b H_n.
   \end{tikzcd}
\]

Let $K\subset \GSp(\BA_f)$ be a compact open subgroup. Let $\CF_K$ be the category fibered in groupoids over $\LNSch_{/\BQ}$ which associates to each $\BQ$-scheme $S$ the groupoid of triples $(A, \lambda, \ov\eta)$,
where 
\begin{altitemize}
\item $A$ is an abelian scheme over $S$;
\item $\lambda$ is a quasi-polarization on $A$; and
\item $\ov\eta$ is a $K$-orbit of symplectic similitudes
\begin{equation}\label{level}
   \eta\colon \wh \RV(A) \isoarrow W\otimes \BA_f,
\end{equation}
\end{altitemize}
where $K$ acts via the tautological representation of $\GSp(\BA_f)$ on $W \otimes \BA_f$, cf.\ \cite[\S5]{K-points}.  The morphisms $(A, \lambda, \ov\eta)\to (A', \lambda', \ov\eta')$ in this groupoid are the quasi-isogenies $\mu\colon A\to A'$ such that, Zariski-locally on $S$, the pullback of $\lambda'$ is a $\BQ^\times$-multiple of $\lambda$, and such that the pullback of $\ov\eta'$ is $\ov\eta$.

Note that the Weil form on the rational Tate module $\wh \RV(A)$ defined by $\lambda$ naturally takes values in $\BA_f(1)$; to compare the symplectic forms on both sides of \eqref{level}, it is necessary to choose a trivialization $\BA_f(1) \isoarrow \BA_f$, which is unique up to a factor in $\BA_f^\times$. 

The theorem in this context, which is the model of all other theorems in this paper, is the following.
\begin{theorem}\label{thmSp}
The  moduli problem $\CF_K$ is representable by a Deligne--Mumford stack $M_K$ over $\Spec \BQ$, and
$$
   M_K(\BC)=S\bigl(\GSp, \{ h_\GSp \}\bigr)_K ,
$$
compatible with changing $K$. \qed
\end{theorem}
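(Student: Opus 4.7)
The plan is to establish representability by a Deligne--Mumford stack first, and then identify the complex points via the classical analytic uniformization of the Siegel modular variety.

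\textbf{Reduction to a principally polarized problem.} I would first rigidify using a self-dual $\BZ$-lattice $L \subset W$ for the form $\aform$ (which exists because $\aform$ admits a symplectic basis). Inside every quasi-isogeny class of $(A,\lambda)$ over a connected base $S$ one can find a representative in which $\lambda$ is an honest principal polarization and a choice of $\eta$ in the $K$-orbit $\ov\eta$ carries $\wh\RT(A)$ onto $L \otimes \wh\BZ$; two such representatives differ by an automorphism of the rigidified datum. This exhibits $\CF_K$ as equivalent to the classical moduli problem of principally polarized abelian schemes of dimension $m$ equipped with a $K$-orbit of symplectic similitudes $\wh\RT(A) \isoarrow L \otimes \wh\BZ$ (after choosing a trivialization of $\BA_f(1)$). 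Note that the $\BQ^\times$-ambiguity in $\lambda$ and the $\BA_f(1)$-ambiguity in the Weil pairing both match the similitude factor of $\GSp$, so the rigidification is well-defined on isomorphism classes.

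\textbf{Representability.} For $K$ small enough to be contained in a principal congruence subgroup $K(N) \subset \GSp(\wh\BZ)$ with $N\geq 3$, Mumford's theorem (via geometric invariant theory) represents the rigidified problem by a smooth quasi-projective scheme over $\Spec\BQ$. For general $K$, choose a normal subgroup $K' \triangleleft K$ of finite index with $K' \subset K(N)$ for some $N\geq 3$, obtain the scheme $M_{K'}$, and define
\[
   M_K := [M_{K'}/(K/K')].
\]
Because $K/K'$ is a finite group acting on a separated scheme of finite type over $\BQ$, this quotient is a Deligne--Mumford stack, independent of the auxiliary choice of $K'$ by a standard transitivity argument. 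Functoriality in $K$ is built into this construction.

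\textbf{Complex points.} On the analytic side I would use the classical equivalence between complex abelian varieties up to isogeny and polarizable rational Hodge structures of type $\{(-1,0),(0,-1)\}$. For $(A,\lambda,\ov\eta)$ over $\BC$, the Hodge structure on $H_1(A,\BQ)$ is a homomorphism $h_A\colon \BC^\times \to \GSp(H_1(A,\BR))$ which, once $H_1(A,\BQ)$ is identified symplectically with $W$ via (a lift of) $\ov\eta$, lies in the $\GSp(\BR)$-conjugacy class $\{h_\GSp\}$. Passing to isomorphism classes and keeping track of the remaining $\GSp(\BQ)$-ambiguity in the trivialization of $H_1(A,\BQ)$ gives the bijection
\[
   M_K(\BC) \;\isoarrow\; \GSp(\BQ)\backslash \bigl(\{h_\GSp\} \times \GSp(\BA_f)/K\bigr) = S\bigl(\GSp,\{h_\GSp\}\bigr)_K,
\]
compatibly with varying $K$.

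\textbf{Main obstacle.} The substantive work is the representability step, i.e.\ the existence of a coarse moduli scheme for principally polarized abelian schemes with full level-$N$ structure; this is a deep classical theorem of Mumford. The bookkeeping around the $\BQ^\times$-scalar in the polarization and the trivialization of $\BA_f(1)$ is the only genuine subtlety relative to the fine moduli of principally polarized abelian varieties, but once handled the rest is formal.
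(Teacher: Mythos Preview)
The paper gives no proof of this theorem at all: the statement ends with \qed and is presented as a classical input (``the model of all other theorems in this paper''), attributed implicitly to Mumford and Deligne. Your outline is the standard route to this result---rigidify by a self-dual lattice to pass to an isomorphism-class moduli problem of principally polarized abelian schemes with level structure, invoke Mumford's GIT construction for small level, take the finite-group quotient stack for general $K$, and identify $\BC$-points via the Riemann correspondence---and it is correct as written.

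One small comment on your bookkeeping: in the representability step you want $K'$ not just contained in $K(N)$ but actually \emph{normal in $K$}; you do say this, but be aware that the existence of such a $K'$ which is simultaneously a congruence subgroup of level $\geq 3$ is what makes the quotient $[M_{K'}/(K/K')]$ well-defined and independent of choices. Also, your phrase ``a choice of $\eta$ in the $K$-orbit $\ov\eta$ carries $\wh\RT(A)$ onto $L\otimes\wh\BZ$'' is slightly imprecise: what you mean is that within the quasi-isogeny class there is a representative $A'$ such that some $\eta$ carries $\wh\RT(A')$ to $L\otimes\wh\BZ$; this is exactly the argument the paper spells out later in \eqref{comp isom} for the unitary case, and your version of it here is fine.
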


In fact, the tower $(M_K)$ is the canonical model of the Shimura variety $(S(\GSp, \{ h_\GSp \})_K)$.

\subsection{The group of unitary similitudes}\label{ss:GU}
Let $F$ be a CM number field with maximal totally real subfield $F_0$ and nontrivial $F/F_0$-automorphism $a \mapsto \ov a$. Let $n$ be a positive integer. A \emph{generalized CM type of rank $n$} is a function $r\colon \Hom_\BQ(F, \ov\BQ)\to \BZ_{\geq 0}$, denoted $\varphi \mapsto r_\varphi$, such that
\begin{equation}
   r_\varphi+r_{\ov\varphi}=n \quad \text{for all}\quad \varphi ,
\end{equation}
comp.\ \cite{KRnew}.  Here $\ov\varphi$ denotes the precomposition of $\varphi$ by the nontrivial $F/F_0$-automorphism. When $n$ is understood, we also refer to $r$ as a \emph{signature  type}. 
When $n=1$, a generalized CM type is ``the same'' as a usual CM type (i.e., a half-system $\Phi$ of complex embeddings of $F$), via 
\[
   \Phi= \bigl\{\, \varphi\in \Hom_\BQ(F, \ov\BQ) \bigm| r_\varphi=1 \,\bigr\}.
\] 

Fix a CM type $\Phi$ of $F$, and let $(W, \sform)$ be an $F/F_0$-hermitian vector space of dimension $n$. The signatures of $W$ at the archimedean places determine a generalized CM type $r$ of rank $n$, by writing
\[
   \sig W_\varphi=(r_\varphi, r_{\ov\varphi}), \quad \varphi \in \Phi, \quad W_\varphi := W \otimes_{F,\varphi}\BC.
\]
Let $G^\BQ$ be the group of unitary similitudes of $(W, \sform)$, considered as a linear algebraic group over $\BQ$ (with similitude factor in $\BG_m$). For each $\varphi\in\Phi$, choose a $\BC$-basis of $W_\varphi$ with respect to which the matrix of $\sform$ is given by
\begin{equation}\label{diagr}
   \diag(1_{r_\varphi}, -1_{r_{\ov\varphi}}).
\end{equation}
The conjugacy class $\{ h_{G^\BQ} \}$ in the Shimura datum is the $G^\BQ(\BR)$-conjugacy class of the homomorphism $h_{G^\BQ}=(h_{G^\BQ,\varphi})_{\varphi\in\Phi}$, where the components $h_{G^\BQ,\varphi}$ are defined with respect to the inclusion
\begin{equation}\label{G^Q decomp}
   G^\BQ(\BR) \subset \GL_{F \otimes \BR}(W \otimes \BR) \xra[\undertilde]{\Phi} \prod_{\varphi \in \Phi} \GL_\BC(W_\varphi),
\end{equation}
and where each component is defined on $\BC^\times$ by
\[
   h_{G^\BQ,\varphi}\colon z \mapsto \diag(z \cdot 1_{r_\varphi}, \ov z \cdot 1_{r_{\ov\varphi}}).
\]
Then the reflex field $E(G^\BQ, \{ h_{G^\BQ} \})$ is the reflex field $E_r$ of $r$, which is the subfield of $\ov\BQ$ defined by
\begin{equation}
   \Gal(\ov\BQ/E_r)= \bigl\{\, \sigma\in \Gal(\ov\BQ/\BQ) \bigm| \sigma^*(r)=r \,\bigr\} .
\end{equation}

Let $K\subset G^\BQ(\BA_f)$ be a compact open subgroup. Let $\CF_K$ be the category fibered in groupoids over $\LNSch_{/E_r}$ which associates to each $E_r$-scheme $S$ the groupoid of quadruples $(A, \iota, \lambda, \ov\eta)$,
where 
\begin{altitemize}
\item $A$ is an abelian scheme over $S$;
\item $\iota\colon F\to\End^0(A)$ is an action of $F$ on $A$ up to isogeny;
\item $\lambda$ is a quasi-polarization on $A$; and
\item $\ov\eta$ is a $K$-orbit of $\BA_{F,f}$-linear symplectic  similitudes
\[
   \eta\colon \wh \RV(A) \isoarrow W\otimes_\BQ \BA_f,
\]
\end{altitemize}
cf.\ \cite[\S5]{K-points}. Here, as in the Introduction, we equip $W$ with the $\BQ$-symplectic form $\aform =\tr_{F/\BQ} \zeta \sform$ for some fixed element $\zeta \in F^\times$ satisfying $\ov\zeta=-\zeta$. We impose the conditions that
\begin{equation}\label{Ros}
   \Ros_\lambda \bigl(\iota(a)\bigr)=\iota(\ov a)
   \quad\text{for all}\quad
   a\in F,
\end{equation}
and that $A$ satisfies the Kottwitz condition of signature type $r$,
\begin{equation}\label{kottcond}
    \charac\bigl(\iota(a) \mid \Lie A\bigr) = \prod_{\varphi \in \Hom(F,\ov\BQ)} \bigl(T-\varphi(a)\bigr)^{r_\varphi}
    \quad\text{for all}\quad
    a\in F.
\end{equation}
Here the left-hand side in \eqref{kottcond} denotes the characteristic polynomial of the action of $\iota(a)$ on the locally free $\CO_S$-module $\Lie A$;  the right-hand side, which is a priori a polynomial with coefficients in $E_r$, is regarded as an element of $\CO_S[T]$ via the structure morphism. 
The morphisms $(A, \iota, \lambda, \ov\eta) \to (A', \iota', \lambda', \ov\eta')$ in this groupoid are the  $F$-linear quasi-isogenies $\mu\colon A\to A'$ such that, Zariski-locally on $S$, the pullback of $\lambda'$ is a $\BQ^\times$-multiple of $\lambda$, and such that the pullback of $\ov\eta'$ is $\ov\eta$.

The analog of Theorem \ref{thmSp} is as follows.
\begin{theorem}[Kottwitz]\label{thmGU}
The  moduli problem $\CF_K$ is representable by a Deligne--Mumford stack $M_K$ over $\Spec E_r$, and if $n$ is even, 
$$
   M_K(\BC)=S(G^\BQ, \{ h_{G^\BQ} \})_K ,
$$
compatible with changing $K$. If $n$ is odd, then $M_K(\BC)$ is a finite disjoint union of copies of $S(G^\BQ, \{ h_{G^\BQ} \})_K$, again compatible with changing $K$; these copies are enumerated by
\begin{flalign*}
   \phantom{\qed} & &
   \ker^1(\BQ, G^\BQ) := \ker \bigl[ \RH^1\bigl(\BQ, G^\BQ(\ov\BQ)\bigr)\to \RH^1\bigl(\BQ, G^\BQ(\ov\BA)\bigr)\bigr] .
   & & \qed
\end{flalign*}
\end{theorem}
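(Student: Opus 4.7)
The plan is to follow Kottwitz's original approach in \cite{K-points}. For representability, I would exhibit $\CF_K$ as a substack of a suitable Siegel moduli stack from Theorem \ref{thmSp}, applied to the underlying symplectic space $(W, \tr_{F/\BQ}\zeta\sform)$ with a compatible coarser level structure. The $F$-action $\iota$ is a discrete choice by rigidity of endomorphisms of abelian schemes; once $\iota$ is fixed, the Rosati compatibility \eqref{Ros} becomes a closed condition, and the Kottwitz condition \eqref{kottcond} cuts out another closed condition, since the characteristic polynomial of $\iota(a)$ on $\Lie A$ defines a morphism to the affine space of monic polynomials over $E_r$. Combining these yields a DM stack over $\Spec E_r$; the reflex field $E_r$ is forced because the prescribed polynomial in \eqref{kottcond} has coefficients precisely in $E_r$.

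Second, to identify $M_K(\BC)$, I would analyze a complex point $(A, \iota, \lambda, \ov\eta)$ via its rational Hodge structure. The $\BQ$-vector space $W' := H_1(A(\BC), \BQ)$ is naturally an $n$-dimensional $F$-module, and the Riemann form of $\lambda$ together with $\zeta$ equips it with an $F/F_0$-hermitian form. The level structure $\ov\eta$ identifies its finite adelization with that of $(W, \sform)$ up to similitude, while the signature condition \eqref{kottcond} identifies its real localization with that of $W$; thus $W'$ lies in the local genus of $(W, \sform)$. For each fixed isomorphism class $[W']$ in this local genus, the corresponding subset of $M_K(\BC)$ is identified, by the standard Hodge-theoretic dictionary, with the Shimura set $G^\BQ_{W'}(\BQ) \bs \bigl( X_{W'} \times G^\BQ_{W'}(\BA_f) / K \bigr)$, where $G^\BQ_{W'}$ is the unitary similitude group of $W'$ and $X_{W'}$ is its symmetric domain. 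Since $G^\BQ_{W'}$ and $G^\BQ$ coincide at every local place, this is abstractly isomorphic to $S(G^\BQ, \{h_{G^\BQ}\})_K(\BC)$. To enumerate components, I would invoke the standard Galois-cohomological bijection between the set of such isomorphism classes and $\ker^1(\BQ, G^\BQ)$, yielding
\[
   M_K(\BC) = \coprod_{[W'] \in \ker^1(\BQ, G^\BQ)} S\bigl(G^\BQ, \{h_{G^\BQ}\}\bigr)_K .
\]
When $n$ is even, Landherr's theorem (hermitian forms over CM fields of even rank are determined by their local invariants), together with the analogous computation for the similitude factor, gives $\ker^1(\BQ, G^\BQ) = 1$; when $n$ is odd, nontrivial classes in $\ker^1$ account for the extra copies.

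The hardest part will be making the dictionary in the second step fully explicit: one must carefully track the similitude factor in the level structure, the interplay of the CM type $\Phi$, the traceless element $\zeta$, and the Hodge filtration defining $h$, as well as the precise construction of the adelic isomorphism between $W'$ and $W$ from $\ov\eta$. The compatibility of all these choices with the descent to $E_r$ of the canonical model, and the verification that the different copies for $n$ odd are genuinely disjoint in $M_K$ rather than artifacts of a choice, is where the substantive content of Kottwitz's arguments in \cite[\S7, 8]{K-points} resides.
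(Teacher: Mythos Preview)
The paper does not supply a proof of this theorem: note the \qed embedded in the statement itself. The result is attributed to Kottwitz and cited from \cite{K-points}; in this expository paper it functions as background. Your proposal is a faithful outline of Kottwitz's argument in \cite[\S5--8]{K-points}, so in that sense your approach and the paper's ``proof'' coincide by reference.

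One small correction to your sketch: the reason $\ker^1(\BQ, G^\BQ)$ vanishes for even $n$ is not quite Landherr's theorem in the form you state it. Landherr's theorem (the Hasse principle for hermitian forms) holds regardless of parity; the parity enters because $G^\BQ$ is the \emph{similitude} group, and the relevant computation goes through the center of the Langlands dual group, as in \cite[\S7]{K-points}. The upshot is that for $\GU_n$ the obstruction group is governed by $\pi_0(Z(\widehat G)^\Gamma)$, which is trivial precisely when $n$ is even. Your instinct that ``the similitude factor'' is where the subtlety lies is correct; it is just not a statement about hermitian forms per se.
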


As in the Siegel case, the tower $(M_K)$ is in fact the canonical model of the Shimura variety $(S(G^\BQ, \{ h_{G^\BQ} \})_K)$. 

\begin{example}
\begin{altenumerate}
\item\label{eg:fake drinfeld} (\emph{Fake Drinfeld type}) We say that the generalized CM type $r$ of rank $n$ is of \emph{fake Drinfeld type}
relative to a distinguished element $\varphi_0 \in \Hom_\BQ(F, \ov \BQ)$ if
\[
   r_\varphi=
   \begin{cases}
      n-1,  &  \text{$\varphi = \varphi_0$};\\
        1,  &  \text{$\varphi = \ov\varphi_0$};\\
     \text{$0$ or  $n$},  &  \varphi \in  \Hom_\BQ(F, \ov \BQ)\ssm \{\varphi_0, \ov\varphi_0\}.
   \end{cases}
\]
In this case, $F$ embeds into $E_r$ via $\varphi_0$ for $n\geq 3$. For $n=2$, at least $F_0$ embeds into $E_r$ via $\varphi_0$. For $n = 1$, all we can say is that $E_r$ is the reflex field of the CM type which is the support of $r$.
\item\label{strictFK}(\emph{Strict fake Drinfeld type}) We say that $r$ is of \emph{strict fake Drinfeld type} for the CM type $\Phi$ and an element $\varphi_0 \in \Phi$, and we write $r = r^{(\Phi,\varphi_0)}$, if
\begin{equation}\label{SFDsig}
   r_\varphi^{(\Phi, \varphi_0)}=
   \begin{cases}
      n-1,  &  \text{$\varphi = \varphi_0$};\\
      n,  &  \varphi \in \Phi \ssm \{\varphi_0\}.
   \end{cases}
\end{equation}
\item\label{eg:HT} (\emph{Harris--Taylor type}) This is a special case of strict fake Drinfeld type. Suppose that $F=K_0F_0$, where $K_0$ is an imaginary quadratic field embedded in $\ov\BQ$. Let $\Phi$ be the induced CM type of $F$, i.e.,\ the set of embeddings $F \to \ov\BQ$ whose restriction to $K_0$ is the given embedding.  We fix $\varphi_0\in\Phi$. Then we define $r^\HT := r^{(\Phi, \varphi_0)}$ and $h_{G^\BQ}^\HT := h_{G^\BQ}$. In this case, we can be explicit about the reflex field: $\varphi_0$ identifies $F \isoarrow E_{r^{(\Phi, \varphi_0)}}$, unless $F_0 = \BQ$ and $n = 2$ (then $E_{r^{(\Phi, \varphi_0)}} = \BQ$) or $F_0$ is quadratic over $\BQ$ and $n = 1$ (then $E_{r^{(\Phi, \varphi_0)}}$ identifies via $\varphi_0$ with the unique quadratic subfield of $F$ distinct from $K_0$ and $F_0$).  Note that the book \cite{HT} is about the tower of moduli stacks $(M_K)$, and not about the Shimura variety $(S(G^\BQ, \{ h_{G^\BQ} \})_K)$ (despite the title of the book!).
\end{altenumerate}\label{eg:special types}
\end{example}

\subsection{The unitary group}\label{ss:U}
We continue with the notation of the last subsection, but this time we consider the group
\begin{equation}\label{G}
   G:=\Res_{F_0/\BQ}\U\bigl(W, \sform\bigr).
\end{equation}
The conjugacy class in the Shimura datum for $G$ is the conjugacy class of the homomorphism $h_G=(h_{G,\varphi})_{\varphi\in\Phi}$, where
\[
   h_{G, \varphi} \colon z\mapsto  \diag\bigl(1_{r_\varphi}, (\ov z/z)1_{{r_{\ov\varphi}}}\bigr),
\]
and where the components are defined as in \eqref{G^Q decomp}. The reflex field is the reflex field $E_{r^\natural}$ of the function $r^\natural$,
\begin{equation}
   \Gal(\ov\BQ/E_{r^\natural})= \bigl\{\, \sigma\in\Gal(\ov\BQ/\BQ)\bigm| \sigma^*(r^\natural)=r^\natural \,\bigr\},
\end{equation}
where we define
\[
   r^\natural \colon
   \begin{tikzcd}[baseline=(x.base), row sep=-.5ex, ampersand replacement=\&]
      |[alias=x]| \Hom_\BQ(F, \ov\BQ) \ar[r]  \&  \BZ_{\geq 0}\\
      \varphi \ar[r, mapsto]
        \&  \begin{cases}
              0,  &  \varphi\in \Phi;\\
              r_\varphi,  &  \varphi \in \ov\Phi.
           \end{cases}
   \end{tikzcd}
\]
(Note that $r^\natural$ need not be a generalized CM type.)  The resulting Shimura variety $(S(G, \{ h_G \})_K)$ is not of PEL type, i.e., it is not related to a moduli problem of abelian varieties (this can be seen already from the fact that the restriction of $\{ h_G \}$ to $\BG_m\subset \BS$ is not mapped via the identity map to the center of $G$). However, this Shimura variety is of abelian type.

\begin{example}\label{eg:fake drinfeld unitary}
In the fake Drinfeld case of Example \ref{eg:special types}\eqref{eg:fake drinfeld}, $\varphi_0$ embeds $F$ into $E_{r^\natural}$ for $n \geq 2$.  In the strict fake Drinfeld case relative to $(\Phi,\varphi_0)$ of Example \ref{eg:special types}\eqref{strictFK}, we have $\varphi_0 \colon F \isoarrow E_{r^\natural}$ for all $n \geq 1$.
\end{example}

\begin{remark}\label{GGPsit}
Suppose that $n \geq 2$, and let $u\in W$ be a totally positive vector, i.e., $(u, u)$ is a totally positive element in $F_0$.  Consider the hermitian space $W^\flat := (u)^\perp$.  Then the inclusion $W^\flat\subset W$ induces an inclusion $\U(W^\flat)\subset \U(W)$ of unitary groups (identifying $\U(W^\flat)$ with the stabilizer of $u$). Let
\[
   H := \Res_{F_0/\BQ}\U (W^\flat),
\]
with its Shimura datum $\{ h_H \}$. Using that $u$ is totally positive, one verifies that the inclusion $H\subset G$ is compatible with the Shimura data $\{ h_H \}$ and $\{ h_G \}$. Hence there is an induced morphism of Shimura varieties,
\begin{equation}\label{ggporig morph}
   \Bigl(S\bigl( H,\{h_{ H}\}\bigr)_{K_H}\Bigr) \inj \Bigl(S\bigl( G,\{h_{ G}\}\bigr)_{K_G}\Bigr).
\end{equation}
The resulting cycle leads to the \emph{Kudla--Rapoport cycles} \cite{KR-U2, BHKRY} (in the context of locally symmetric spaces, these are the  \emph{Kudla-Millson cycles}, cf.~\cite{KM}). These are cycles of codimension $\sum_{\varphi\in\Phi} r_{\ov\varphi}$. The most interesting case is the strict fake Drinfeld case of Example \ref{eg:special types}\eqref{strictFK}. In this case, we obtain divisors on the ambient variety. 

Taking the graph morphism of \eqref{ggporig morph}, we obtain a closed embedding of towers,
\begin{equation}\label{ggporig cycle}
   \Bigl(S\bigl( H,\{h_{ H}\}\bigr)_{K_{ H}}\Bigr) \inj \Bigl(S\bigl( H,\{h_{ H}\}\bigr)_{K_{ H}}\Bigr) \times \Bigl(S\bigl( G,\{h_{ G}\}\bigr)_{K_{ G}}\Bigr).
\end{equation}
The resulting cycle in the target of \eqref{ggporig cycle} is the \emph{GGP cycle}, cf.~\cite{GGP}. This is a cycle of codimension $\sum_{\varphi\in\Phi}r_\varphi r_{\ov\varphi}$. The most interesting case is again the strict fake Drinfeld case. In this case, the product variety has dimension $2n-3$, and the cycle has codimension $n-1$, i.e.,\ the codimension is just more than half the dimension of the ambient variety.

Both of these constructions generalize to the case where the totally positive vector $u$ is replaced by an $m$-tuple of totally positive vectors $u_1, \ldots, u_m$ which generate a totally definite subspace of $W$. 
 \end{remark}

\begin{remark}\label{adv/disadv}
Let us discuss some of the advantages and disadvantages of the above Shimura varieties. 
\begin{altenumerate}
\item First consider the Shimura varieties associated to $(G^\BQ, \{ h_{G^\BQ} \})$. On the positive side we note the following.

\smallskip

\begin{altitemize}
\item These Shimura varieties are close to moduli problems---but when $n$ is odd, they are not quite represented by a moduli problem in general. 
\item The method of Kottwitz works for all Shimura varieties of PEL type.
\end{altitemize}
\smallskip

\noindent On the negative side we note the following.

\smallskip
 
\begin{altitemize}
\item For odd $n$, the Shimura varieties are not given by a moduli problem in general.
\item It is difficult to construct integral or $p$-integral models of these Shimura varieties. More precisely, Kottwitz succeeds in constructing a $p$-integral model over $O_{E_r, (p)}$ only under the assumption that all data are unramified at $p$. This last condition means that $p$ is unramified in $F$, that $W$ is split at all places of $F_0$ over $p$, and that $K$ is of the form $K=K^p \times K_p \subset G^\BQ(\BA_f) = G^\BQ(\BA_f^p) \times G^\BQ(\BQ_p)$, where $K^p$ is arbitrary and $K_p$ is the stabilizer of a self-dual lattice in $W\otimes_\BQ \BQ_p$.  But allowing ramification in various forms leads to many new complications.
\item In the context of Remark \ref{GGPsit}, the   inclusion $\U(W^\flat)\subset \U(W)$ of unitary groups does not extend to an inclusion $\GU(W^\flat)\subset \GU(W)$ of groups of unitary similitudes, nor, after Weil restriction down to $\BQ$, to an inclusion of the unitary $\BG_m$-similitude group for $W^\flat$ into $G^\BQ$. Hence there is no Gan--Gross--Prasad set-up in the context of Kottwitz's Shimura varieties, comp.\ \cite{GGP, RSZ3}. Similarly, there are no Kudla--Rapoport cycles   on these Shimura varieties, cf.\ \cite{KR-U2, BHKRY}. See Section \ref{ss:cycles} for the analog of this discussion in the context of the RSZ Shimura varieties.
\end{altitemize}
\item Now let us discuss the Shimura varieties associated to $(G, \{ h_{G} \})$.  On the positive side we note the following.
\smallskip

\begin{altitemize}
 \item The KR cycles and  GGP cycles can be defined for them.
 \item In the strict fake Drinfeld case of Example \ref{eg:special types}\eqref{strictFK}, the Shimura field  is very simple: it identifies with $F$.
\end{altitemize}
\smallskip
\noindent On the negative side we note the following.
\smallskip

\begin{altitemize}
\item Since these Shimura varieties are not of PEL type, it is difficult to construct and control $p$-integral models of them. Since they are at least of abelian type, by Kisin--Pappas \cite{KP} they do have $p$-integral models when $K$ is of the form $K=K^p K_p$, where $K_p$ is a parahoric subgroup. However, these models are not very explicit. 
In particular, it seems difficult to address for  these $p$-integral models the  \emph{Arithmetic Gan--Gross--Prasad conjecture} \cite{GGP}, the \emph{Arithmetic intersection conjecture} of \cite{RSZ3}, and the \emph{Kudla--Rapoport intersection conjecture} \cite[Conj.~11.10]{KR-U2}.

\end{altitemize}
\end{altenumerate}
\end{remark}

\section{The RSZ Shimura varieties}\label{s:rsz}
We continue with the notation $F/F_0$, $r$, $r^\natural$, and $(W, \sform)$ from Sections \ref{ss:GU}--\ref{ss:U}. Again we fix a CM type $\Phi$ of $F$.

\subsection{The torus $Z^\BQ$ and its Shimura variety}\label{ss:Z^Q}
We are first going to consider the Shimura varieties of Section \ref{ss:GU} in the special case that $n = 1$ and $(W,\sform)= (W_0,\sform_0)$ is totally positive definite, i.e., $W_0$ has signature $(1,0)$ at each archimedean place.  In this case, we write $Z^\BQ := G^\BQ$ (a torus over \BQ) and $h_{Z^\BQ} := h_{G^\BQ}$.  Explicitly,
\[
   Z^\BQ = \bigl\{\, z\in \Res_{F/\BQ} \BG_m \bigm| \Nm_{F/F_0} (z)\in \BG_m \,\bigr\},
\]
and the homomorphism $h_{Z^\BQ}\colon \BC^\times \to Z^\BQ(\BR)$ identifies with the diagonal embedding into $(\BC^\times)^\Phi$ with respect to the isomorphism
\[
   Z^\BQ(\BR) \isoarrow \bigl\{\,(z_\varphi)\in (\BC^\times)^\Phi \bigm| |z_\varphi| = |z_{\varphi'}| \ \text{for all}\ \varphi,\varphi' \in \Phi \,\bigl\} \subset (\BC^\times)^\Phi
\]
induced by the isomorphism $\smash{F \otimes \BR \xra[\undertilde]\Phi \BC^\Phi}$.\footnote{We  note  that  \cite{RSZ3}  adopts  the  convention  that $h_{Z^\BQ}$ is  the  analogous  embedding  defined in the case that $W_0$ is totally \emph{negative} definite, which means that it is our $h_{Z^\BQ}$ precomposed by complex conjugation. This difference of convention  results in a number of further differences with \cite{RSZ3} throughout the rest of Section \ref{s:rsz}.\label{foot}}  The reflex field of $(Z^\BQ,\{h_{Z^\BQ}\})$ is $E_\Phi$, the reflex field of $\Phi.$

Let $K_{Z^\BQ} \subset Z^\BQ(\BA_f)$ be a compact open subgroup.  Then we obtain the Deligne--Mumford stack, which we denote by $M_{0,K_{Z^\BQ}}$, representing the moduli problem of Section \ref{ss:GU} for the hermitian space $W_0$. It is a finite \'etale stack over $\Spec E_\Phi$.

By Theorem \ref{thmGU}, the complex fiber $M_{0,K_{Z^\BQ}} \otimes_{E_\Phi} \BC$ is isomorphic to a finite number of copies of the Shimura variety $S(Z^\BQ, \{h_{Z^\BQ}\})_{K_{Z^\BQ}}$.  To make this decomposition more explicit, let us first introduce the following definition.

\begin{definition}
An element $a \in F$ is \emph{$\Phi$-adapted} if $\varphi(a)$ is an $\BR_{>0}$-multiple of $\sqrt{-1} \in \BC$ for all $\varphi \in \Phi$.
\end{definition}

Thus any $F/F_0$-traceless element\footnote{Here the notation $\sqrt\Delta$ reflects the fact that any $\Phi$-adapted element must be a square root of some totally negative element $\Delta \in F_0^\times$, but we note that the element $\Delta$ itself will never play any explicit role for us.}  $\sqrt\Delta \in F^\times$ determines a unique CM type for $F$ to which $\sqrt\Delta$ is adapted, and conversely, by weak approximation, any CM type admits elements adapted to it.  In particular, let us fix a $\Phi$-adapted element $\sqrt\Delta$ for our fixed CM type $\Phi$.  In the notation of the Introduction and Section \ref{ss:GU}, take $\zeta = \sqrt\Delta\i$, so that we endow $W_0$ with the $\BQ$-alternating form $\tr_{F/\BQ} \sqrt\Delta\i \sform_0$ in the definition of the level structure for $M_{0,K_{Z^\BQ}}$. Let $\CR_{W_0,\sqrt\Delta}$ be the set of isometry classes of pairs $(U_0,\aform_0)$ consisting of a one-dimensional $F$-vector space $U_0$ equipped with a nondegenerate $\BQ$-alternating form $\aform_0 \colon U_0 \times U_0 \to \BQ$ such that $\la ax, y \ra_0 = \la x, \ov a y\ra_0$ for all $x,y \in U_0$ and $a \in F$, such that $x \mapsto \la \sqrt\Delta x, x \ra_0$ is a positive definite quadratic form on $U_0$, and such that for all finite primes $p$, the localization $U_0 \otimes \BQ_p$ endowed with its $\BQ_p$-alternating form is $F \otimes \BQ_p$-linearly similar to $(W_0, \tr_{F/\BQ} \sqrt\Delta\i \sform_0)\otimes \BQ_p$ up to a factor in $\BQ_p^\times$. (Thus the pair $(W_0,\tr_{F/\BQ}\sqrt\Delta\i\sform_0)$ tautologically defines a class in $\CR_{W_0,\sqrt\Delta}$.)  Then $\BQ_{>0}$ acts on $\CR_{W_0,\sqrt\Delta}$ by multiplying the form, and by \cite[\S8]{K-points},
\begin{equation}\label{decompM0Cisog}
   M_{0,K_{Z^\BQ}} \otimes_{E_\Phi} \BC \simeq \coprod_{\CR_{W_0,\sqrt\Delta}/\BQ_{>0}} S\bigl(Z^\BQ, \{h_{Z^\BQ}\}\bigr)_{K_{Z^\BQ}}.
\end{equation}
(In terms of Theorem \ref{thmGU}, the set $\CR_{W_0,\sqrt\Delta}/\BQ_{>0}$ is in bijection with $\ker^1(\BQ,Z^\BQ)$ by taking the class of $(W_0,\smash[t]{\tr_{F/\BQ}\sqrt\Delta\i\sform_0})$ as basepoint.) Here the index associated to a $\BC$-valued point $(A_0,\iota_0,\lambda_0, \ov\eta_0)$ of $M_{0,K_{Z^\BQ}}$ is given by the $\BQ^\times$-class of the $F$-vector space $\RH_1(A_0, \BQ)$ endowed with its natural $\BQ$-valued Riemann form induced by $\lambda_0$.  The decomposition on the right-hand side of \eqref{decompM0Cisog} descends to $E_\Phi$, and we accordingly write
\begin{equation}\label{decompM0isog}
   M_{0,K_{Z^\BQ}} = \coprod_{\tau \in \CR_{W_0,\sqrt\Delta}/\BQ_{>0}} M_{0,K_{Z^\BQ}}^\tau.
\end{equation}

\subsection{The RSZ Shimura varieties}\label{ss:tilde G}
The Shimura varietes of \cite{RSZ3} are attached to the group
\begin{equation}\label{wtG}
   \wt G := Z^\BQ \times_{\BG_m} G^\BQ,
\end{equation}
where the maps from the factors on the right-hand side to $\BG_m$ are respectively given by $\Nm_{F/F_0}$ and the similitude character.  In terms of the Shimura data already defined, we obtain a Shimura datum for $\wt G$ by defining the Shimura homomorphism to be
\[
 h_{\wt G}\colon \BC^\times \xra{(h_{Z^\BQ}, h_{G^\BQ})} \wt G(\BR) .	
\]
It is easy to see that $(\wt G, \{h_{\wt G}\})$ has reflex field $E \subset \ov\BQ$ characterized by
\begin{equation}\label{defE}
\begin{aligned}
   \Gal(\ov\BQ/E) &= \bigl\{\, \sigma\in\Gal(\ov\BQ/\BQ) \bigm| \sigma \circ\Phi=\Phi \text{ and } \sigma^*(r)=r \,\bigr\}\\
                  &= \bigl\{\, \sigma\in\Gal(\ov\BQ/\BQ) \bigm| \sigma \circ\Phi=\Phi \text{ and } \sigma^*(r^\natural)=r^\natural \,\bigr\} .
\end{aligned}
\end{equation}
In other words, the reflex field is the common composite $E = E_\Phi E_r = E_\Phi E_{r^\natural}$.

\begin{example}\label{eg:fake drinfeld E}
In the fake Drinfeld case of Example \ref{eg:special types}\eqref{eg:fake drinfeld}, $\varphi_0$ embeds $F$ into $E$ for $n \geq 2$ since $\varphi_0\colon F \to E_{r^\natural} \subset E$, cf.\ Example \ref{eg:fake drinfeld unitary}.  When $n = 1$, the same statement holds in  the strict fake Drinfeld case relative to $\Phi$ of Example \ref{eg:special types}\eqref{strictFK}, but $F$ may fail to embed in $E$ for other signature types of fake Drinfeld type.  In the Harris--Taylor case of Example \ref{eg:special types}\eqref{eg:HT}, we have $\varphi_0 \colon F \isoarrow E$ for any $n \geq 1$.
\end{example}

The various relations between the groups we have introduced give rise to the following relations between Shimura varieties.
\begin{altenumerate}
\item By definition, the natural projection $\wt G \to G^\BQ$ induces a morphism of Shimura data $(\wt G, \{h_{\wt G}\}) \to (G^\BQ, \{h_{G^\BQ}\})$.  Hence there is an induced morphism of Shimura varieties (i.e.,\ a morphism of pro-varieties)
\[
   \Bigl( S\bigl(\wt G, \{h_{\wt G}\}\bigr)_{K_{\wt G}} \Bigr) \to \Bigl( S\bigl(G^\BQ, \{h_{G^\BQ}\}\bigr)_{K_{G^\BQ}} \Bigr),
\]
compatible with the inclusion $E_r\subset E$.
\item The torus $Z^\BQ$ embeds naturally as a central subgroup of $G^\BQ$, which gives rise to a product decomposition
\begin{equation}\label{proddec}
   \begin{gathered}
   \begin{tikzcd}[row sep=0ex]
      \wt G \ar[r, "\sim"]  &  Z^\BQ \times G\\
      (z, g) \ar[r, mapsto]  &  (z, z^{-1}g),
   \end{tikzcd}
   \end{gathered}
\end{equation}
where $G \subset G^\BQ$ is the unitary group \eqref{G}.  The isomorphism \eqref{proddec} extends to a product decomposition of Shimura data,
\begin{equation}\label{reltogross}
   \bigl(\wt G, \{h_{\wt G}\}\bigr) \cong \bigl(Z^\BQ,\{h_{Z^\BQ}\}\bigr) \times \bigl(G,\{h_G\}\bigr),
\end{equation}
and hence there is a product decomposition of Shimura varieties,
\[
   \Bigl( S\bigl(\wt G, \{h_{\wt G}\}\bigr)_{K_{\wt G}} \Bigr) 
      \cong \Bigl( S\bigl(Z^\BQ,\{h_{Z^\BQ}\}\bigr)_{K_{Z^\BQ}} \Bigr) \times \Bigl( S\bigl(G,\{h_G\}\bigr)_{K_G} \Bigr),
\]
compatible with the inclusions $E_\Phi \subset E$ and $E_{r^\natural} \subset E$.
\end{altenumerate}

\subsection{The RSZ moduli problem in terms of isogeny classes}
We are now going to give a moduli interpretation for the canonical model of the Shimura variety $S(\wt G, \{h_{\wt G}\})_{K_{\wt G}}$ over $\Spec E$.  We will only consider subgroups $K_{\wt G}$ which, with respect to the product decomposition \eqref{proddec}, are of the form
\begin{equation}\label{K_wtG*}
   K_{\wt G} = K_{Z^\BQ} \times K_G \subset \wt G(\BA_f) = Z^\BQ(\BA_f) \times G(\BA_f),
\end{equation}
for arbitrary open compact subgroups $K_{Z^\BQ} \subset Z^\BQ(\BA_f)$ and $K_G\subset G(\BA_{f})$.

For the definition of level structures in the moduli problem, we fix a one-dimensional, totally positive definite $F/F_0$-hermitian space $(W_0,\sform_0)$ as in Section \ref{ss:Z^Q}.  We fix a $\Phi$-adapted element $\sqrt\Delta \in F$ and a class $\tau \in \CR_{W_0,\sqrt\Delta}/\BQ_{>0}$, and we recall from \eqref{decompM0isog} the stack $M_{0,K_{Z^\BQ}}^\tau$ attached to $W_0$. (Of course we may take $\tau$ to be the class of $(W_0,\tr_{F/\BQ}\sqrt\Delta\i\sform_0)$, but we do not require this.) We furthermore introduce the $n$-dimensional $F$-vector space
\begin{equation}\label{V}
   V := \Hom_F(W_0,W).
\end{equation}
The space $V$ carries a natural $F/F_0$-hermitian form, under which elements $x,y \in V$ pair to the composite
\begin{equation}\label{V pairing}
   \Bigl[W_0 \xra{x} W \xra{w \shortmapsto (-,w)} W^* \xra{y^*} W_0^* \xra{[w_0 \shortmapsto (-,w_0)_0]\i} W_0 \Bigr] \in \End_F(W_0) \cong F.
\end{equation}
The group $\wt G$ acts naturally by unitary transformations on $V$, given in terms of the defining presentation \eqref{wtG} by $(z,g) \cdot x = gxz\i$. This action factors through the quotient $\wt G \to G$ via \eqref{proddec} and induces $G \cong \Res_{F_0/\BQ}\U(V)$.

We define the following category fibered in groupoids $\CF_{K_{\wt G}}(\wt G)$ over $\LNSch_{/E}$. To lighten notation, we suppress the dependence of this category functor on the element $\tau$.

\begin{definition}\label{def:isogtuple}
The category functor $\CF_{K_{\wt G}}(\wt G)$ associates to each scheme $S$ in $\LNSch_{/E}$ the groupoid of tuples $(A_0,\iota_0,\lambda_0,\ov\eta_0, A,\iota,\lambda,\ov\eta)$, where
\begin{altitemize}
\item $(A_0, \iota_0, \lambda_0, \ov\eta_0)$ is an object of $M_{0,K_{Z^\BQ}}^\tau(S)$;
\item $A$ is an abelian scheme over $S$;
\item $\iota\colon F\to\End^0(A)$ is an action of $F$ on $A$ up to isogeny satisfying the Kottwitz condition \eqref{kottcond}; 
\item $\lambda$ is a quasi-polarization on $A$ whose Rosati involution satisfies condition \eqref{Ros}; and
\item $\ov\eta$ is a $K_G$-orbit (equivalently, a ${K_{\wt G}}$-orbit, where $K_{\wt G}$ acts through its projection $K_{\wt G} \to K_G$) of isometries of $\BA_{F, f}/\BA_{F_0,f}$-hermitian modules 
\begin{equation}\label{eta}
   \eta\colon \wh \RV(A_0, A) \isoarrow V\otimes_F\BA_{F, f}.
\end{equation}
\end{altitemize}
Here
\begin{equation}\label{V(A_0,A)}
   \wh \RV(A_0,A) := \Hom_{\BA_{F,f}}\bigl(\wh \RV(A_0), \wh \RV(A)\bigr) ,
\end{equation}
endowed with its natural $\BA_{F,f}$-valued hermitian form $h$,
\begin{equation}\label{h_A def}
   h(x, y) := \lambda_0\i \circ y^\vee\circ\lambda\circ x\in\End_{\BA_{F, f}}\bigl(\wh \RV(A_0)\bigr)=\BA_{F, f}, \quad x,y \in \wh \RV(A_0,A),
\end{equation}
cf.\ \cite[\S2.3]{KR-U2}.\footnote{To be clear, $y^\vee \colon \wh\RV(A^\vee) \to \wh\RV(A_0^\vee)$ denotes the adjoint of $y$ with respect to the Weil pairings on $\wh\RV(A) \times \wh\RV(A^\vee)$ and $\wh\RV(A_0) \times \wh\RV(A_0^\vee)$.} Furthermore, for any geometric point $\ov s \to S$, the orbit $\ov\eta$ is required to be $\pi_1(S,\ov s)$-stable with respect to the $\pi_1(S,\ov s)$-action on the fiber $\wh \RV(A_0,A)(\ov s) = \Hom_{\BA_{F,f}}(\wh \RV(A_{0, \ov s}), \wh \RV(A_{\ov s}))$ (a condition which holds for all $\ov s$ on a given connected component $S_0$ of $S$ as soon as it holds for a single $\ov s$ on $S_0$); comp.\ \cite[\S5]{K-points} or \cite[Rem.\ 4.2]{KR-U2}.  A morphism $(A_0,\iota_0,\lambda_0,\ov\eta_0,A,\iota,\lambda,\ov\eta) \to (A'_0,\iota'_0,\lambda'_0,\ov\eta_0',A',\iota',\lambda',\ov\eta')$ in this groupoid is given by a pair of $F$-linear quasi-isogenies $\mu_0 \colon A_0 \to A_0'$ and $\mu\colon A \to A'$ such that $\mu_0$ is an isomorphism $(A_0,\iota_0,\lambda_0,\ov\eta_0) \isoarrow (A_0',\iota_0',\lambda_0',\ov\eta_0')$ in $M_{0,K_{Z^\BQ}}^\tau(S)$, such that $\mu^*(\lambda')$ is the same $\BQ^\times$-multiple of $\lambda$ as $\mu_0^*(\lambda_0')$ is of $\lambda_0$ at each point of $S$ (the \emph{multiplier condition}), and such that under the natural isomorphism $\wh\RV(A_0,A) \isoarrow \wh\RV(A_0',A')$ sending $x \mapsto \mu \circ x \circ \mu_0\i$ (which is an isometry by the multiplier condition), $\ov\eta'$ pulls back to $\ov \eta$.
\end{definition}

\begin{remark}\label{V rem}
Let us comment further on the space $V$ introduced in \eqref{V}.  The hermitian forms $\sform_0$ and $\sform$ determine a conjugate-linear isomorphism $V \isoarrow \Hom_F(W,W_0)$, $x \mapsto x^\ad$, characterized by the formula
\[
   (xw_0,w) = (w_0,x^\ad w)_0,
   \quad
   x \in V,\ w_0 \in W_0,\ w \in W.
\]
Then the pairing \eqref{V pairing} on $V$ can be expressed succinctly as sending $x,y$ to $y^\ad x$.  Alternatively, the adjoint $x^\ad$ defined in this way is the same as the adjoint with respect to the $\BQ$-valued forms $\tr_{F/\BQ} \sqrt\Delta\i \sform$ and $\tr_{F/\BQ} \sqrt\Delta\i \sform_0$.

Concretely, upon choosing a basis vector in $W_0$, the hermitian form on $W_0$ is represented by a totally positive element $a \in F_0$, and we obtain an $F$-linear isomorphism $V \simeq W$.  With respect to this isomorphism, the form on $V$ is then given by $a\i \sform$.  In particular, $V$ has the same signature at each archimedean place as $W$, and in the special case that $W_0$ equals $F$ endowed with its norm form, there is a canonical isometry $V \cong W$.  This last case recovers the case taken in the definition of level structures in \cite[\S3.2]{RSZ3} (modulo the sign conventions alluded to previously in footnote \ref{foot}).  But even in this special case, it is often helpful to distinguish between $V$ and $W$, and more generally, it can be desirable to allow other possibilities for $W_0$.
\end{remark}

The following theorem is the analog for $(\wt G, \{h_{\wt G}\})$ of Theorems \ref{thmSp} and \ref{thmGU}.

\begin{theorem}\label{moduliforshim}
The moduli problem $\CF_{K_{\wt G}}(\wt G)$ is representable by a Deligne--Mumford stack  $M_{K_{\wt G}}(\wt G)$ over $\Spec E$,  and   
\[
   M_{K_{\wt G}}(\wt G)(\BC) = S\bigl(\wt G, \{h_{\wt G}\}\bigr)_{K_{\wt G}},
\]
compatible with changing $K_{\wt G}$ of the form \eqref{K_wtG}.
\end{theorem}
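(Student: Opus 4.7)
My plan is to reduce the representability statement to Theorem \ref{thmGU} (for the similitude group $G^\BQ$) and the finite \'etaleness of $M_{0,K_{Z^\BQ}}^\tau$ recalled in Section \ref{ss:Z^Q}, exploiting the product decomposition \eqref{proddec}.  For the identification of complex points, I would pass to rational Betti homology and match the resulting data against the double-coset uniformization of $S(\wt G,\{h_{\wt G}\})_{K_{\wt G}}$, invoking the product decomposition of Shimura data \eqref{reltogross}.

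For representability, consider the forgetful morphism
\[
   \CF_{K_{\wt G}}(\wt G) \to M_{0,K_{Z^\BQ}}^\tau, \quad (A_0,\iota_0,\lambda_0,\ov\eta_0,A,\iota,\lambda,\ov\eta) \mapsto (A_0,\iota_0,\lambda_0,\ov\eta_0).
\]
The target is a finite \'etale Deligne--Mumford stack over $\Spec E_\Phi$, so it suffices to show that this morphism is relatively representable by a Deligne--Mumford stack over $\Spec E$.  Working \'etale-locally on the target, one can trivialize $\wh\RV(A_0)$ as a free rank-one $\BA_{F,f}$-module, inducing a non-canonical similitude $\wh\RV(A_0,A) \cong \wh\RV(A)$ of $\BA_{F,f}$-modules.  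Under this identification, the hermitian isometry $\eta \colon \wh\RV(A_0,A) \isoarrow V \otimes_F \BA_{F,f}$ translates to an $\BA_{F,f}$-linear symplectic similitude $\wh\RV(A) \isoarrow W \otimes_\BQ \BA_f$ of exactly the type appearing in Kottwitz's moduli problem of Theorem \ref{thmGU}, with similitude factor controlled by that of $\eta_0$.  Hence the relative moduli is an open-and-closed substack of a suitable base change of Kottwitz's stack, yielding representability.

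For the complex uniformization, given a point of $\CF_{K_{\wt G}}(\wt G)(\BC)$, I would set $W_0' := \RH_1(A_{0,\BC},\BQ)$ and $W' := \RH_1(A_\BC,\BQ)$ and equip these with their $F$-actions, Riemann forms, and Hodge structures.  The $\Phi$-adapted element $\sqrt\Delta$ converts the Riemann forms into $F/F_0$-hermitian forms, while the Kottwitz condition \eqref{kottcond} and the Rosati condition \eqref{Ros} force the Hodge cocharacters to lie in the conjugacy class $\{h_{\wt G}\}$.  Fixing the class $\tau \in \CR_{W_0,\sqrt\Delta}/\BQ_{>0}$ pins the hermitian space $W_0'$ to the isometry class of $W_0$, and the genus of the induced hermitian space $V' := \Hom_F(W_0',W')$ then matches that of $V$.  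Combined with the adelic trivializations furnished by $\ov\eta_0$ and $\ov\eta$, this produces the desired bijection with the double-coset space describing $S(\wt G,\{h_{\wt G}\})_{K_{\wt G}}$.

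The principal obstacle is to verify that we recover exactly one copy of $S(\wt G,\{h_{\wt G}\})_{K_{\wt G}}$, in contrast to the $\ker^1(\BQ,G^\BQ)$-many copies that appear in Theorem \ref{thmGU} for odd $n$.  Under the decomposition \eqref{proddec}, one has $\ker^1(\BQ,\wt G) \cong \ker^1(\BQ,Z^\BQ) \times \ker^1(\BQ,G)$; the first factor is already parametrized by the index $\tau$ used in \eqref{decompM0isog}, while the second factor vanishes because the Hasse principle for $F/F_0$-hermitian forms (Landherr), combined with Shapiro's lemma, gives $\ker^1(\BQ,G) = 0$.  Fixing $\tau$ therefore picks out a single copy, which is the essential advantage of introducing the auxiliary datum $(A_0,\iota_0,\lambda_0,\ov\eta_0)$ rather than working with $G^\BQ$ alone.
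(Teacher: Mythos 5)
Your proposal follows essentially the same route as the paper's proof, which itself defers to \cite[Prop.~3.7]{RSZ3}: the crux in both is that the Betti-homology hermitian space $\mathrm{V}(A_0,A) = \Hom_F(\RH_1(A_0,\BQ),\RH_1(A,\BQ))$ agrees with $V$ at every finite place (via $\ov\eta$) and at every archimedean place (via the Kottwitz conditions on $A_0$ and $A$), whence by the Hasse principle for $F/F_0$-hermitian spaces one obtains a global isometry $\mathrm{V}(A_0,A) \cong V$, yielding exactly one copy of $S(\wt G,\{h_{\wt G}\})_{K_{\wt G}}$ rather than the $\ker^1$-many copies that plague odd-$n$ in Theorem \ref{thmGU}. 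Your explicit factoring $\ker^1(\BQ,\wt G) \cong \ker^1(\BQ,Z^\BQ) \times \ker^1(\BQ,G)$, with the first factor absorbed by fixing $\tau$ and the second vanishing by Landherr plus Shapiro, is just the cohomological reformulation of that Hasse-principle step and is a helpful gloss on what the paper leaves tacit. Your reduction of representability to Kottwitz's Theorem \ref{thmGU} via the forgetful map to $M_{0,K_{Z^\BQ}}^\tau$, trivializing $\wh\RV(A_0)$ \'etale-locally, is also in the spirit of what is done in \cite{RSZ3}; the one small caveat is that after trivializing one lands in a level structure for the similitude group, so one should spell out that the image of $K_{Z^\BQ}\times K_G$ under $\wt G \to G^\BQ$ is the compact open that appears in Kottwitz's problem and that the resulting groupoid morphisms match, but this is routine.
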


\begin{proof}
This is the extension to the case of arbitrary signature types of \cite[Prop.\ 3.7]{RSZ3}.\footnote{Strictly speaking, the statement and proof in loc.\ cit.\ is for the moduli problem given in Definition \ref{def:tuple} below, in the case of a particular signature type. But the argument transposes to the present situation almost unchanged.} The key point is the following. Define for $(A_0,\iota_0,\lambda_0,\ov\eta_0,A,\iota,\lambda,\ov\eta)$ in  $M_{K_{\wt G}}(\wt G)(\BC)$ a hermitian space ${\rm V}(A_0, A)$ over $F$ in analogy with  $\wh \RV(A_0,A)$, but by using Betti homology groups instead of rational Tate modules. Then $ \wh \RV(A_0,A)={\rm V}(A_0, A)\otimes_F \BA_{F, f}$. By the level structure $\ov\eta$, the two hermitian spaces $V$ and ${\rm V}(A_0, A)$ are isomorphic at all finite places. At an archimedean place corresponding to $\varphi\in\Phi$, by the Kottwitz condition \eqref{kottcond} and the analogous condition of signature $((1,0)_{\varphi \in \Phi})$ for $M_{0,K_{Z^\BQ}}$, the signature of ${\rm V}(A_0, A)$ is $(r_\varphi, r_{\ov\varphi})$. Hence, by the Hasse principle for hermitian spaces, ${\rm V}(A_0, A)$ and $V$ are isomorphic. The choice of an isomorphism $j$ between them allows one to define a map $M_{K_{\wt G}}(\wt G)(\BC) \to S(\wt G, \{h_{\wt G}\})_{K_{\wt G}}$ which one shows to be an isomorphism independent of the choice of $j$.
\end{proof}

\subsection{Variant moduli problems in terms of isomorphism classes}\label{ss:isom variant}
In this section we give some ``isomorphism class'' variants of the moduli problems introduced above.

We begin with the moduli problem for $Z^\BQ$.  For simplicity, we restrict to the case that the level subgroup $K_{Z^\BQ} = K_{Z^\BQ}^\circ \subset {Z^\BQ}(\BA_f)$ is the (unique) maximal compact open subgroup,
\begin{equation}\label{K_Z^BQ}
   K^\circ_{Z^\BQ} := \bigl\{\,z\in (O_F\otimes \wh\BZ)^\times \bigm| \Nm_{F/F_0}(z)\in \wh\BZ^\times \,\bigr\}.
\end{equation}
We define the following category fibered in groupoids $\CF_0$ over $\LNSch_{/E_\Phi}$.

\begin{definition}\label{def:F_0}
The category functor $\CF_0$ associates to each scheme $S$ in $\LNSch_{/E_\Phi}$ the groupoid of triples $(A_0, \iota_0, \lambda_0)$, where 
\begin{altitemize}
\item $A_0$ is an abelian scheme over $S$;
\item $\iota_0\colon O_F\to \End(A_0)$ is an $O_F$-action satisfying the Kottwitz condition \eqref{kottcond} in the case of signature $((1,0)_{\varphi\in\Phi})$ for elements in $O_F$,
\begin{equation}\label{kottcondA_0}
   \charac\bigl(\iota(a)\mid\Lie A_0\bigr) = \prod_{\varphi\in\Phi}\bigl(T-\varphi(a)\bigr)
   \quad\text{for all}\quad
   a\in O_F;
\end{equation}
and
\item $\lambda_0$ is a principal polarization on $A_0$ whose Rosati involution satisfies condition \eqref{Ros} on $O_F$ with respect to $\iota_0$.
\end{altitemize}
A morphism $(A_0,\iota_0,\lambda_0) \to (A_0',\iota_0',\lambda_0')$ in this groupoid is an $O_F$-linear isomorphism of abelian schemes $\mu_0\colon A_0 \isoarrow A_0'$ such that the pullback of $\lambda_0'$ is $\lambda_0$.
\end{definition}

By  the proof of \cite[Prop.\ 3.1.2]{Ho-kr}, $\CF_0$ is representable by a DM stack $M_0$ which is finite and \'etale over $\Spec E_\Phi$.
 
Unfortunately, it may happen that $M_0$ is empty. In order to circumvent this issue, we introduce the following variant of $M_0$, cf.\ \cite[Def.\ 3.1.1]{Ho-kr}. Fix a non-zero ideal $\fka$ of $O_{F_0}$. Then we define the Deligne--Mumford stack $M_0^\fka$  of triples $(A_0, \iota_0, \lambda_0)$ as in Definition \ref{def:F_0}, except that we replace the condition that $\lambda_0$ is principal by the condition that $\lambda_0$ is a polarization satisfying $\ker \lambda_0=A_0[\fka]$. Then, again, $M_0^\fka$ is finite and \'etale over $\Spec E_\Phi$, cf.\ \cite[Prop.\ 3.1.2]{Ho-kr}.  

If $M_0^\fka$ is non-empty, then, like the case of the moduli stack $M_{0,K_{Z^\BQ}^\circ}$ in Section \ref{ss:Z^Q}, its complex fiber is a finite disjoint union of copies of $S(Z^\BQ, \{h_{Z^\BQ}\})_{K_{Z^\BQ}^\circ}$. More precisely, let $\CL_\Phi^\fka$ be the set of isomorphism classes of pairs $(\Lambda_0,\aform_0)$ consisting of a locally free $O_F$-module $\Lambda_0$ of rank one equipped with a nondegenerate alternating form $\aform_0\colon \Lambda_0 \times \Lambda_0 \to \BZ$ such that $\la ax, y \ra_0 = \la x, \ov a y\ra_0$ for all $x,y \in \Lambda_0$ and $a \in O_F$, such that the dual lattice $\Lambda_0^\vee$ of $\Lambda_0$ inside $\Lambda_0 \otimes_\BZ \BQ$ equals $\fka\i \Lambda_0$, and such that $x \mapsto \la \sqrt\Delta x, x \ra_0$ is a positive definite quadratic form on $\Lambda_0$ for some (equivalently, any) $\Phi$-adapted element $\sqrt\Delta \in F$. Then $\CL_\Phi^\fka$ is a finite set, in natural bijection with the isomorphism classes of objects in $M_0^\fka(\BC)$, cf.\ \cite[\S3.2]{RSZ3}.  Given $\Lambda_0,\Lambda_0' \in \CL_\Phi^\fka$ (as is customary, we often suppress the pairings when denoting elements in $\CL_\Phi^\fka$), define $\Lambda_0 \sim \Lambda_0'$ if $\Lambda_0 \otimes_\BZ \wh\BZ$ and $\Lambda_0' \otimes_\BZ \wh\BZ$ are $\wh O_F$-linearly similar up to a factor in $\wh\BZ^\times$ and $\Lambda_0 \otimes_\BZ \BQ$ and $\Lambda_0' \otimes_\BZ \BQ$ are $F$-linearly similar up to a (necessarily positive) factor in $\BQ^\times$. Then
\begin{equation}\label{decompM0C}
   M_0^\fka \otimes_{E_\Phi} \BC\simeq \coprod_{\CL_\Phi^\fka/{\sim}} S\bigl(Z^\BQ, \{h_{Z^\BQ}\}\bigr)_{K_{Z^\BQ}^\circ},
\end{equation}
cf.\ Lem.\ 3.4 and the paragraph following it in \cite{RSZ3}.
Here the index associated to a $\BC$-valued point $(A_0,\iota_0,\lambda_0)$ of $M_0^\fka $ is given by the class in $\CL_\Phi^\fka/{\sim}$ of the $O_F$-module $\RH_1(A_0, \BZ)$ endowed with its natural $\BZ$-valued Riemann form induced by the polarization.  The decomposition on the right-hand side of \eqref{decompM0C} descends to $E_\Phi$, and we accordingly write
\begin{equation}\label{decompM0}
   M_0^\fka = \coprod_{\xi \in \CL_\Phi^\fka/{\sim}} M_0^{\fka,\xi}.
\end{equation}

\begin{remark}
\begin{altenumerate}
\item\label{rem exa satisfied} If $F/F_0$ is ramified at some finite place, then $M_0^\fka$ is non-empty for any $\fka$, cf.\ \cite[pf.\ of Prop.\ 3.1.6]{Ho-kr}.  A special case of this is when $F = K_0 F_0$, where $K_0$ is an imaginary quadratic field and  the discriminants of $K_0/\BQ$ and $F_0/\BQ$ are relatively prime.
\item\label{Ma unram case} If $F/F_0$ is unramified at every finite place and $M^{O_{F_0}}_0 = \emptyset$, then it is easy to deduce from loc.\ cit.\ and class field theory that $M_0^\fkp$ is non-empty for any prime ideal $\fkp \subset O_{F_0}$ which is inert in $F$.  For example, this case arises when $F_0 = \BQ(\sqrt 3)$ and $F = F_0(\sqrt{-1})$.
\item\label{fka rel pr} In particular, given finitely many prime numbers $p_1, \ldots, p_r$, there always exists $\fka$ relatively prime to $p_1,\ldots, p_r$ such that $M_0^\fka$ is non-empty.
\item When $F_0=\BQ$, the set $\CL_\Phi^\fka/{\sim}$ has only one element, so that the decomposition \eqref{decompM0} is trivial.
\end{altenumerate}\label{remMa}
\end{remark}

To directly compare $M_0^\fka$ and $M_{0,K_{Z^\BQ}^\circ}$ (or more precisely, the summands occurring on the respective right-hand sides of \eqref{decompM0isog} and \eqref{decompM0}), let $\fka$ be such that $M_0^\fka \neq \emptyset$, and let $\sqrt\Delta$ be any $\Phi$-adapted element in $F$. Fix a class $\xi \in \CL_\Phi^\fka/{\sim}$.  Let $(\Lambda_0,\aform_0)$ be a representative of $\xi$ in $\CL_\Phi^\fka$, and set $W_0 := \Lambda_0 \otimes_\BZ \BQ$.  Let $\tau$ denote the class of $(W_0, \aform_0\otimes\BQ)$ in $\CR_{W_0,\sqrt\Delta}/\BQ_{>0}$. (Here we are implicitly endowing $W_0$ with the unique $F/F_0$-hermitian form $\sform_0$ such that $\aform_0 \otimes \BQ = \tr_{F/\BQ} \sqrt\Delta\i \sform_0$.)  Then the set $\CR_{W_0,\sqrt\Delta}$ and the class $\tau$ are independent of the choice of representative of $\xi$.  We define an isomorphism
\begin{equation}\label{compisom}
   M_0^{\fka,\xi} \isoarrow M_{0,K_{Z^\BQ}^\circ}^\tau
\end{equation}
as follows.  Let $S$ be a locally noetherian $E_\Phi$-scheme, and let $(A_0,\iota_0,\lambda_0)$ be an $S$-point on $M_0^{\fka,\xi}$.  By the definition of the summands in the decomposition \eqref{decompM0} (see \cite[pf.\ of Lem.\ 3.4]{RSZ3}), at each geometric point $\ov s$ of $S$ there exists an $\wh O_F$-linear symplectic similitude (up to a factor in $\wh \BZ$)
\[
   \wh\RT(A_{0,\ov s}) \isoarrow \Lambda_0 \otimes_\BZ \wh\BZ.
\]
The set of all such similitudes is a $K_{Z^\BQ}^\circ$-orbit, and upon extending scalars to $\BA_f$ they define a level structure $\ov\eta_0$ of similitudes
\[
   \wh\RV(A_0) \isoarrow W_0 \otimes_\BQ \BA_f.
\]
Then the morphism \eqref{compisom} sends $(A_0,\iota_0,\lambda_0) \mapsto (A_0,\iota_0,\lambda_0,\ov\eta_0)$.  This morphism is an isomorphism by an obvious modification of the argument in \cite[Prop.\ 4.4]{KR-U2}, or see \cite[Prop.\ 1.4.3.4]{Lan}.

Keeping $W_0$ fixed, it is not hard to show that every class $\tau' \in \CR_{W_0,\sqrt\Delta}/\BQ_{>0}$ is represented by a space of the form $\Lambda_0' \otimes_\BZ \BQ$ for some $\Lambda_0' \in \CL_\Phi^\fka$. (Since we will make no essential use of this fact later in the paper, we leave the details to the reader.)  Choosing such a $\Lambda_0'$ for each $\tau'$, and taking the $\sim$-class of $\Lambda_0'$, we obtain an injection $\CR_{W_0,\sqrt\Delta}/\BQ_{>0} \inj \CL_\Phi^\fka/{\sim}$.  In this way, combined with the previous paragraph, we may identify $M_{0,K_{Z^\BQ}^\circ}$ with an open and closed substack of $M_0^\fka$.  (Note however that the choice of each $\Lambda_0'$, and hence the embedding $M_{0,K_{Z^\BQ}^\circ} \inj M_0^\fka$, is not canonical.)

We now turn to a couple of variants of the moduli problem attached to $\wt G$ in Definition \ref{def:isogtuple}.  We consider a subgroup $K_{\wt G}$ of the form \eqref{K_wtG*} with $K_{Z^\BQ} = K_{Z^\BQ}^\circ$, so that
\begin{equation}\label{K_wtG}
   K_{\wt G} = K^\circ_{Z^\BQ} \times K_G,
\end{equation}
still with $K_G \subset G(\BA_f)$ an arbitrary open compact subgroup.  Fix $\fka$, $\sqrt\Delta$, $\xi$, and $W_0$ all as before \eqref{compisom}. Set $V := \Hom_F(W_0,W)$, endowed with its natural hermitian form. We define the following category fibered in groupoids $\CF_{K_{\wt G}}'(\wt G)$ over $\LNSch_{/E}$. To lighten notation, we suppress the dependence on the ideal $\fka$ and the element $\xi$.

\begin{definition}\label{def:tuple}
The category functor $\CF_{K_{\wt G}}'(\wt G)$ associates to each scheme $S$ in $\LNSch_{/E}$ the groupoid of tuples $(A_0,\iota_0,\lambda_0,A,\iota,\lambda,\ov\eta)$, where
\begin{altitemize}
\item $(A_0, \iota_0, \lambda_0)$ is an object of $M^{\fka, \xi}_0(S)$; and
\item the tuple $(A,\iota,\lambda,\ov\eta)$ is as in Definition \ref{def:isogtuple}.
\end{altitemize}
A morphism $(A_0,\iota_0,\lambda_0,A,\iota,\lambda,\ov\eta) \to (A'_0,\iota'_0,\lambda'_0,A',\iota',\lambda',\ov\eta')$ in this groupoid is  given by an  isomorphism $\mu_0\colon (A_0, \iota_0, \lambda_0) \isoarrow (A'_0, \iota'_0, \lambda'_0)$ in $M_0^{\fka,\xi}(S)$  and an $F$-linear quasi-isogeny $\mu\colon A\to A'$ pulling $\lambda'$ back to $\lambda$ and $\ov\eta'$ back to $\ov\eta$. 
\end{definition}

The morphism \eqref{compisom} induces a natural comparison morphism of category functors,
\begin{equation}\label{compisom2}
   \begin{tikzcd}[row sep=0ex]
      \CF_{K_{\wt G}}'(\wt G) \ar[r]  &  \CF_{K_{\wt G}}(\wt G)\\
      (A_0,\iota_0,\lambda_0,A,\iota,\lambda,\ov\eta) \ar[r, mapsto]  &  (A_0,\iota_0,\lambda_0,\ov\eta_0,A,\iota,\lambda,\ov\eta).
   \end{tikzcd}
\end{equation}
The fact that \eqref{compisom} is an isomorphism easily implies that \eqref{compisom2} is an isomorphism as well.  In this way, $\CF_{K_{\wt G}}'(\wt G)$ gives another moduli interpretation of the stack $M_{K_{\wt G}}(\wt G)$ over $\Spec E$, cf.\ Theorem \ref{moduliforshim}.

We give a third moduli interpretation of $M_{K_{\wt G}}(\wt G)$, in which \emph{all} of the data is taken up to isomorphism, as follows.  We continue with $\fka$, $\xi$, $\sqrt\Delta$, $W_0$, and $V$ as above.  Let $\Lambda_0 \in \CL_\Phi^\fka$ denote the representative of $\xi$ used to define $W_0$ as before \eqref{compisom}. Fix any $O_F$-lattice $\Lambda \subset W$, and define the $O_F$-lattice
\[
   L := \Hom_{O_F}(\Lambda_0,\Lambda) \subset V.
\]
We again take the subgroup $K_{\wt G} = K^\circ_{Z^\BQ} \times K_G$ of the form \eqref{K_wtG}, and we assume that $L \otimes_{O_F} \wh O_F$ is $K_G$-stable inside $V \otimes_F \BA_{F,f}$ (which is equivalent to $\Lambda \otimes_{O_F} \wh O_F$ being $K_G$-stable inside $W \otimes_F \BA_{F,f}$).  Let $N$ be a positive integer such that the principal congruence subgroup mod $N$ for $L$,
\begin{equation}\label{K^L,N}
   K^{L,N} := \bigl\{\, g \in G(\BA_f) \bigm| (g-1) \cdot L \otimes_{O_F} \wh O_F \subset N L \otimes_{O_F} \wh O_F \,\bigr\},
\end{equation}
is contained in $K_G$.  We define the following moduli problem.  As before we suppress the dependence on $\fka$ and $\xi$ in the notation.

\begin{definition}\label{isom variant}
The category functor $\CF_{K_{\wt G}}^{L,N}(\wt G)$ associates to each scheme $S$ in $\LNSch_{/E}$ the groupoid of tuples $(A_0,\iota_0,\lambda_0,B,\iota,\lambda,\ov\eta_N)$, where
\begin{altitemize}
\item $(A_0, \iota_0, \lambda_0)$ is an object of $M^{\fka, \xi}_0(S)$;
\item $B$ is an abelian scheme over $S$;
\item $\iota\colon O_F\to\End(B)$ is an action of $O_F$ on $B$ satisfying the Kottwitz condition \eqref{kottcond} for all $a \in O_F$; 
\item $\lambda$ is a quasi-polarization on $B$ whose Rosati involution satisfies condition \eqref{Ros} for all $a \in O_F$; and
\item $\ov\eta_N$ is an \'etale closed subscheme
\[
   \ov\eta_N \subset \uIsom_{O_F} \bigl(\uHom_{O_F}(A_0[N],B[N]) , (L/N L)_S\bigr)
\]
over $S$ such that for every geometric point $\ov s \to S$, the fiber $\ov\eta_N(\ov s)$ identifies with a 
$K_G/K^{L,N}$-orbit of isomorphisms
\[
   \eta_N(\ov s) \colon \Hom_{O_F}\bigl(A_0[N](\ov s),B[N](\ov s)\bigr) \isoarrow L/N L
\]
which lift to $\wh O_F$-linear isometries of hermitian modules\footnote{Note that we have made no assumption on the restriction of the hermitian form on $V$ to $L$; all we can say is that this restriction takes values in some fractional ideal $\fkd$ of $F$. Similarly, since $\lambda_0$ need not be principal and $\lambda$ is only required to be a quasi-polarization, the hermitian form on $\wh\RT(A_0,B)$ need not be $\wh O_F$-valued.}
\begin{equation}\label{lift isom}
   \wh\RT(A_0, B)(\ov s) \isoarrow L\otimes_{O_F}\wh O_F.
\end{equation}
\end{altitemize}
Here
\begin{equation}\label{T(A0,B)}
   \wh\RT(A_0,B) := \Hom_{\wh O_F}\bigl(\wh\RT(A_0), \wh\RT(B)\bigr),
\end{equation}
regarded as a smooth $\wh O_F$-sheaf on $S$, and endowed with its natural hermitian form as in \eqref{h_A def}.  Furthermore, the notion of ``lift'' is with respect to the evident reduction-mod-$N$ maps $\wh\RT(A_0, B)(\ov s) \surj \Hom_{O_F}(A_0[N](\ov s),B[N](\ov s))$ and $L\otimes_{O_F}\wh O_F \surj L/NL$.  A morphism
\[
   (A_0,\iota_0,\lambda_0,B,\iota,\lambda,\ov\eta_N) \to (A'_0,\iota'_0,\lambda'_0,B',\iota',\lambda',\ov\eta_N')
\]
in this groupoid is given by an isomorphism $\mu_0\colon (A_0, \iota_0, \lambda_0) \isoarrow (A'_0, \iota'_0, \lambda'_0)$ in $M_0^{\fka,\xi}(S)$ and an $O_F$-linear isomorphism of abelian schemes $\mu\colon B\isoarrow B'$ pulling $\lambda'$ back to $\lambda$ and $\ov\eta_N'$ back to $\ov\eta_N$. 
\end{definition}

\begin{remark}
As usual, the condition on the level structure $\ov\eta_N$ in Definition \ref{isom variant} holds for all geometric points $\ov s \to S$ as soon as it holds for a single geometric point on each connected component of $S$.  The proof is similar to \cite[Lems.\ 1.3.6.5, 1.3.6.6, Cor.\ 1.3.6.7]{Lan}.
\end{remark}

\begin{remark}\label{relpos}
Note that the (quasi-)polarization type of $\lambda$ in Definition \ref{isom variant} is determined by $\Lambda$, in the sense that the existence of the isometries \eqref{lift isom} implies that $\wh\RT(A)$ and $\wh\RT(A)^\vee$ (the dual lattice of $\wh\RT(A)$ inside $\wh\RV(A)$ with respect to $\lambda$ and the Weil pairing) have the same relative position as $\Lambda$ and $\Lambda^\vee$ (the dual lattice of $\Lambda$ inside $W$ with respect to $\tr_{F/\BQ}\sqrt\Delta\i\sform$) have in $W$.  In particular, $\lambda$ is an honest polarization if and only if $\Lambda \subset \Lambda^\vee$.
\end{remark}

\begin{remark}\label{V rem 2}
In the special case that $W_0 = F$ with $(x,y)_0 = \Nm_{F/F_0} (x\ov y)$, take $\fka = \sqrt\Delta\i \fkD$, where $\fkD$ denotes the different of $F/\BQ$, and where $\sqrt\Delta$ is any $\Phi$-adapted element such that $\fka$ is an integral ideal.  Then $\fka$ is the image in $O_F$ of an ideal in $O_{F_0}$, and $O_F^\vee = \fka\i O_F$.  Hence we may take $\xi$ to be the class defined by $\Lambda_0 = O_F$ in the above discussion, and we obtain canonical isometries $V \cong W$ and $L \cong \Lambda$.   Thus in this case, one may formulate Definition \ref{isom variant} purely in terms of $\Lambda$, without needing to introduce $L$.
\end{remark}

The moduli problem $\CF_{K_{\wt G}}^{L,N}(\wt G)$ is related to $\CF_{K_{\wt G}}'(\wt G)$ via a natural comparison equivalence
\begin{equation}\label{comp isom}
\begin{gathered}
   \varphi \colon
   \begin{tikzcd}[baseline=(x.base),row sep=0ex]
      |[alias=x]| \CF_{K_{\wt G}}^{L,N}(\wt G) \ar[r, "\sim"]  &  \CF_{K_{\wt G}}'(\wt G)\\
      (A_0,\iota_0,\lambda_0,B,\iota,\lambda,\ov\eta_N) \ar[r, mapsto]  &  (A_0,\iota_0,\lambda_0,B,\iota,\lambda,\ov\eta),
   \end{tikzcd}
\end{gathered}
\end{equation}
where $\ov\eta$ is the $K_G$-orbit of isometries $\wh\RV(A_0,B) \isoarrow V \otimes_F \BA_{F,f}$ induced by extension of scalars from the lifts \eqref{lift isom} of the sections of $\ov\eta_N$.  (Note that, given any geometric point $\ov s \to S$, stability of the orbit $\ov\eta$ under the action of $\pi_1(S,\ov s)$ follows from finite \'etaleness of $\ov\eta_N$.)  The inverse of $\varphi$ can be explicitly described in a way similar to the proof of \cite[Prop.\ 4.4]{KR-U2}; see also \cite[\S4.12]{DelBour} or \cite[Prop.\ 1.4.3.4]{Lan}.  Let $S$ be a locally noetherian scheme over $\Spec E$, and let $(A_0,\iota_0,\lambda_0,A,\iota,\lambda,\ov\eta)$ be an $S$-valued point on $\CF_{K_{\wt G}}'(\wt G)$.  Since we assume that $L \otimes_{O_F} \wh O_F$ is $K_G$-stable, there exists a unique $\wh O_F$-submodule $T \subset \wh\RV(A)$ such that the submodule $\Hom_{\wh O_F}(\wh\RT(A_0), T) \subset \wh\RV(A_0,A)$ identifies with $L \otimes_{O_F} \wh O_F \subset V \otimes_F \BA_{F,f}$ under each $\eta \in \ov\eta$.  The submodule $T$ gives rise to an abelian scheme $B$ over $S$ with $O_F$-action $\iota_B$ and an $F$-linear quasi-isogeny $\mu\colon B \to A$ such that $\mu_*(\wh\RT(B)) = T$ inside $\wh\RV(A)$.  The pullback of $\lambda$ along $\mu$ defines the quasi-polarization $\lambda_B$, and the reduction mod $N$ of the isometries
\[
   \wh\RT(A_0,B) \xra[\undertilde]{\mu_*} \Hom_{O_F}\bigl(\wh\RT(A_0),T\bigr) \xra[\undertilde]{\eta} L \otimes_{O_F} \wh O_F,
\]
for $\eta \in \ov\eta$, defines the finite \'etale scheme $\ov\eta_N$ (using that $\ov\eta$ is $\pi_1(S,\ov s)$-stable with respect to any geometric point $\ov s \to S$).  Then $(A_0,\iota_0,\lambda_0,B,\iota_B,\lambda_B,\ov\eta_N)$ is the image of $(A_0,\iota_0,\lambda_0,A,\iota,\lambda,\ov\eta)$ under the inverse of $\varphi$.  The equivalence \eqref{comp isom} shows that $\CF_{K_{\wt G}}^{L,N}(\wt G)$ gives a third moduli interpretation of $M_{K_{\wt G}}(\wt G)$, and that, up to canonical equivalence, $\CF_{K_{\wt G}}^{L,N}(\wt G)$ is independent of the choice of $L$ and $N$ such that $K^{L,N} \subset K_G$.

\subsection{GGP and KR cycles}\label{ss:cycles}
To conclude Section \ref{s:rsz}, we give the definition of GGP and KR cycles in the context of the RSZ Shimura varieties.  In the case of the GGP cycles, let $n \geq 2$.  We return to the situation of Remark \ref{GGPsit}, with $u \in W$ a fixed totally positive vector and $W^\flat = (u)^\perp$.  Replacing $W$ by $W^\flat$ in the discussion in Section \ref{ss:tilde G}, we get a Shimura datum $(\wt H,\{h_{\wt H}\})$, where $\wt H$ denotes the analog of the group $\wt G$ for $W^\flat$.  The inclusion $W^\flat \subset W$ then induces a natural inclusion $\wt H \subset \wt G$.

Since $u$ is totally positive, the inclusion $\wt H \subset \wt G$ induces a morphism of the Shimura data we have defined.  Hence there is a morphism of towers
\begin{equation}\label{ggp morph}
   \Bigl(S\bigl(\wt H,\{h_{\wt H}\}\bigr)_{K_{\wt H}}\Bigr) \to \Bigl(S\bigl(\wt G,\{h_{\wt G}\}\bigr)_{K_{\wt G}}\Bigr),
\end{equation}
which lies over the tower $(S(Z^\BQ,\{h_{Z^\BQ}\})_{K_{Z^\BQ}})$.  Taking the graph morphism, we obtain a closed embedding of towers,
\begin{equation}\label{ggp cycle}
   \Bigl(S\bigl(\wt H,\{h_{\wt H}\}\bigr)_{K_{\wt H}}\Bigr) \inj \Bigl(S\bigl(\wt H,\{h_{\wt H}\}\bigr)_{K_{\wt H}}\Bigr) \times_{(S(Z^\BQ,\{h_{Z^\BQ}\})_{K_{Z^\BQ}})} \Bigl(S\bigl(\wt G,\{h_{\wt G}\}\bigr)_{K_{\wt G}}\Bigr).
\end{equation}
The resulting cycle in the target of \eqref{ggp cycle} is the \emph{GGP cycle} in the present context.

In terms of the moduli description of Theorem \ref{moduliforshim}, the above morphisms admit the following simple descriptions.  Using that $u$ is totally positive, it is easy to see that $E$ is also the reflex field of $(\wt H,\{h_{\wt H}\})$.  Take $W_0 := (u)$ in moduli problem of Definition \ref{def:isogtuple} for both groups, so that we consider the spaces $V = \Hom_F(W_0,W)$ and $V^\flat = \Hom_F(W_0,W^\flat)$. Similarly fix the same class $\tau$ in both moduli problems.  Then for level subgroups $K_{\wt G} = K_{Z^\BQ} \times K_G$ and $K_{\wt H} = K_{Z^\BQ} \times K_H$ of the form \eqref{K_wtG*}, and such that $K_H \subset K_G$, the morphism \eqref{ggp morph} is given by the finite and unramified morphism over $E$,
\begin{equation}\label{ggp morph moduli isog}
   \begin{tikzcd}[row sep=0ex]
      M_{K_{\wt H}}(\wt H) \ar[r]  &  M_{K_{\wt G}}(\wt G)\\
      ( A_0,\iota_0,\lambda_0,\ov\eta_0,A^\flat,\iota^\flat,\lambda^\flat,\ov\eta^\flat ) \ar[r, mapsto]  &  ( A_0,\iota_0,\lambda_0,\ov\eta_0,A^\flat \times  A_0,\iota^\flat \times \iota_0,\lambda^\flat \times \lambda_0,\ov{\eta}).
   \end{tikzcd}
\end{equation}
Here the level structure $\ov\eta$ is the $K_G$-orbit of the isometries
\begin{equation}\label{ov eta def}
\begin{split}
   \wh\RV(A_0,A^\flat \times A_0) &= \wh\RV(A_0,A^\flat) \oplus \End_{\BA_{F,f}}\bigl(\wh\RV(A_0)\bigr)   \xra[\undertilde]{\eta^\flat \oplus [\id \shortmapsto \id]}\\
   &\hspace{20ex}\bigl(V^\flat \otimes_F \BA_{F,f}\bigr) \oplus \bigl(\End_F(W_0) \otimes_F  \BA_{F,f}\bigr) = V \otimes_F \BA_{F,f}
\end{split}
\end{equation}
for $\eta^\flat \in \ov\eta^\flat$.  The GGP cycle for the given levels is the graph of \eqref{ggp morph moduli isog},
\begin{equation}\label{ggp cycle isog}
   M_{K_{\wt H}}(\wt H) \inj M_{K_{\wt H}}(\wt H) \times_{M_{0,K_{Z^\BQ}}^\tau} M_{K_{\wt G}}(\wt G).
\end{equation}

The morphisms \eqref{ggp morph moduli isog} and \eqref{ggp cycle isog} admit analogous descriptions in terms of the alternative moduli interpretations of Section \ref{ss:isom variant}.  In the case of the moduli problems $\CF_{K_{\wt H}}'(\wt H)$ and $\CF_{K_{\wt G}}'(\wt G)$ of Definition \ref{def:tuple}, let the level subgroups $K_{\wt G} = K_{Z^\BQ}^\circ \times K_G$ and $K_{\wt H} = K_{Z^\BQ}^\circ \times K_H$ be of the form \eqref{K_wtG}, and again assume that $K_H \subset K_G$.  Fix an ideal $\fka$ such that $M_0^\fka \neq \emptyset$, and again take $W_0 = (u)$ in both moduli problems. The proof of \cite[Prop.\ 3.1.6]{Ho-kr} shows that for an appropriate $\Phi$-adapted $\sqrt\Delta \in F$, there exists an $O_F$-lattice $\Lambda_0 \subset W_0$ such that $\Lambda_0^\vee = \fka\i \Lambda_0$ with respect to the form $\tr_{F/\BQ} \sqrt\Delta \i \sform|_{W_0}$; fix such a $\sqrt\Delta$ and $\Lambda_0$, and let $\xi$ denote the class of $\Lambda_0$ in $\CL_\Phi^\fka/{\sim}$.  Then the morphism \eqref{ggp morph moduli isog} is given by the morphism of moduli problems $\CF_{K_{\wt H}}'(\wt H) \to \CF_{K_{\wt G}}'(\wt G)$ sending
\begin{equation}\label{ggp morph moduli}
   ( A_0,\iota_0,\lambda_0,A^\flat,\iota^\flat,\lambda^\flat,\ov\eta^\flat ) \mapsto( A_0,\iota_0,\lambda_0,A^\flat \times  A_0,\iota^\flat \times \iota_0,\lambda^\flat \times \lambda_0,\ov{\eta}),
\end{equation}
where the level structure $\ov\eta$ is defined exactly as in \eqref{ov eta def}.  The GGP cycle is again the graph of this morphism, as in \eqref{ggp cycle isog} (with the fibered product over $M_0^{\fka,\xi}$).  The descriptions of the morphisms \eqref{ggp morph moduli isog} and \eqref{ggp cycle isog} in terms of the moduli problems $\CF_{K_{\wt H}}^{L,N}(\wt H)$ and $\CF_{K_{\wt G}}^{L,N}(\wt G)$ are completely analogous.

To define KR cycles, we again give differing versions according to our differing moduli interpretations of the RSZ Shimura varieties.  First consider the moduli problem $\CF_{K_{\wt G}}(\wt G)$, for any choice of defining data in Definition \ref{def:isogtuple}.  Let $(A_0,\iota_0,\lambda_0,\ov\eta_0,A,\iota,\lambda,\ov\eta)$ be an $S$-point on $\CF_{K_{\wt G}}(\wt G)$ for a connected, locally noetherian $E$-scheme $S$.  Then $\Hom_F^0(A_0,A)$ is well-defined, and by the multiplier condition in Definition \ref{def:isogtuple}, it carries a natural well-defined $F/F_0$-hermitian form $h'$, in analogy with \eqref{h_A def},
\begin{equation}\label{h'}
   h'(x,y) := \lambda_0\i \circ y^\vee \circ \lambda \circ x \in \End_{O_F}^0(A_0) \cong F, \quad x,y \in \Hom_F^0(A_0,A).
\end{equation}
Note that passing to Tate modules defines an isometric embedding $\Hom_F^0(A_0,A) \inj \wh\RV(A_0,A)$.

Now let $m$ be a positive integer, let $T \in \Herm_m(F)$ be an $m \times m$ hermitian matrix which is positive semidefinite at all archimedean places, and let $L$ be an $O_F$-lattice in the vector space $V$ of Definition \ref{def:isogtuple}.  The \emph{KR cycle} $Z(T,L)$ is the stack of tuples
\begin{equation}\label{KR tuple}
   (A_0,\iota_0,\lambda_0,\ov\eta_0,A,\iota,\lambda,\ov\eta;\mathbf{x}),
\end{equation}
where $(A_0,\iota_0,\lambda_0,\ov\eta_0,A,\iota,\lambda,\ov\eta)$ is an object in $\CF_{K_{\wt G}}(\wt G)$ and $\mathbf{x} = (x_1,\dotsc,x_m) \in \Hom_F^0(A_0,A)^m$ is an $m$-tuple of quasi-homomorphisms such that $(h'(x_i,x_j)) = T$, and such that for each $i=1,\dotsc,m$ and each $\eta\in \ov\eta$, the quasi-homomorphism $x_i$ identifies with an element of $L \otimes_{O_F} \wh O_F$ under the composite
\[
   \Hom_F^0(A_0,A) \inj \wh\RV(A_0,A) \xra[\undertilde]\eta V \otimes_F \BA_{F,f}.
\]
(Note that if $L \otimes_{O_F} \wh O_F$ is stable inside $V \otimes_F \BA_{F,f}$ under the action of the subgroup $K_G$ of \eqref{K_wtG*}, then this last condition is independent of $\eta \in \ov\eta$.)  A morphism $(A_0,\iota_0,\lambda_0,\ov\eta_0,A,\iota,\lambda,\ov\eta;\mathbf{x}) \to (A_0',\iota_0',\lambda_0',\ov\eta_0',A',\iota',\lambda',\ov\eta';\mathbf{x}')$ consists of quasi-isogenies $\mu_0\colon A_0 \to A_0'$ and $\mu\colon A \to A'$ as in Definition \ref{def:isogtuple} which pull $\mathbf{x}'$ back to $\mathbf{x}$.  The proof of \cite[Prop.\ 2.9]{KR-U2} transposes to the present setting to show that $Z(T,L)$ is representable by a DM stack which is finite and unramified over $\CF_{K_{\wt G}}^{L,N}(\wt G) \cong M_{K_{\wt G}}(\wt G)$ (in the present setting, one uses the lattice $L$ in the moduli problem to deduce finiteness). If $T$ is totally positive definite and $1 \leq m \leq \min\{r_\varphi \mid \varphi \in \Phi\}$, then $Z(T,L)$ has codimension $m\sum_{\varphi\in\Phi}r_{\ov\varphi}$ over $M_{K_{\wt G}}(\wt G)$. In particular, in the strict fake Drinfeld case relative to $\Phi$
of Example \ref{eg:special types}\eqref{strictFK}, the codimension is $m$.

In the case of the moduli problem $\CF_{K_{\wt G}}'(\wt G)$ (for any choice of defining data in Definition \ref{def:tuple}), the \emph{KR cycle $Z'(T,L)$}, for $T$ and $L$ as above, is the stack of tuples $(A_0,\iota_0,\lambda_0,A,\iota,\lambda,\ov\eta;\mathbf{x})$, where $(A_0,\iota_0,\lambda_0,A,\iota,\lambda,\ov\eta)$ is an object in $\CF_{K_{\wt G}}'(\wt G)$ and $\mathbf{x}$ is exactly as in \eqref{KR tuple}.  Then the equivalence $\CF_{K_{\wt G}}'(\wt G) \isoarrow \CF_{K_{\wt G}}(\wt G)$ of \eqref{compisom2} induces a natural equivalence $Z'(T,L) \isoarrow Z(T,L)$.

In the case of the moduli problem $\CF_{K_{\wt G}}^{L,N}(\wt G)$ (for any choice of defining data in Definition \ref{isom variant}), let $(A_0,\iota_0,\lambda_0,B,\iota,\lambda,\ov\eta_N)$ be an object in $\CF_{K_{\wt G}}^{L,N}(\wt G)$.  Then the group $\Hom_{O_F}(A_0,B)$ is well-defined (since both $A_0$ and $B$ are taken up to isomorphism as abelian schemes), and this group carries a natural $O_F/O_{F_0}$-hermitian form $h'$, defined as in \eqref{h'}, which takes values in some fractional ideal $\fkd$ of $F$.  In this case, passing to Tate modules defines an isometric embedding $\Hom_{O_F}(A_0,B) \inj \wh\RT(A_0,B)$. (Therefore, by the existence of a level structure in the moduli problem, if the hermitian form on $V$ is $O_F$-valued on $L$, then $h'$ will be $O_F$-valued too.)  The \emph{KR cycle $Z^{L,N}(T)$}, for $T$ as above, is the stack of tuples $(A_0,\iota_0,\lambda_0,B,\iota,\lambda,\ov\eta_N;\mathbf{x})$, where $(A_0,\iota_0,\lambda_0,B,\iota,\lambda,\ov\eta_N)$ is an object in $\CF_{K_{\wt G}}^{L,N}(\wt G)$ and $\mathbf{x} = (x_1,\dotsc,x_m) \in \Hom_{O_F}(A_0,B)^m$ is an $m$-tuple of homomorphisms such that $(h'(x_i,x_j)) = T$.  In this case, the equivalence $\CF_{K_{\wt G}}^{L,N}(\wt G) \isoarrow \CF_{K_{\wt G}}'(\wt G)$ of \eqref{comp isom} induces a natural equivalence $Z^{L,N}(T) \isoarrow Z'(T,L)$.

\section{$p$-integral models of RSZ Shimura varieties}\label{s:semiglob}
\subsection{Semi-global models}\label{ss:semiglob}
In this subsection we define a ``semi-global'' integral model of the stack $M_{K_{\wt G}}(\wt G)$ over $\Spec O_{E,(p)}$ for certain level subgroups $K_{\wt G}$.

We first need some preparatory notions.  Let $k$ be any algebraically closed field which is an $O_E$-algebra.  Let $(A_0,\iota_0,\lambda_0)$ and $(A,\iota,\lambda)$ be two triples, each consisting of an abelian variety over $k$, an $F$-action up to isogeny, and a quasi-polarization whose Rosati involution induces the nontrivial $F/F_0$-automorphism on $F$.  Suppose that $(A_0,\iota_0)$ satisfies the Kottwitz condition \eqref{kottcondA_0}, and that $(A,\iota)$ satisfies the Kottwitz condition \eqref{kottcond} relative to a fixed choice of a generalized CM type $r$ of rank $n$; in particular, this implies that $A_0$ and $A$ have respective dimensions $[F_0:\BQ]$ and $n\cdot [F_0 : \BQ]$. Let $v$ be a finite place of $F_0$ which does not split in $F$. Then \cite[App.\ A]{RSZ3} defines a sign invariant $\inv_v^r(A_0,\iota_0,\lambda_0, A,\iota,\lambda)\in \{\pm 1\}$.\footnote{Since we take the Kottwitz condition \eqref{kottcondA_0} for $A_0$ to be the opposite of the one used in loc.\ cit.,\ we need to use the version of $\inv_v^r$ modified as in \cite[Rem.\ A.2]{RSZ3} when the residue characteristic of $v$ equals $\charac k$.}  If the residue characteristic of $v$ does not equal $\charac k$, then $\inv_v^r$ is simply the Hasse invariant of the $F_v/F_{0,v}$-hermitian space
\begin{equation}\label{V_v(A_0,A)}
   \RV_v(A_0,A) := \Hom_{F_v}\bigl(\RV_v(A_0), \RV_v(A)\bigr),
\end{equation}
where the hermitian form is the obvious $v$-adic analog of \eqref{h_A def} (and hence $\RV_v(A_0,A)$ is the $v$-factor of \eqref{V(A_0,A)} when $\charac k = 0$). If  the residue characteristic of $v$ equals $\charac k$, then $\inv_v^r$ is defined similarly in terms of the highest exterior power of the Hom space of the rational Dieudonn\'e modules of $A_0$ and $A$, with a further correction factor in terms of the function $r$. The sign invariant depends only on the tuple $(A_0, \iota_0, \lambda_0; A, \iota, \lambda)$ up to isogeny, and it is locally constant in families over $O_E$-schemes \cite[Prop.\ A.1]{RSZ3}. 

We next note that the definition of the moduli space $M^\fka_0$ over $\Spec E$ of Section \ref{ss:isom variant} extends word-for-word to a moduli space $\CM^\fka_0$  over $\Spec O_E$. Then $\CM^\fka_0$ is a Deligne--Mumford stack, finite and \'etale over $\Spec O_E$, cf.\ \cite[Prop.\ 3.1.2]{Ho-kr}. It follows that the decomposition \eqref{decompM0} extends to a disjoint union decomposition of $\CM^\fka_0$,
\begin{equation}\label{intdecompshim}
  \CM_0^\fka=\coprod_{\xi \in \CL_\Phi^\fka/{\sim}}\CM_0^{\fka, \xi}.
\end{equation}

For the rest of this section we fix a prime number $p$. We denote by $\CV_p$ the set of places of $F_0$ over $p$. If $p = 2$, then we assume that every $v \in \CV_p$ is unramified in $F$.  We fix $\fka$, $\sqrt\Delta$, $\xi$, $\Lambda_0$, and $W_0$ as before \eqref{compisom}.  We continue with the $n$-dimensional hermitian space $W$, and as usual we set $V = \Hom_F(W_0,W)$.  For each $v \in \CV_p$, we endow the $F_v/F_{0,v}$-hermitian space $W_v := W \otimes_F F_v$ with the $\BQ_p$-valued alternating form $\tr_{F_v/\BQ_p} \sqrt\Delta\i\sform$, 
and we fix a \emph{vertex lattice} $\Lambda_v \subset W_v$ with respect to this form, i.e., $\Lambda_v$ is an $O_{F,v}$-lattice such that
\begin{equation*}
   \Lambda_v\subset \Lambda_v^\vee \subset\pi_v\i\Lambda_v .
\end{equation*}
Here $\pi_v$ denotes a uniformizer in $F_v$ (if $v$ splits in $F$, this means the image in $F_v$ of a uniformizer for $F_{0,v}$), and $\Lambda_v^\vee \subset W_v$ denotes the dual lattice with respect to $\tr_{F_v/\BQ_p} \sqrt\Delta\i\sform$.\footnote{We remind the reader that $\Lambda_v^\vee$ and the dual lattice $\Lambda_v^*$ with respect to the hermitian form on $W_v$ need not be equal, but they are at least scalar multiples of each other. The lattice $\Lambda_v^\vee$ is more natural to use in connection with polarizations. We also point out that vertex lattices in \cite{RSZ3} are always taken with respect to \emph{hermitian} forms.}

We consider a subgroup $K_{\wt G} = K^\circ_{Z^\BQ} \times K_G$ as in \eqref{K_wtG}. We assume that $K_G \subset G(\BA_f)$ is of the form $K_G = K_G^p \times K_{G, p}$, where $K_G^p\subset G(\BA_f^p)$ is arbitrary and where
\[
   K_{G, p}=\prod_{v\in \CV_p}K_{G, v}\subset G(\BQ_p) = \prod_{v \in \CV_p} \U(W)(F_{0,v}),
\]
with 
\begin{equation}\label{K_G,p decomp}
   K_{G, v} :=  {\rm Stab}_{\U(W)(F_{0, v})}(\Lambda_v) . 
\end{equation}
We note that if $v$ is unramified in $F$, then $K_{G,v}$ is a maximal parahoric subgroup of $\U(W)(F_{0, v})$.  If $v$ ramifies in $F$ (recall that in this case we assume that $v \nmid 2$), then $K_{G,v}$ is a maximal compact subgroup of $\U(W)(F_{0, v})$ which contains a (maximal) parahoric subgroup with index $2$, unless $n$ is even and $\Lambda_v$ is $\pi_v$-modular, in which case $K_{G,v}$ is itself maximal parahoric; see \cite[\S4.a]{PR-TLG}.  Here \emph{$\pi_v$-modular} means that $\Lambda_v^\vee = \pi_v\i\Lambda_v$.

We now define the following category fibered in groupoids $\CF^{\naive}_{K_{\wt G}}(\wt G)$  over $\LNSch_{/O_{E, (p)}}$. As before, to lighten  notation, we suppress the ideal $\fka$ and the element $\xi$.

\begin{definition}\label{def:inttuple}
The category functor $\CF^\naive_{K_{\wt G}}(\wt G)$ associates to each scheme $S$ in $\LNSch_{/O_{E,(p)}}$ the groupoid of tuples 
$(A_0,\iota_0,\lambda_0,A,\iota,\lambda,\ov\eta^p)$, where
\begin{altitemize}
\item $(A_0,\iota_0,\lambda_0)$ is an object of $\CM^{\fka, \xi}_0(S)$;
\item $A$ is an abelian scheme over $S$;
\item $\iota\colon O_{F,(p)} \to \End_{(p)}(A)$ is an action up to prime-to-$p$ isogeny satisfying the Kottwitz condition \eqref{kottcond} on $O_{F,(p)}$;
\item $\lambda \in \Hom_{(p)}(A,A^\vee)$ is a quasi-polarization on $A$ whose Rosati involution satisfies condition \eqref{Ros} on $O_{F,(p)}$; and
\item $\ov\eta^p$ is a $K_G^p$-orbit of isometries of $\BA_{F,f}^p/\BA_{F_0,f}^p$-hermitian modules
\begin{equation}\label{levelprimetop}
   \eta^p\colon \wh \RV^p(A_0,A) \isoarrow V \otimes_F \BA_{F,f}^p,
\end{equation}
where
\begin{equation}
   \wh \RV^p(A_0,A) := \Hom_{\BA_{F,f}^p}\bigl(\wh \RV^p(A_0), \wh \RV^p(A)\bigr),
\end{equation}
and where the hermitian form on $\wh \RV^p(A_0,A)$ is the obvious prime-to-$p$ analog of \eqref{h_A def}.
\end{altitemize}
We impose the following further conditions on the above tuples.
\begin{altenumerate}
\item\label{sglob cond i} Consider the decomposition of $p$-divisible groups
\begin{equation}\label{decofpdivgp}
   A[p^\infty] =  \prod_{v \in \CV_p} A[v^\infty]
\end{equation}
induced by the action of $O_{F_0}\otimes\BZ_p \cong \prod_{v \in \CV_p} O_{F_0,v}$.
Since $\Ros_\lambda$ is trivial on $O_{F_0}$, $\lambda$ induces a polarization $\lambda_v \colon A[v^\infty] \to A^\vee[v^\infty] \cong A[v^\infty]^\vee$ of $p$-divisible groups for each $v$. The condition we impose is that $\ker\lambda_v$ is contained in $A[\iota(\pi_v)]$ of rank $\#(\Lambda_v^\vee/ \Lambda_v)$ for each $v \in \CV_p$.
\item\label{sglob cond ii} We require that at every geometric point $\ov s$ of $S$ the following \emph{sign condition} holds for every non-split place $v \in \CV_p$,
\begin{equation}\label{condsign}
   \inv^r_v(A_{0, \ov s},\iota_{0, \ov s},\lambda_{0, \ov s},A_{\ov s},\iota_{\ov s},\lambda_{\ov s})=\inv_v(V),
\end{equation}
where the right-hand denotes the Hasse invariant of the hermitian space $V$ at $v$.
\end{altenumerate}
A morphism  $(A_0, \iota_0, \lambda_0, A, \iota, \lambda, \ov\eta^p) \to (A_0', \iota_0', \lambda_0', A', \iota', \lambda', \ov\eta'^p)$ in this groupoid is given by an isomorphism $\mu_0\colon (A_0,\iota_0,\lambda_0) \isoarrow (A_0',\iota_0',\lambda_0')$ in $\CM^{\fka,\xi}_0(S)$ and an $O_{F,(p)}$-linear quasi-isogeny $\mu\colon A \to A'$ inducing an isomorphism $A[p^\infty] \isoarrow A'[p^\infty]$, pulling $\lambda'$ back to $\lambda$, and pulling $\ov\eta^{p\prime}$ back to $\ov\eta^p$.
\end{definition}

\begin{remark}\label{excone}
Let $\mathbf{A} = (A_{0, \ov s},\iota_{0, \ov s},\lambda_{0, \ov s},A_{\ov s},\iota_{\ov s},\lambda_{\ov s},  \ov\eta^p)$ be a tuple as in Definition \ref{def:inttuple}, except where we don't impose the sign condition in \eqref{sglob cond ii}, and suppose that $\mathbf{A}$ lifts to  characteristic zero.  Then, by the product formula and the Hasse principle for hermitian vector spaces, by the Kottwitz condition and the existence of the prime-to-$p$ level structure $\ov\eta^p$, and by local constancy of $\inv_v^r$, the sign condition for all non-split $v \in \CV_p$ except one implies  the sign condition for $\mathbf{A}$ at the remaining place. In particular, when $F_0=\BQ$, the sign condition  is empty, provided that $\CF^\naive_{K_{\wt G}}(\wt G)$ is topologically flat over $O_{E, (p)}$.   We refer to Remark \ref{redsign}\eqref{inert sign} for other instances when the sign condition is redundant. In these instances  the \emph{naive} moduli problem considered here is replaced  by more sophisticated integral models  which are flat over $O_{E, (p)}$. 
\end{remark}

\begin{remark}
As in Remarks \ref{V rem} and \ref{V rem 2}, when $(W_0,\sform_0) = (F,\Nm_{F/F_0})$, we may replace $V$ by $W$ everywhere in Definition \ref{def:inttuple}.
\end{remark}

The following theorem shows that the moduli functor $\CF^\naive_{K_{\wt G}}(\wt G)$ defines an extension of $M_{K_{\wt G}}(\wt G)$ with reasonable properties. 
\begin{theorem}\label{intmoduliforshim}
The moduli problem $\CF^\naive_{K_{\wt G}}(\wt G)$ is representable by a Deligne--Mumford stack $\CM^\naive_{K_{\wt G}}(\wt G)$ over $\Spec O_{E,(p)}$,  and   
$$
   \CM^\naive_{K_{\wt G}}(\wt G)\times_{\Spec O_{E, (p)}}\Spec E \cong M_{K_{\wt G}}(\wt G)  .
$$
Furthermore,
\begin{altenumerate}
\item\label{intmoduliforshim i} If $M_{K_{\wt G}}(\wt G)$ is proper over $\Spec E$, then $\CM^\naive_{K_{\wt G}}(\wt G)$ is proper over $\Spec O_{E, (p)}$. 
\item\label{intmoduliforshim ii} If $p$ is unramified in $F$ and the vertex lattice $\Lambda_v$ is self-dual for all $v\in \CV_p$, then 
$\CM^\naive_{K_{\wt G}}(\wt G)$ is smooth over $\Spec O_{E, (p)}$. 
\end{altenumerate}
\end{theorem}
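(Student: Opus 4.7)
The plan is to establish the four assertions in turn, extending the argument of \cite[Thm.\ 3.9]{RSZ3} from the strict fake Drinfeld case to arbitrary signature $r$, and using the already-proven generic-fiber result (Theorem \ref{moduliforshim}) together with the comparison isomorphism between $\CF_{K_{\wt G}}'(\wt G)$ and $\CF_{K_{\wt G}}(\wt G)$ of \eqref{compisom2}.

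For representability, I would analyze the forgetful morphism $\CF^\naive_{K_{\wt G}}(\wt G) \to \CM^{\fka,\xi}_0$. The source of this morphism (without the sign condition) parametrizes, over each point of $\CM_0^{\fka,\xi}$, an $O_{F,(p)}$-linear abelian scheme with a quasi-polarization of prescribed degree at each $v \in \CV_p$ (via the kernel condition on $\lambda_v$), plus a prime-to-$p$ level structure of type $K_G^p$. This is a PEL-type moduli problem in the sense of Kottwitz, and standard arguments (Mumford's construction, rigidity of prime-to-$p$ level structures) yield representability by a DM stack over $\Spec O_{E,(p)}$. Since $\inv_v^r$ is locally constant in families and only finitely many values are possible at each non-split $v \in \CV_p$, the sign condition \eqref{condsign} cuts out an open and closed substack. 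Combined with finite étaleness of $\CM_0^{\fka,\xi}$ over $\Spec O_E$, this produces the stack $\CM^\naive_{K_{\wt G}}(\wt G)$.

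For the generic fiber identification, I would show that $\CF^\naive_{K_{\wt G}}(\wt G) \otimes_{O_{E,(p)}} E$ is equivalent to $\CF_{K_{\wt G}}'(\wt G)$ of Definition \ref{def:tuple}. Over a $\BQ$-scheme one can upgrade $\ov\eta^p$ to a full $K_G$-orbit $\ov\eta$ as follows: at each $v \in \CV_p$ the polarization-type condition pins down the $K_{G,v}$-orbit inside $W_v$ containing the $v$-component of the rational Tate module (using that $\Lambda_v$ is determined up to $K_{G,v}$-action by its relative position with its dual), while the existence of the desired isometry relies on matching Hasse invariants. The invariants at archimedean places are prescribed by the Kottwitz condition \eqref{kottcond}, the invariants at finite places away from $p$ by $\ov\eta^p$, and the invariants at $v \in \CV_p$ precisely by the sign condition \eqref{condsign}; the Hasse principle then produces the global isometry with $V$ (cf.\ Remark \ref{excone}).

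Properness in (i) follows from the valuative criterion for DM stacks. Given a complete DVR $R$ over $O_{E,(p)}$ with fraction field $K$ and a $K$-valued point of $\CM^\naive_{K_{\wt G}}(\wt G)$, properness of $M_{K_{\wt G}}(\wt G)$ and $\CM_0^{\fka,\xi}$ together with the Néron mapping property extends $(A_0,\iota_0,\lambda_0,A,\iota,\lambda)$ over $\Spec R$; the polarization-type condition at $v$ extends by flatness of the kernel of $\lambda_v$, the étale level structure $\ov\eta^p$ extends uniquely, and the sign condition holds on the special fiber by local constancy. For smoothness in (ii), I would invoke the local model diagram established for unramified PEL type A data (including $p = 2$) in Appendix \ref{s:LM}. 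Smoothness reduces étale-locally to smoothness of the local model at each $v \in \CV_p$; in the self-dual, unramified case this local model parametrizes Hodge-type filtrations of $\Lambda_v \otimes_{O_{F_0,v}} \CO_S$ of the ranks dictated by $r$, and is a product of ordinary Grassmannians over $O_{E,(p)}$, hence smooth. The main technical obstacle is the generic-fiber step: one must verify carefully that the data at $p$ (polarization-type condition plus sign condition) together with $\ov\eta^p$ really recovers the full level structure of Definition \ref{def:tuple} up to $K_G$-equivalence, which ultimately rests on the Hasse principle for hermitian forms over $F$.
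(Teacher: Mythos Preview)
Your proposal is correct and follows essentially the same route as the paper's proof: reduce the generic-fiber identification to completing $\ov\eta^p$ to a full $K_G$-orbit $\ov\eta$ at the places over $p$, use the sign condition plus the Hasse principle to match hermitian spaces, pin down the $K_{G,v}$-orbit via the lattice type, invoke the local model diagram of Appendix \ref{s:LM} for smoothness, and appeal to Kottwitz's argument for properness.  Two small remarks: the relevant reference in \cite{RSZ3} is Theorem 4.1 rather than 3.9, and your assertion that ``$\Lambda_v$ is determined up to $K_{G,v}$-action by its relative position with its dual'' is exactly the step where, for $p=2$ with $v$ inert (not just split), one needs the input of Jacobowitz \cite[Th.\ 7.1]{Jac} on transitivity of the unitary group on vertex lattices of a given type over unramified quadratic extensions---the paper makes this explicit, and you should too.
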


\begin{proof} 
This is the extension of \cite[Th.~4.1]{RSZ3} to the case of arbitrary signature type.  The key point is the  statement which compares $M_{K_{\wt G}}(\wt G)$ with the generic fiber of $\CM^\naive_{K_{\wt G}}(\wt G)$.  Using the moduli interpretation $\CF_{K_{\wt G}}'(\wt G)$ of $M_{K_{\wt G}}(\wt G)$ of Definition \ref{def:tuple}, this comes down to completing the prime-to-$p$ level structure $\ov\eta^p$ as in \eqref{levelprimetop} to a level structure $\ov\eta$ as in \eqref{eta}, for a point $(A_0,\iota_0,\lambda_0,A,\iota,\lambda,\ov\eta^p)$ of $\CM^\naive_{K_{\wt G}}(\wt G)$ over an $E$-scheme. Indeed, by the sign condition  \eqref{condsign}, there exists an isomorphism between $\RV_v(A_0,A) = \wh \RV(A_0,A)_v$ and $V_v$ for all $v\in\CV_p$.  Now, recall that prior to defining $\CF^\naive_{K_{\wt G}}(\wt G)$, we fixed a lattice $\Lambda_0 \subset W_0$ such that $\Lambda_0^\vee = \fka\i\Lambda_0$. The $K_{G, v}$-equivalence class $\ov\eta_v$ of the isomorphism $\RV_v(A_0,A) \simeq V_v$ is then singled out by stipulating that it takes the lattice $\Hom_{O_{F,v}}(\RT_v(A_0), \RT_v(A))$ in $\RV_v(A_0,A)$ to the lattice $L_v := \Hom_{O_{F,v}}(\Lambda_{0,v},\Lambda_v)$ in $V_v$, where $\Lambda_{0,v} := \Lambda \otimes_{O_F} O_{F,v}$; note that $K_{G,v}$ is the stabilizer of $L_v$ with respect to the isomorphism $\U(W)(F_{0,v}) \cong \U(V)(F_{0,v})$.  We remark that in loc.\ cit.,\ this part of the argument is carried out when $p = 2$ under the assumption that all $v \in \CV_2$ are split in $F$.  The argument extends to the case that all $v \in \CV_2$ are unramified in $F$ by \cite[Th.\ 7.1]{Jac}, which says that all vertex lattices of the same type are conjugate under the unitary group in any hermitian space attached to an unramified extension of local fields of characteristic not $2$.  We also remark that the smoothness assertion in \eqref{intmoduliforshim ii} follows as in the proof of \cite[Th.~4.1]{RSZ3}, using Theorem \ref{normal forms} in the appendix below to extend the formalism of local models to the case $p = 2$ under the assumption that all $v \in \CV_2$ are unramified in $F$.

The properness assertion in \eqref{intmoduliforshim i} follows as in  \cite[end of \S 5]{K-points}.
\end{proof}

\begin{remark}\label{r:deeper level}
The lattice stabilizer groups $K_{G,p}$ appearing as $p$-factors of the level subgroups in Theorem \ref{intmoduliforshim} are all maximal. It is possible to extend the definitions above to general \emph{lattice multichain} stabilizer groups at $p$ (the $v$-factors of which, for each $v \in \CV_p$, continue to contain a parahoric subgroup with index $1$ or $2$, as after \eqref{K_G,p decomp}), by replacing the entry $A$ in $(A_0,\iota_0,\lambda_0,A,\iota,\lambda,\ov\eta^p)$ by a \emph{multichain} of abelian varieties, cf.\ \cite[Def.\ 6.9]{RZ1} (in the context of general PEL moduli problems) or \cite[\S 1.4]{PR3} (in the context of unitary moduli problems with $F_0 = \BQ$).
\end{remark}

\subsection{Semi-global GGP and KR cycles}
In this section we give semi-global versions of the GGP and KR cycles of Section \ref{ss:cycles}.

Let us again start with the GGP cycles.  As before, we take $n \geq 2$, we fix a vector $u \in W$ of totally positive norm, we set $W^\flat = (u)^\perp$, and we consider the resulting Shimura datum $(\wt H, \{h_{\wt H}\})$ for $W^\flat$.  We may then define a semi-global integral model $\CM_{K_{\wt H}}^\naive (\wt H)$ over $\Spec O_{E,(p)}$ as in Section \ref{ss:semiglob}.  The definition of this stack depends on the choice of a vertex lattice $\Lambda_v^\flat \subset W_v^\flat$ for each $v \in \CV_p$; to define a semi-global version $\CM_{K_{\wt H}}^\naive (\wt H) \to \CM_{K_{\wt G}}^\naive (\wt G)$ of the morphism \eqref{ggp morph moduli}, these lattices and the lattices $\Lambda_v \subset W_v$ in the definition of $\CM_{K_{\wt G}}^\naive (\wt G)$ need to be suitably related. As in Section \ref{ss:cycles}, we set $W_0 = (u)$, $V = \Hom_F(W_0,W)$, and $V^\flat = \Hom_F(W_0,W^\flat)$.  By Remark \ref{remMa}\eqref{rem exa satisfied}\eqref{Ma unram case}, the stack $\CM_0^\fka$ is non-empty for $\fka$ equal to $O_{F_0}$ or to an inert prime ideal; we fix such an $\fka$, and we further fix $\sqrt\Delta$, $\Lambda_0$, and $\xi$ as before \eqref{ggp morph moduli}. Then the localization $\Lambda_{0,v} = \Lambda_0 \otimes_{O_F} O_{F,v}$ is a vertex lattice in $W_{0,v}$ for every place $v \in \CV_p$. For simplicity, let us now assume that $\Lambda_v$ and $\Lambda_v^\flat$ satisfy the relation
\begin{equation}\label{latt relation}
   \Lambda_v = \Lambda_v^\flat \oplus \Lambda_{0,v} \subset W_v = W_v^\flat \oplus W_{0,v}
\end{equation}
for each $v \in \CV_p$.  We further assume that the prime-to-$p$ level subgroups satisfy $K_H^p \subset K_G^p$.  Then the morphism \eqref{ggp morph moduli} extends to a morphism of $p$-integral models,
\begin{equation}\label{semiglob morph}
   \begin{tikzcd}[row sep=0ex]
      \CM_{K_{\wt H}}^\naive(\wt H) \ar[r]  &  \CM_{K_{\wt G}}^\naive(\wt G)\\
      \bigl( A_0,\iota_0,\lambda_0,A^\flat,\iota^\flat,\lambda^\flat,\ov\eta^{p,\flat} \bigr) \ar[r, mapsto]  &  \bigl( A_0,\iota_0,\lambda_0,A^\flat \times  A_0,\iota^\flat \times \iota_0,\lambda^\flat \times \lambda_0,\ov\eta^p\bigr).
   \end{tikzcd}
\end{equation}
Here the $K_G^p$-orbit $\ov\eta^p$ is defined in terms of the obvious prime-to-$p$ analog of \eqref{ov eta def}, and it is easy to see that the sign condition on $\CM_{K_{\wt H}}^\naive(\wt H)$ implies that the formula in \eqref{semiglob morph} indeed produces points satisfying the sign condition on $\CM_{K_{\wt G}}^\naive(\wt G)$.  The \emph{$p$-integral GGP cycle} for the given levels is the graph of \eqref{semiglob morph},
\begin{equation}
   \CM_{K_{\wt H}}^\naive(\wt H) \inj \CM_{K_{\wt H}}^\naive(\wt H) \times_{\CM_0^{\fka,\xi}} \CM_{K_{\wt G}}^\naive(\wt G).
\end{equation}

\begin{remark}
Our assumption that the lattices in question satisfy the relation \eqref{latt relation} for all $v \in \CV_p$ is, in certain cases, a serious one.  For example, if $n$ is even and there is a place $v \in \CV_p$ which ramifies in $F$, then it is impossible to choose $\Lambda_v$ and $\Lambda_v^\flat$ in this way such that $\CM_{K_{\wt G}}(\wt G)$ has good reduction (at least outside of zero dimensional cases).  We refer to \cite[\S4.4]{RSZ3} for more general definitions of GGP cycles in this and further such contexts.
\end{remark}

Now let us define the semi-global KR cycles.  In fact, we will give two versions of the definition.  The first is based directly on the moduli problem $\CF_{K_{\wt G}}^\naive(\wt G)$, for any choice of defining data in Definition \ref{def:inttuple}.  Fix a global $O_F$-lattice $\Lambda \subset W$ whose localization $\Lambda \otimes_{O_F} O_{F,v}$, for each $v \in \CV_p$, equals the vertex lattice $\Lambda_v \subset W_v$ we fixed before Definition \ref{def:inttuple}.  Set $L := \Hom_{O_F}(\Lambda_0,\Lambda) \subset V$.  Let $m$ be a positive integer, and let $T \in \Herm_m(F)$ be a hermitian matrix which is positive semidefinite at all archimedean places. Then the \emph{$p$-integral KR cycle $\CZ^\naive(T,L)$} is the stack of tuples $(A_0,\iota_0,\lambda_0,A,\iota,\lambda,\ov\eta^p;\mathbf{x})$, where $(A_0,\iota_0,\lambda_0,A,\iota,\lambda,\ov\eta^p)$ is an object in $\CF_{K_{\wt G}}^\naive(\wt G)$ and $\mathbf{x} = (x_1,\dotsc,x_m) \in \Hom_{(p),O_{F,(p)}}(A_0,A)^m$ is an $m$-tuple of $O_{F,(p)}$-linear quasi-homomorphisms such that $(h'(x_i,x_j)) = T$ and such that each $x_i$ identifies with an element of $L \otimes_{O_F} \wh O_F^p$ under the composite
\[
   \Hom_{(p),O_{F,(p)}}(A_0,A) \inj \wh\RV^p(A_0,A) \xra[\undertilde]{\eta^p} V \otimes_F \BA_{F,f}^p,
\]
for each $\eta^p \in \ov\eta^p$.  Here the hermitian form $h'$ on $\Hom_{(p),O_{F,(p)}}(A_0,A)$ is defined as in \eqref{h'}, and $\wh O_F^p := (O_F[\frac 1 p])\sphat\phantom{|} \subset \BA_{F,f}^p$.  The proof of Theorem \ref{intmoduliforshim} extends to show that the generic fiber of $\CZ^\naive(T,L)$ is canonically isomorphic to the KR cycle $Z'(T,L)$ defined in Section \ref{ss:cycles}.

To give the second version of the $p$-integral KR cycle, we first need to introduce a $p$-integral version of Definition \ref{isom variant}.  Keep all the notation of the previous paragraph, and assume that $L \otimes_{O_F} \wh O_F^p$ is $K_G^p$-stable inside $V \otimes_F \BA_{F,f}^p$ (which is equivalent to $\Lambda \otimes_{O_F} \wh O_F^p$ being $K_G^p$-stable inside $W \otimes_F \BA_{F,f}^p$). For $N$ a positive integer prime to $p$, define $K^{p,L,N} \subset G(\BA_{F,f}^p)$ as the obvious prime-to-$p$ analog of $K^{L,N}$ in \eqref{K^L,N}.  Then $K^{L,N} = K^{p,L,N} \times K_{G,p}$. Choose $N$ such that $K^{p,L,N} \subset K_G^p$.  We define the following moduli problem.

\begin{definition}
The category functor  $\CF_{K_{\wt G}}^{\naive,L,N}(\wt G)$ associates to each scheme $S$ in $\LNSch_{/O_{E,(p)}}$ the groupoid of tuples
$(A_0,\iota_0,\lambda_0,B,\iota,\lambda,\ov\eta_N^p)$, where
\begin{altitemize}
\item $(A_0,\iota_0,\lambda_0)$ is an object of $\CM^{\fka, \xi}_0(S)$;
\item $B$ is an abelian scheme over $S$;
\item $\iota\colon O_F \to \End(B)$ is an action of $O_F$ on $B$ satisfying the Kottwitz condition \eqref{kottcond} on $O_F$;
\item $\lambda \in \Hom_{(p)}(B,B^\vee)$ is a quasi-polarization on $B$ whose Rosati involution satisfies condition \eqref{Ros} on $O_F$; and
\item $\ov\eta_N^p$ is a closed \'etale subscheme
\[
   \ov\eta_N^p \subset \uIsom_{O_F} \bigl(\uHom_{O_F}(A_0[N],B[N]) , (L/N L)_S\bigr)
\]
over $S$ such that for every geometric point $\ov s \to S$ (or equivalently, for a single geometric point on each connected component of $S$), the fiber $\ov\eta_N^p(\ov s)$ identifies with a $K_G^p/K^{p,L,N}$-orbit of isomorphisms
\[
   \eta_N^p(\ov s) \colon \Hom_{O_F}\bigl(A_0[N](\ov s),B[N](\ov s)\bigr) \isoarrow L/N L
\]
which lift to $\wh O_F^p$-linear isometries of hermitian modules
\[
   \wh\RT^p(A_0, B)(\ov s) \isoarrow L\otimes_{O_F}\wh O_F^p.
\]
Here
\begin{equation}
   \wh\RT^p(A_0,B) := \Hom_{\wh O_F^p}\bigl(\wh\RT^p(A_0), \wh\RT^p(B)\bigr)
\end{equation}
is the obvious prime-to-$p$ analog of \eqref{T(A0,B)}.
\end{altitemize}
We require that the tuples $(A_0,\iota_0,\lambda_0,B,\iota,\lambda,\ov\eta_N^p)$ satisfy conditions \eqref{sglob cond i} and \eqref{sglob cond ii} from Definition \ref{def:inttuple} (with $B$ in place of $A$). A morphism $(A_0,\iota_0,\lambda_0,B,\iota,\lambda,\ov\eta_N^p) \to (A'_0,\iota'_0,\lambda'_0,B',\iota',\lambda',\ov\eta_N^{p\prime})$ in this groupoid is given by an isomorphism $\mu_0\colon (A_0, \iota_0, \lambda_0) \isoarrow (A'_0, \iota'_0, \lambda'_0)$ in $\CM_0^{\fka,\xi}(S)$ and an $O_F$-linear isomorphism of abelian schemes $\mu\colon B\isoarrow B'$ pulling $\lambda'$ back to $\lambda$ and $\ov\eta_N^{p\prime}$ back to $\ov\eta_N^p$. 
\end{definition}

The obvious prime-to-$p$ analog of the morphism \eqref{comp isom} defines a natural equivalence of moduli problems
\[
   \CF_{K_{\wt G}}^{\naive,L,N}(\wt G) \isoarrow \CF_{K_{\wt G}}^\naive(\wt G).
\]
Hence $\CF_{K_{\wt G}}^{\naive,L,N}(\wt G)$ gives a second moduli interpretation of the stack $\CM_{K_{\wt G}}^\naive (\wt G)$.

In terms of the moduli functor $\CF_{K_{\wt G}}^{\naive,L,N}(\wt G)$, we now define the \emph{$p$-integral KR cycle $\CZ^{L,N}(T)$} word-for-word as in the case of $Z^{L,N}(T)$ in Section \ref{ss:cycles}, simply replacing $\CF_{K_{\wt G}}^{L,N}(\wt G)$ everywhere by $\CF_{K_{\wt G}}^{\naive,L,N}(\wt G)$.  Then $\CZ^{L,N}(T)$ is canonically equivalent to $\CZ(T,L)$, and its generic fiber canonically identifies with $Z^{L,N}(T)$.

\subsection{Summary table}\label{sumtable}
The following table summarizes some properties of the various unitary Shimura varieties we have introduced above.  In the last column, by ``cycle property'' we mean whether there exists  a KR cycle in the Shimura variety and a GGP cycle in an appropriate product of the Shimura varieties, in analogy with the discussion in Section \ref{ss:cycles}. In the last row, by ``BHKRY'' we mean the special case of RSZ Shimura varieties where $F_0=\BQ$, and of (strict fake) Drinfeld type, cf.\ \cite{BHKRY}. (In loc.~cit.\  only the case of principal polarization is considered.) In this case,  $F=K_0$ is an imaginary quadratic field. The term ``no sign necessary'' refers to the case when $\CF_{K_{\wt G}}^\naive(\wt G)$ is topologically flat over $O_{K_0, (p)}$, cf.\ Remark \ref{excone}. 

\begin{footnotesize}
\begin{center}
	\begin{tabular}{|c|c|c|c|c|c|}
		\hline
		Name  &  Shimura datum  &  Reflex field  &  Moduli problem  &  
         \begin{varwidth}{\linewidth}
            \centering
            $p$-integral\smash[b]{\strut}\\
            moduli problem\smash[t]{\strut}
         \end{varwidth}  
         &  
         \begin{varwidth}{\linewidth}
            \centering
            Cycle\smash[b]{\strut}\\
            property\smash[t]{\strut}
         \end{varwidth}   \\
		\hline
		D/K (\S\ref{ss:GU})  &  $(G^\BQ, \{h_{G^\BQ}\})$  &  $E_r$  &  
         \begin{varwidth}{\linewidth}
            \centering
   	      \emph{yes} for $n$ even,\smash[b]{\strut}\\ 
	         \emph{almost} for $n$ odd\smash[t]{\strut}
         \end{varwidth}
           &  
         \begin{varwidth}{\linewidth}
            \centering
            \emph{yes} if $p$ totally\\
            unramified
         \end{varwidth}
           &  \emph{no}\\
		\hline
		GGP  (\S\ref{ss:U})  &  $(G, \{h_G\})\vphantom{\Bigm|}$  &  $E_{r^\natural}$  &  \emph{no}  &  \emph{no}  &  \emph{yes}\\
		\hline 
		RSZ  (\S\ref{ss:tilde G})  &  $(\wt G, \{h_{\wt G}\})\vphantom{\Bigm|}$  &  $E = E_\Phi E_r = E_\Phi E_{r^\natural}$  &  \emph{yes}  &
         \begin{varwidth}{\linewidth}
            \centering
            \emph{yes} for $K_{G,p}$ a lattice\smash[b]{\strut}\\
            multichain stabilizer\smash[t]{\strut}
         \end{varwidth}
           &  \emph{yes}\\
		\hline
      \begin{varwidth}{\linewidth}
         \centering
         HT\\
         ( Ex.\ \ref{eg:special types}\eqref{eg:HT})
      \end{varwidth}
         &  $(G^\BQ, \{h^\HT_{G^\BQ}\})$\strut  &  
         \begin{varwidth}{\linewidth}
            \centering
            Usually $F=K_0F_0$;\\
            see Ex.\ \ref{eg:special types}\eqref{eg:HT}
         \end{varwidth}
           &  as in D/K  &
         \begin{varwidth}{\linewidth}
            \centering
   	      as in D/K, all\smash[b]{\strut}\\ 
	         levels if $p$ split in $K_0$\smash[t]{\strut}
         \end{varwidth}
           &  as in D/K\\
		\hline
		BHKRY  &  as in RSZ  &  $K_0$   &  as in RSZ  &
         \begin{varwidth}{\linewidth}
            \centering
   	      as in RSZ,\smash[b]{\strut}\\ 
	         but no sign necessary\smash[t]{\strut}
         \end{varwidth}
           &  as in RSZ\\
		\hline
	\end{tabular} 
\end{center}
\end{footnotesize}

\section{Flat and smooth $p$-integral models of RSZ Shimura varieties}\label{s:flatsm}
The $p$-integral model $\CM^\naive_{K_{\wt G}}(\wt G)$ of $M_{K_{\wt G}}(\wt G)$ defined in Section \ref{s:semiglob} is  not always flat over $\Spec O_{E, (p)}$. In this section, we first give some cases where it is known to be flat. We then give some cases where, upon imposing further conditions on the Lie algebra of the abelian variety $(A, \iota, \lambda)$ in the moduli problem, we obtain a closed substack $\CM_{K_{\wt G}}(\wt G)$ of $\CM^\naive_{K_{\wt G}}(\wt G)$ which is flat, with the same generic fiber. Finally, we give some cases, beyond the totally unramified case appearing in Theorem \ref{intmoduliforshim}\eqref{intmoduliforshim ii}, where $\CM_{K_{\wt G}}(\wt G)$ is even regular or smooth. We continue to assume that every $v \in \CV_p$ is unramified in $F$ if $p = 2$, and we again take $K_{\wt G}$ as in Section \ref{ss:semiglob}, i.e.,\ of the form $K^\circ_{Z^\BQ} \times K_G^p \times K_{G,p}$, with $K_G^p$ arbitrary and $K_{G,p}$ a product of stabilizers of vertex lattices $\Lambda_v$ for $v \in \CV_p$.

Let us note at the outset that when using terminology relating a lattice to its dual in this section, we will always mean the dual with respect to the $\BQ_p$-valued form, e.g., $\Lambda_v$ being self-dual means that $\Lambda_v = \Lambda_v^\vee$.  Strictly speaking, this usage differs from all the papers on unitary local models we will refer to (where the dual is with respect to the hermitian form).  But since $\Lambda_v^\vee$ and $\Lambda_v^*$ are scalar multiples of each other, the periodic lattice chains generated by $\{\Lambda_v, \Lambda_v^\vee\}$ and $\{\Lambda_v,\Lambda_v^*\}$ are the same, and there is ultimately no essential difference.

\subsection{Flatness of $\CM^\naive_{K_{\wt G}}(\wt G)$}\label{ss:naive flat}
The following result gives some cases where $\CM^\naive_{K_{\wt G}}(\wt G)$ is known to be flat.

\begin{theorem}\label{naive flat thm}
Suppose that $p$ is unramified in $F$ (without any condition on the vertex lattices $\Lambda_v$), or that the following three conditions hold:
\begin{altenumerate}
\renewcommand{\theenumi}{\emph{\arabic{enumi}}}
\item\label{inert split cond} each place $v \in \CV_p$ which is unramified in $F$ has ramification index $e \leq 2$ over $p$;\footnote{ This hypothesis can now be removed; see Remark \ref{naive flatness rem}\eqref{naive flatness rem ramification} and footnote \ref{mwy} below.}
\item each place $v \in \CV_p$ which ramifies in $F$ is unramified over $p$, and the lattice $\Lambda_v$ for such $v$ is self-dual; 
\item\label{r_varphi cond} the integers $r_\varphi$ for varying $\varphi \in \Hom_\BQ(F,\ov\BQ)$ differ by at most one.
\end{altenumerate} 
Then $\CM^\naive_{K_{\wt G}}(\wt G)$ is flat over $\Spec O_{E,(p)}$.
\end{theorem}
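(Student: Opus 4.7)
The plan is to prove flatness by the standard reduction to a local model via the local model diagram. By Theorem \ref{normal forms} from the appendix together with the general formalism of \cite{RZ1}, the stack $\CM^\naive_{K_{\wt G}}(\wt G)$ admits an \'etale-local presentation by an explicit local model $\CM^\loc$, and flatness of $\CM^\naive_{K_{\wt G}}(\wt G)$ over $\Spec O_{E,(p)}$ is equivalent to flatness of $\CM^\loc$ over the completion of $O_{E,(p)}$ at the relevant place. Moreover $\CM^\loc$ decomposes as a product over the places $v \in \CV_p$,
\[
  \CM^\loc \;\cong\; \prod_{v \in \CV_p}\, \CM^\loc_v,
\]
where $\CM^\loc_v$ depends only on the local hermitian datum $(W_v, \Lambda_v)$ and on the local signatures $(r_\varphi)_{\varphi \mid v}$. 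It therefore suffices to prove that each $\CM^\loc_v$ is flat over its natural base.

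In the first case of the theorem, where $p$ is unramified in $F$, every $v \in \CV_p$ is unramified in $F$. If $v$ splits in $F$, then $\CM^\loc_v$ is a product of classical Grassmannians over an unramified extension of $\BZ_p$, hence smooth. If $v$ is inert, then $\CM^\loc_v$ is the local model for an unramified unitary group at the (possibly non-hyperspecial) parahoric level attached to $\Lambda_v$; flatness in this case is G\"ortz's theorem, proved by stratifying the special fiber by affine Schubert cells in the partial affine flag variety for the local unitary group and invoking their known Cohen--Macaulayness. This settles the first case.

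For the second case, we argue place by place. Split $v$ and $v$ inert in $F$ are handled as above; condition \emph{(1)}, which allows the absolute ramification index $e(v/p) \le 2$, poses no additional difficulty, since the G\"ortz argument goes through over any discrete valuation ring. The essential case is a place $v$ ramified in $F/F_0$: by \emph{(2)}, $v$ is unramified over $p$ and $\Lambda_v$ is self-dual, while by \emph{(3)} the local signatures $(r_\varphi)_{\varphi\mid v}$ differ pairwise by at most one. One then invokes the analysis of ramified unitary local models with self-dual lattice, under which the near-balanced signature condition \emph{(3)} forces the naive local model to coincide with its flat closure.

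The main obstacle is exactly this last case. In general the naive ramified unitary local model is not flat, and extra conditions of ``wedge'' or ``spin'' type are needed to cut it down to a flat model (this is in fact the source of the further conditions imposed in the rest of Section \ref{s:flatsm}). What makes condition \emph{(3)} effective is that, for nearly balanced signatures, these exterior-power-type conditions are imposed in degrees that exceed the available ranks of the universal morphisms, so they hold automatically on the naive model. Verifying this requires writing down $\CM^\loc_v$ explicitly via its linear-algebra presentation, checking component by component that the defining equations yield a scheme of the expected relative dimension over $O_{E,(p)}$, and combining this with topological flatness (which follows from Kottwitz--Rapoport admissibility) and Cohen--Macaulayness to conclude flatness.
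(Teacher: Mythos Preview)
Your overall strategy---reduce to the local model via the local model diagram and decompose place by place---is exactly what the paper does. However, there are two genuine gaps in your treatment of the second alternative.

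First, for $v \in \CV_p$ unramified in $F$ but with $e(v/p) = 2$, the claim that ``the G\"ortz argument goes through over any discrete valuation ring'' misses the point. After the decomposition by embeddings $\psi_0 \colon F_{0,v}^t \to \ov\BQ_p$, the relevant factor is not a local model for $\GL_n$ or an unramified unitary group, but for the Weil restriction $\Res_{F_w/F_w^t}\GL_n$ where $F_w/F_w^t$ is a nontrivial totally ramified extension. G\"ortz's results \cite{G1,G2} do not cover this. The paper invokes instead Pappas--Rapoport \cite{PR1} (Theorem~B, which in turn relies on Weyman \cite{W}) for the single-lattice case, and then G\"ortz \cite{G2} to pass to general lattice chains. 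It is precisely here that both condition~(\ref{inert split cond}) on $e$ and condition~(\ref{r_varphi cond}) on the $r_\varphi$ enter: they are exactly the hypotheses under which \cite{PR1} establishes flatness of the naive $\Res\GL_n$ local model. Your sketch gives no account of why either condition is relevant in the unramified-in-$F$ case.

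Second, for $v$ ramified in $F$, your explanation that the wedge conditions ``are imposed in degrees that exceed the available ranks'' is not correct. For signature $(r_{\varphi_\psi}, r_{\ov\varphi_\psi})$ with $|r_{\varphi_\psi} - r_{\ov\varphi_\psi}| \le 1$, the wedge condition asks that $\bigwedge^{r_{\ov\varphi_\psi}+1}(\iota(a)-\varphi_\psi(a))$ vanish on a module of rank $n = r_{\varphi_\psi} + r_{\ov\varphi_\psi}$; since $r_{\ov\varphi_\psi}+1 \le n$ in every non-banal case, there is no automatic vanishing for rank reasons. Over the special fiber the Kottwitz condition only forces $\iota(\pi)$ to be nilpotent, not to have small rank, so it does not imply the wedge relations. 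The flatness of the \emph{naive} model in this situation (self-dual lattice, $v$ unramified over $p$, signatures differing by at most one) is a genuine theorem, which the paper cites from \cite{S2}; your proposed mechanism does not establish it.
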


In fact, Theorem \ref{naive flat thm} is a consequence of the following more precise statement, which however requires some more notation to set up. Let $\nu$ be a place of $E$ over $p$, and choose an embedding $\alpha \colon \ov\BQ \to \ov \BQ_p$ inducing $\nu$.  Then $\alpha$ induces an identification
\begin{equation}\label{alpha}
\begin{gathered}
   \alpha_* \colon
   \begin{tikzcd}[baseline=(x.base),row sep=0ex]
      |[alias=x]| \Hom(F,\ov\BQ) \ar[r, "\sim"]  &  \Hom(F,\ov\BQ_p)\\
      \varphi \ar[r, mapsto]  &  \alpha \circ \varphi.
   \end{tikzcd}
\end{gathered}
\end{equation}
For each $p$-adic place $w$ of $F$, let
\begin{equation}
   \Hom_w(F,\ov\BQ) := \bigl\{\, \varphi \in \Hom(F,\ov\BQ) \bigm| \text{$\alpha \circ \varphi$ induces $w$} \,\bigr\}.
\end{equation}
Then, under the identification $\alpha_*$, the sets $\Hom_w(F,\ov\BQ)$ are the $\Gal(\ov\BQ_p/\BQ_p)$-orbits in $\Hom(F,\ov\BQ)$, and hence are independent of the choice of $\alpha$ inducing $\nu$.  For each $w$, let $F_w^t$ denote the maximal unramified extension of $\BQ_p$ in $F_w$.  For each $\psi \in \Hom_{\BQ_p}(F_w^t,\ov\BQ_p)$, let $\Hom_{w,\psi}(F,\ov\BQ) \subset \Hom_w(F,\ov\BQ)$ denote the fiber over $\psi$ of the composite
\begin{equation}\label{nogoodname}
   \Hom_w(F,\ov\BQ) \xra[\undertilde]{\alpha_*} \Hom_{\BQ_p}(F_w,\ov\BQ_p) \xra{\text{restrict}} \Hom_{\BQ_p}(F_w^t,\ov\BQ_p).  
\end{equation}
Then, under the identification $\alpha_*$, the sets $\Hom_{w,\psi}(F,\ov\BQ)$ are the $I_p$-orbits in $\Hom(F,\ov\BQ)$, where $I_p \subset \Gal(\ov\BQ_p/\BQ_p)$ denotes the inertia subgroup.  The label $\psi$ in $\Hom_{w,\psi}(F,\ov\BQ)$ therefore generally depends on the choice of $\alpha$ inducing $\nu$, but the partition
\begin{equation}\label{Hom(F,ovBQ) decomp}
   \Hom(F,\ov\BQ) = \coprod_{\substack{w \mid p\\ \psi\colon\! F_w^t \rightarrow \ov\BQ_p}} \Hom_{w,\psi}(F,\ov\BQ)
\end{equation}
depends only on $\nu$ up to labeling.  We now have the following result on flatness of the base change $\CM_{K_{\wt G}}^\naive(\wt G)_{O_{E,(\nu)}}$.

\begin{theorem}\label{naive flat O_E,nu}
Suppose that the following three conditions hold.
\begin{altenumerate}
\renewcommand{\theenumi}{\emph{\arabic{enumi}}}
\item\label{unram hypoth} For each $v \in \CV_p$ which is unramified in $F$, the ramification index $e$ of $v$ over $p$ satisfies $e \leq 2$ or, for each of the one or two places $w$ of $F$ over $v$ and each $\psi \in \Hom_{\BQ_p}(F_w^t,\ov\BQ_p)$,\footnote{ This hypothesis can now be removed; see Remark \ref{naive flatness rem}\eqref{naive flatness rem ramification} and footnote \ref{mwy} below.}
\[
   e \geq \min \Biggl\{\sum_{\varphi \in \Hom_{w,\psi}(F,\ov\BQ)} r_\varphi, \sum_{\varphi \in \Hom_{w,\psi}(F,\ov\BQ)} r_{\ov\varphi} \Biggr\}.
\]
\item\label{ram hypoth} For each place $v \in \CV_p$ which ramifies in $F$, $v$ is unramified over $p$ and the lattice $\Lambda_v$ is self-dual.
\item\label{diff hypoth} For each place $w$ of $F$ over $p$ and each $\psi \in \Hom_{\BQ_p}(F_w^t,\ov\BQ_p)$, the integers $r_\varphi$ for varying $\varphi \in \Hom_{w,\psi}(F,\ov\BQ)$ differ by at most one.
\end{altenumerate}
Then $\CM_{K_{\wt G}}^\naive(\wt G)_{O_{E,(\nu)}}$ is flat over $\Spec O_{E,(\nu)}$.
\end{theorem}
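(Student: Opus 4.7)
The plan is to reduce flatness of $\CM^\naive_{K_{\wt G}}(\wt G)_{O_{E,(\nu)}}$ over $O_{E,(\nu)}$ to the analogous question for a local model via the local model diagram, and then to verify the local statement factor by factor on a product decomposition of the local model.

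First I would invoke the local model diagram. The moduli problem $\CF^\naive_{K_{\wt G}}(\wt G)$ is of PEL type $A$ with level $K_{G,p}$ a product of vertex lattice stabilizers, so by \cite{RZ1} there is a local model $M^\loc$ over $O_{E,(\nu)}$ and a diagram of smooth morphisms linking it to $\CM^\naive_{K_{\wt G}}(\wt G)_{O_{E,(\nu)}}$. For $p=2$ this appeal requires that the PEL data be unramified at $p$ in the sense of Appendix \ref{s:LM}, which is guaranteed by hypothesis \eqref{ram hypoth}; the conclusion is then furnished by Theorem \ref{normal forms}. Since smooth morphisms are faithfully flat, flatness of the stack is equivalent to flatness of $M^\loc$.

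Next, using the product decomposition $O_{F_0}\otimes\BZ_p = \prod_{v\in\CV_p}O_{F_0,v}$ and, for each $v$, the splitting of the maximal unramified subring of $O_F\otimes_{O_{F_0}}O_{F_0,v}$ along embeddings $\psi$ of $F_w^t$, one obtains a product decomposition
\[
   M^\loc_{O_{E,(\nu)}} = \prod_{(v,w,\psi)} M^\loc_{v,w,\psi}
\]
indexed by the partition \eqref{Hom(F,ovBQ) decomp}. Each factor depends only on the local data $(F_w, \Lambda_v, r|_{\Hom_{w,\psi}(F,\ov\BQ)})$. Since a product of flat schemes is flat, we reduce to flatness of each factor. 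Two cases are immediate: if $v$ splits in $F$ the factor is a (product of) Grassmannian(s), hence smooth; if $v$ ramifies in $F$ then by \eqref{ram hypoth} the factor is an unramified-over-$p$, ramified-in-$F$ unitary local model with self-dual $\Lambda_v$, which is the classical Pappas--Rapoport case known to be flat (indeed smooth).

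The substantive case is when $v$ is inert in $F$ with possibly positive ramification $e$ over $p$. Here hypothesis \eqref{diff hypoth} constrains the $r_\varphi$ within each $\psi$-orbit to differ by at most one, while \eqref{unram hypoth} provides the dichotomy $e\le 2$ or $e$ large relative to the signature. In the small-$e$ regime the Kottwitz determinant condition leaves no room for non-flat components, which one verifies by an elementary dimension count in affine coordinates on $M^\loc_{v,w,\psi}$. In the large-$e$ regime one must verify that the Pappas--Rapoport wedge condition is automatic, i.e.\ no spurious irreducible components appear in the special fiber, by comparing the dimensions of the strata cut out by the filtration data against the expected generic fiber dimension. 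This comparison, which exploits both \eqref{unram hypoth} and \eqref{diff hypoth} in an essential way, is the main technical obstacle; the rest of the argument is formal.
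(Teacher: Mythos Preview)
Your overall architecture---local model diagram plus product decomposition of the naive local model along places $v$ and embeddings $\psi$---matches the paper exactly. But your case analysis contains genuine errors and gaps.

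First, the split case is not immediate. When $v$ splits in $F$ but has ramification index $e>1$ over $p$, the factor $M^\loc_{v,w,\psi}$ is \emph{not} a Grassmannian; it is a naive local model for $\Res_{F_w/F_w^t}\GL_n$ with $F_w/F_w^t$ totally ramified of degree $e$, exactly as in the inert case. The paper treats split and inert uniformly under ``$v$ unramified in $F$'' and cites \cite[Th.~B]{PR1} for the self-dual single-lattice case (this is where hypotheses \eqref{unram hypoth} and \eqref{diff hypoth} enter, and the result relies on Weyman's work \cite{W}, not on an elementary dimension count). For a general vertex lattice $\Lambda_v$ in the unramified-in-$F$ case, the paper further invokes G\"ortz \cite{G2} to reduce to the single-lattice case; you have not addressed non-self-dual $\Lambda_v$ at all.

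Second, the ramified-in-$F$ case is not the ``classical Pappas--Rapoport case'' and is not smooth: for self-dual $\Lambda_v$ and $v$ unramified over $p$, the Pappas local model \cite{P} already has singularities in signature $(n-1,1)$. Flatness under hypotheses \eqref{ram hypoth} and \eqref{diff hypoth} is the content of \cite{S2}, which is what the paper cites.

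So the reduction step is fine, but the substantive flatness inputs at each factor are misidentified or missing; the correct references are \cite{PR1}, \cite{G2}, and \cite{S2}.
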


\begin{proof}  
After extending scalars to the $\nu$-adic completion $O_{E,(\nu)} \to O_{E,\nu}$, this follows from the \emph{local model diagram} over $O_{E,\nu}$, 
\begin{equation}\label{LMDfortildeG}
   \begin{tikzcd}[column sep=2ex]
        &  \wt{\CM}^\naive_{K_{\wt G}}(\wt G)_{O_{E,\nu}}  \ar[dl, "\pi"'] \ar[dr, "\wt\varphi"]\\
      \CM^\naive_{K_{\wt G}}(\wt G)_{O_{E,\nu}} & &  (\CM_0^\fka)_{O_{E,\nu}} \times_{O_{E,\nu}}\BM^\naive.
   \end{tikzcd}
\end{equation}
Let us briefly remark on the notation; see \cite[Ch.~6]{RZ1} or \cite[\S15]{PR2} for more details.  Let
\[
   \Lambda_p := \bigoplus_{v \in \CV_p}\Lambda_v \subset W \otimes_\BQ \BQ_p = \bigoplus_{v \in \CV_p}W_v.
\]
Let $\CL$ be the self-dual periodic multichain of $O_{F,p}$-lattices in $W \otimes_\BQ \BQ_p$ generated by $\Lambda_p$ and its dual.  A triple $(A,\iota,\lambda)$ as in the moduli problem for $\CM^\naive_{K_{\wt G}}(\wt G)$ then gives rise in a natural way to a polarized $\CL$-set of abelian varieties $\{A_\Lambda\}_{\Lambda \in \CL}$. For $S$ in $\LNSch_{/O_{E,\nu}}$, $\wt{\CM}^\naive_{K_{\wt G}}(\wt G)_{O_{E,\nu}}(S)$ is then the groupoid of objects $(A_0,\iota_0,\lambda_0,A,\iota,\lambda,\ov\eta)$ in $\CM^\naive_{K_{\wt G}}(\wt G)_{O_{E,\nu}}(S)$ equipped with an isomorphism of polarized multichains $\{\RH_1^{\text{dR}}(A_\Lambda)\}_{\Lambda \in \CL} \isoarrow \CL \otimes_{\BZ_p} \CO_S$.  The morphism $\pi$ in \eqref{LMDfortildeG} is the natural forgetful morphism; it is a torsor under $\CP_{O_{E, \nu}}$, where $\CP$ is the automorphism scheme of $\CL$ (as a polarized multichain) over $\BZ_p$, which is a smooth affine group scheme (in fact, a $\BZ_p$-model of the unitary similitude group $G^\BQ$ for $W$).  The \emph{naive local model} $\BM^\naive$ is a projective $O_{E,\nu}$-scheme attached to the multichain $\CL$ and the Shimura datum $(G^\BQ,\{h_{G^\BQ}\})$.  The group scheme $\CP_{O_{E,\nu}}$ acts naturally on $\BM^\naive$, and the morphism $\wt\varphi$ is $\CP_{O_{E,\nu}}$-equivariant and formally smooth of the same relative dimension as $\pi$.

The flatness of $\CM^\naive_{K_{\wt G}}(\wt G)_{O_{E,\nu}}$ now follows from the flatness of $\BM^\naive$ (and \'etaleness of $\CM_0^\fka$).  More precisely, by its definition, $\BM^\naive$ is a moduli space for certain $O_F \otimes_\BZ \CO_S$-linear quotient bundles of $\Lambda_p \otimes_{\BZ_p} \CO_S$.  The decomposition $O_{F_0} \otimes_\BZ \BZ_p \cong \prod_{v \in \CV_p} O_{F_0,v}$ then induces a natural decomposition 
\begin{equation}\label{decLMv}
   \BM^\naive \cong \prod_{v \in \CV_p} \BM(v)^\naive_{O_{E,\nu}},
\end{equation}
where each $\BM(v)^\naive_{O_{E,\nu}}$ is the base change to $O_{E,\nu}$ of a naive local model attached to the local $F_v/F_{0,v}$-hermitian space $W_v$, the lattice $\Lambda_v$, and the function $r|_{\cup_{w \mid v}\Hom_w(F,\ov\BQ)}$. Thus flatness of $\BM^\naive$ follows from flatness of each $\BM(v)^\naive$.  Now let $F_{0,v}^t$ denote the maximal unramified extension of $\BQ_p$ in $F_{0,v}$, and let $E(v)^\un \subset \ov\BQ_p$ denote the maximal unramified extension of the reflex field $E(v)$ of $\BM(v)^\naive$.  Consider the decomposition
\[
   O_{F_{0,v}^t} \otimes_{\BZ_p} O_{E(v)^\un} \cong \prod_{\psi_0 \in \Hom_{\BQ_p}(F_{0,v}^t, \ov\BQ_p)} O_{E(v)^\un}.
\]
After extending scalars to $O_{E(v)^\un}$, the action of $O_{F_{0,v}^t} \otimes_{\BZ_p} O_{E(v)^\un}$ on $\Lambda_v \otimes_{\BZ_p} O_{E(v)^\un}$ induces a natural decomposition
\begin{equation}\label{decLMpsi_0}
   \BM(v)_{O_{E(v)^\un}}^\naive \cong \prod_{\psi_0} \BM(v,\psi_0)^\naive_{O_{E(v)^\un}},
\end{equation}
where each $\BM(v,\psi_0)^\naive_{O_{E(v)^\un}}$ is the base change to $O_{E(v)^\un}$ of a naive local model attached to the tower $F_v/F_{0,v}/F_{0,v}^t$.  Thus the problem of flatness of $\BM^\naive$ reduces to flatness of each $\BM(v,\psi_0)^\naive_{O_{E(v)^\un}}$.

When $v$ is unramified in $F$, for each $\psi_0$, there is a further decomposition
\[
   O_{F_v^t} \otimes_{O_{F_{0,v}^t},\psi_0} O_{E(v)^\un} \cong \prod_\psi O_{E(v)^\un},
\]
where the product is over the two homomorphisms $\psi\colon F_v^t \to \ov\BQ_p$ extending $\psi_0$.  Picking one of these $\psi$, this decomposition induces an identification of $\BM(v,\psi_0)^\naive_{O_{E(v)^\un}}$ with the base change to $O_{E(v)^\un}$ of a naive local model $\BM'$ attached to the totally ramified extension $F_w/F_w^t$, the group $\Res_{F_w/F_w^t} \GL_n$, and the function $r|_{\Hom_{w,\psi}(F,\ov\BQ)}$; here $w$ is the place of $F$ over $v$ determined by $\psi$ (of course there is only ambiguity in $w$ when $v$ splits in $F$), and $\Hom_{w,\psi}(F,\ov\BQ)$ identifies with the embeddings of $F_w$ into $\ov\BQ_p$ extending $\psi$ as in \eqref{nogoodname} above.  (Replacing the choice of $\psi$ by $\ov\psi$ results in an isomorphic naive local model for $\Res_{F_{\ov w}/F_{\ov w}^t} \GL_n$.)  When $\Lambda_v$ is self-dual, $\BM'$ is the local model defined in \cite{PR1} in the case of a single lattice; by the hypotheses in \eqref{unram hypoth} and \eqref{diff hypoth}, $\BM'$ is flat by Th.~B and the following paragraph in loc.\ cit.\ (which we note relies, in turn, on a result of Weyman \cite{W}).  For general $\Lambda_v$ (still with $v$ unramified in $F$), $\BM'$ is a naive local model for $\Res_{F_w/F_w^t} \GL_n$ and $r|_{\Hom_{w,\psi}(F,\ov\BQ)}$ in the case of a periodic lattice chain $\CL_\psi$ generated by one or two lattices.  By G\"ortz \cite[\S1 Th.]{G2}, flatness of $\BM'$ in this case (or more generally, in the case of an arbitrary periodic lattice chain $\CL_\psi$) follows from flatness of the naive local model in the single lattice case.  This concludes the proof when $v$ is unramified in $F$.

When $v$ ramifies in $F$ (subject to the hypotheses in \eqref{ram hypoth} and \eqref{diff hypoth}), flatness of each $\BM(v,\psi_0)^\naive$ is proved in \cite{S2}.
\end{proof}

\begin{remark}
\begin{altenumerate}
\item\label{naive flatness rem ramification} It is conjectured just after Th.~B in \cite{PR1} that when $v$ is unramified in $F$ and $\Lambda_v$ is self-dual, the naive local model $\BM(v,\psi_0)^\naive$ appearing in the proof of Theorem \ref{naive flat O_E,nu} is flat with reduced special fiber, without any assumption on the ramification of $v$ over $p$.\footnote{\label{mwy} Since the preprint version of this paper appeared, Muthiah--Weekes--Yacobi have posted a proof \cite{MWY} of this conjecture in full generality, by proving \cite[Conj.~5.8]{PR1}.}  This would imply (again using G\"ortz \cite{G2} to pass to the case of general $\Lambda_v$) that hypothesis \eqref{inert split cond} in both of Theorems \ref{naive flat thm} and \ref{naive flat O_E,nu} can be removed.
\item We do not know if the conclusion of Theorem \ref{naive flat O_E,nu} remains valid if one allows the places $v$ which ramify in $F$ to have any ramification over $p$.  However, the assumption for such $v$ that $\Lambda_v$ is self-dual is necessary if $n > 1$.
\item\label{naive flatness rem nu} 
Condition \eqref{diff hypoth} in Theorem \ref{naive flat O_E,nu} is necessary for the conclusion to hold, cf.~\cite[Cor.~3.3]{PR1}.
\end{altenumerate}\label{naive flatness rem}
\end{remark}

\subsection{Flat subscheme of $\CM^\naive_{K_{\wt G}}(\wt G)$}\label{ss:flatness conds}
Even if $\CM^\naive_{K_{\wt G}}(\wt G)$ is not flat over $\Spec O_{E, (p)}$, we sometimes can strengthen the conditions on the abelian varieties $(A, \iota, \lambda)$ occurring in the moduli problem to define a closed substack $\CM_{K_{\wt G}}(\wt G)$ of $\CM^\naive_{K_{\wt G}}(\wt G)$ which is flat with the same generic fiber.  For simplicity, let us consider this question after base change to the completed local ring $O_{E,\nu}$ for $\nu$ a $p$-adic place of $E$.  Then, similarly to the proof of Theorem \ref{naive flat O_E,nu}, the $O_F$-action on the abelian variety $A$ induces an action of $O_F \otimes_\BZ \BZ_p \cong \prod_{w} O_{F,w}$ on $\Lie A$, and hence a canonical decomposition 
\begin{equation}\label{decLie_w}
   \Lie A = \bigoplus_{w} \Lie_w A,
\end{equation}
where $w$ runs through the $p$-adic places of $F$.  For each $w$, using the notation of Section \ref{ss:naive flat}, the $O_{F_w^t}$-action on $\Lie_w A$ similarly induces a decomposition
\begin{equation}\label{decLie_w,psi}
   \Lie_w A = \bigoplus_{\psi \in \Hom_{\BQ_p}(F_w^t,\ov\BQ_p)} \Lie_{w,\psi} A;
\end{equation}
here in fact we choose an embedding $\alpha \colon \ov\BQ \to \ov\BQ_p$ inducing $\nu$ as just before \eqref{alpha}, and the decomposition \eqref{decLie_w,psi} is defined after base changing the moduli problem to $\Spec O_{E_\nu^\un}$, where $E_\nu^\un$ denotes the maximal unramified extension of $E_\nu$ (embedded via $\alpha$) in $\ov\BQ_p$.

We now consider conditions on $\Lie_{w,\psi} A$ under various assumptions on $w$ and $\psi$, as follows.  In some cases the conditions are quite technical to formulate and we only give references.  We again write $\Hom_{w,\psi}(F,\ov\BQ) \subset \Hom(F,\ov\BQ)$ for the fiber over $\psi$ in the diagram \eqref{nogoodname}.  Furthermore, we note that cases \eqref{wedge sit}--\eqref{refined spin sit} below are disallowed when $p = 2$ by our standing assumption that the places $v \in \CV_2$ are unramified in $F$.

\begin{altenumerate}
\renewcommand{\theenumi}{\arabic{enumi}}
\item\label{banal eisenstein sit} 
Suppose that the restricted function $r|_{\Hom_{w,\psi}(F,\ov\BQ)}$ takes values in $\{0,n\}$ (a \emph{banal} signature type at $\psi$). Then there is the \emph{Eisenstein condition} on $\Lie_{w,\psi} A$ of \cite[(4.10)]{RSZ3}.\footnote{Strictly speaking, here we mean that the expression $Q_{A_\psi}(\iota(\pi))$ defined in loc.\ cit.\ is the zero endomorphism on $\Lie_{w,\psi} A$.}  (Note that, in contrast to cases \eqref{drinfeld eisenstein sit}--\eqref{refined spin sit} below, here we make no ramification assumptions on $w$; however, the Eisenstein condition at $\psi$ is already implied by the Kottwitz condition \eqref{kottcond} if $w$ is unramified over $p$.)
\item\label{drinfeld eisenstein sit} 
Suppose that $w$ is unramified over $F_0$, consider the conjugate embedding $\ov\psi \colon F_{\ov w}^t \to \ov\BQ_p$, and suppose that the restricted function $r|_{\Hom_{w,\psi}(F,\ov\BQ) \cup \Hom_{\ov w,\ov\psi}(F,\ov\BQ)}$ is of the form
\[
   r_\varphi =
   \begin{cases}
      n-1,  &  \text{for some $\varphi = \varphi_0 \in \Hom_{w,\psi}(F,\ov\BQ) \cup \Hom_{\ov w,\ov\psi}(F,\ov\BQ)$};\\
      1,  &  \text{$\varphi = \ov\varphi_0$};\\
      \text{$0$ or $n$},  &  \varphi \neq \varphi_0,\ov\varphi_0.
   \end{cases}
\]
If $\ov\varphi_0 \in \Hom_{w,\psi}(F,\ov\BQ)$, then there is the \emph{Eisenstein condition} on $\Lie_{w,\psi} A$ of \cite[(8.2)]{RZ2}.   If $\ov\varphi_0 \in \Hom_{\ov w,\ov\psi}(F,\ov\BQ)$, then there is the \emph{Eisenstein condition} on $\Lie_{\ov w,\ov\psi} A$ of \cite[(8.2)]{RZ2}. (This condition is again already implied by the Kottwitz condition if $w$ is unramified over $p$.)
\item\label{wedge sit} 
Suppose that $w$ is ramified over $F_0$ and the place $v$ of $F_0$ under $w$ is unramified over $p$; or, equivalently, that $\Hom_{w,\psi}(F,\ov\BQ)$ is of the form $\{\varphi_\psi,\ov\varphi_\psi\}$ for some $\varphi_\psi \in \Phi$.  Then there is the \emph{wedge condition} of Pappas \cite{P} at $\psi$: if $r_{\varphi_\psi} \neq r_{\ov\varphi_\psi}$, then
\begin{equation}\label{wedge cond}
   \left.
   \begin{aligned}
   \bigwedge\nolimits^{r_{\ov\varphi_\psi} + 1}\bigl(\iota(a) - \varphi_\psi(a) \mid \Lie_{w,\psi} A\bigr) &= 0\\
   \bigwedge\nolimits^{r_{\varphi_\psi} + 1}\bigl(\iota(a) - \ov\varphi_\psi(a) \mid \Lie_{w,\psi} A\bigr) &= 0
   \end{aligned}
   \right\}
   \quad\text{for all}\quad a \in O_{F,w}.
\end{equation}
Here, using that $r_{\varphi_\psi} \neq r_{\ov\varphi_\psi}$, it is easy to see that $\varphi_\psi$ and $\ov\varphi_\psi$ map $F_w$ into $E_\nu^\un$, and the expressions $\varphi_\psi(a)$ and $\ov\varphi_\psi(a)$ are then viewed as sections of the structure sheaf of the base scheme, as in \eqref{kottcond}.  (There is no condition at $\psi$ when $r_{\varphi_\psi} = r_{\ov\varphi_\psi}$.  There is an analogous condition when $w$ is unramified over $F_0$, but it is already implied by the Kottwitz condition.)
\item\label{spin sit} 
In the same situation as in \eqref{wedge sit}, suppose in addition that $n$ is even.  Then there is the \emph{spin condition} of \cite[\S8.2]{PR3} on $\Lie_{w,\psi} A$.\footnote{Strictly speaking, loc.~cit.\ only formulates the spin condition on the local  model. We will not spell out the translation of the spin condition to $\Lie_{w,\psi} A$ more explicitly; it is entirely analogous to the translation of the refined spin condition of \cite[\S2.5]{S1} to the Lie algebra of a $p$-divisible group given in \cite[\S7]{RSZ2} and just before Rem.\ 4.6 in \cite{RSZ3}.}  We note that in the special case that $\Lambda_v$ is $\pi_v$-modular (recall this means that $\Lambda_v^\vee = \pi_v\i \Lambda_v$) and $\{r_{\varphi_\psi},r_{\ov\varphi_\psi}\} = \{1,n-1\}$, and in the presence of the wedge condition at $\psi$, the spin condition at $\psi$ admits the simple formulation that the endomorphism $\iota(\pi_v) \mid \Lie_{w,\psi} A$ is nonvanishing at each point of the base, cf.\ \cite[\S6]{RSZ2} or \cite[(4.28)]{RSZ3}.
\item\label{refined spin sit}  
In the same situation as in \eqref{wedge sit}, suppose in addition that $n$ is odd.  Then there is the \emph{refined spin condition} of \cite[\S2.5]{S1}, translated to a condition on $\Lie_{w,\psi} A$ as in \cite[\S7]{RSZ2} and just before Rem.\ 4.6 in \cite{RSZ3}.   We note that these latter two references treat on the nose the special case that $\Lambda_v$ is \emph{almost $\pi_v$-modular} (i.e.,\ $\Lambda_v \subset \Lambda_v^\vee \subset \pi_v\i\Lambda_v$ with $\dim_{O_{F,v}/\pi_v O_{F,v}} \pi_v\i\Lambda_v/\Lambda_v^\vee = 1$) and $\{r_{\varphi_\psi},r_{\ov\varphi_\psi}\} = \{1,n-1\}$.
\end{altenumerate}

\begin{theorem}\label{flatsub}
Suppose that for every $p$-adic place $w$ of $F$ and every embedding $\psi\colon F_w^t \to \ov\BQ_p$, \emph{one of the following hypotheses} is satisfied and, in each case, impose the given condition on the object $(A,\iota,\lambda)$ in the moduli problem for $\CM_{K_{\wt G}}^\naive(\wt G)_{O_{E_\nu^\un}}$.  Throughout, let $v$ denote the place of $F_0$ under $w$.
\begin{altenumerate}
\renewcommand{\theenumi}{\textit{\alph{enumi}}}
\item\label{a} $w$ is unramified over $F_0$ and $v$ satisfies the ramification hypothesis in Theorem \ref{naive flat O_E,nu}(\emph{\kern-.45ex\ref{unram hypoth}}\kern-.2ex); or $w$ is ramified over $F_0$, $v$ is unramified over $p$, and the lattice $\Lambda_v$ is self-dual.  Furthermore, the $r_\varphi$'s for varying $\varphi \in \Hom_{w,\psi}(F,\ov\BQ)$ differ by at most one.  In this case, impose no further condition.
\item\label{b} $w$ and $\psi$ satisfy the assumption in \eqref{banal eisenstein sit} above.  Then impose the Eisenstein condition  of \eqref{banal eisenstein sit} on $\Lie_{w,\psi} A$.
\item\label{c} $w$ and $\psi$ satisfy the assumptions in \eqref{drinfeld eisenstein sit} above and the lattice $\Lambda_v$ is self-dual or $\pi_v$-modular.\footnote{The definition of $\pi_v$-modular when $v$ is unramified in $F$ is word-for-word the same as when $v$ ramifies in $F$, namely that $\Lambda_v^\vee = \pi_v\i\Lambda_v$.} In the notation of \eqref{drinfeld eisenstein sit}, if $\ov\varphi_0 \in \Hom_{w,\psi}(F,\ov\BQ)$, then impose the Eisenstein condition of \eqref{drinfeld eisenstein sit} on $\Lie_{w,\psi} A$;  if $\ov\varphi_0 \in \Hom_{\ov w,\ov\psi}(F,\ov\BQ)$, then impose the Eisenstein condition of \eqref{drinfeld eisenstein sit} on $\Lie_{\ov w,\ov\psi} A$.
\item\label{d} $w$ and $\psi$ satisfy the assumptions in \eqref{wedge sit} above and the lattice $\Lambda_v$ is self-dual. Then impose the wedge condition on $\Lie_{w,\psi} A$.
\item\label{e} $w$ and $\psi$ satisfy the assumptions in \eqref{spin sit} above, $\Lambda_v$ is $\pi_v$-modular, and $\{r_{\varphi_\psi},r_{\ov\varphi_\psi}\} = \{1,n-1\}$.  Then impose the wedge condition and the spin condition on $\Lie_{w,\psi} A$.
\item\label{f} $w$ and $\psi$ satisfy the assumptions in \eqref{refined spin sit} above, $\Lambda_v$ is almost $\pi_v$-modular,
and $\{r_{\varphi_\psi},r_{\ov\varphi_\psi}\} = \{1,n-1\}$.  Then impose the refined spin condition on $\Lie_{w,\psi} A$.
\end{altenumerate}
Then these conditions descend to define a closed substack $\CM_{K_{\wt G}}(\wt G)_{O_{E,\nu}}$ of $\CM_{K_{\wt G}}^\naive(\wt G)_{O_{E,\nu}}$ which is flat over $\Spec O_{E,\nu}$ with the same generic fiber as $\CM_{K_{\wt G}}^\naive(\wt G)_{O_{E,\nu}}$.
\end{theorem}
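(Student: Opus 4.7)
The plan is to reduce the flatness assertion to a statement about the naive local model $\BM^\naive$ via the local model diagram \eqref{LMDfortildeG}, cut out a closed subscheme $\BM \subset \BM^\naive$ whose flatness is known factor by factor, and then pull back and descend along the diagram to produce the required closed substack $\CM_{K_{\wt G}}(\wt G)_{O_{E,\nu}}$.

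First I would work over $O_{E_\nu^\un}$. The decomposition \eqref{decLMv}--\eqref{decLMpsi_0} factorises $\BM^\naive_{O_{E_\nu^\un}}$ as a product of naive local models $\BM(v,\psi_0)^\naive_{O_{E_\nu^\un}}$, and via the local model diagram this matches the decompositions \eqref{decLie_w}--\eqref{decLie_w,psi} of $\Lie A$ for the universal abelian scheme. Concretely, for $\Lambda \in \CL$ the Hodge filtration on $\RH_1^{\mathrm{dR}}(A_\Lambda)$ coincides with the tautological flag on $\BM^\naive$ after pulling back under $\wt\varphi$, and this identification is $O_F \otimes_\BZ \CO_S$-equivariant, so each of the conditions in cases \eqref{b}--\eqref{f} on $\Lie_{w,\psi} A$ is exactly the pullback under $\wt\varphi$ of a closed condition on the corresponding factor $\BM(v,\psi_0)^\naive$. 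These closed conditions define a closed subscheme $\BM \subset \BM^\naive$ over $O_{E_\nu^\un}$, stable under the action of the smooth group scheme $\CP_{O_{E_\nu^\un}}$ because the defining conditions are $\CP$-equivariant. Pulling back under $\wt\varphi$ and descending through the $\CP_{O_{E_\nu^\un}}$-torsor $\pi$ therefore yields a closed substack of $\CM_{K_{\wt G}}^\naive(\wt G)_{O_{E_\nu^\un}}$, which in turn descends to the desired $\CM_{K_{\wt G}}(\wt G)_{O_{E,\nu}}$ by Galois-invariance of the collection of conditions across the $p$-adic places of $F$.

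Flatness of $\BM$ is equivalent to flatness of each factor $\BM(v,\psi_0)$. In case \eqref{a}, no further condition is imposed and flatness is already part of Theorem \ref{naive flat O_E,nu}. In case \eqref{b}, the Eisenstein cut at $\psi$ reduces the naive factor to an explicitly flat one, cf.\ \cite[\S 4]{RSZ3}. Case \eqref{c} is the fake-Drinfeld ramified situation of signature $(n-1,1)$, for which flatness of the Eisenstein local model is due to Rapoport--Zink \cite{RZ2}. In case \eqref{d}, flatness of the wedge local model at a self-dual vertex lattice is Pappas's theorem \cite{P}. In cases \eqref{e} and \eqref{f}, flatness of the spin, resp.\ refined spin, local models for $\pi_v$-modular, resp.\ almost $\pi_v$-modular, lattices of signature $\{1,n-1\}$ is the main result of Pappas--Rapoport \cite{PR3}, resp.\ Smithling \cite{S1} (with the Lie-algebraic translation as indicated in the statement). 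Flatness of the product $\BM$ then follows, and hence so does flatness of $\CM_{K_{\wt G}}(\wt G)_{O_{E,\nu}}$. To see that the generic fibers of $\BM$ and $\BM^\naive$ agree, it suffices to note that in characteristic zero each of the Eisenstein, wedge, spin, and refined spin conditions is automatic from the Kottwitz condition \eqref{kottcond} on the semi-simple $O_F$-action on $\Lie A$; this is recorded in each of the references above. Hence $\CM_{K_{\wt G}}(\wt G)_{O_{E,\nu}}$ and $\CM_{K_{\wt G}}^\naive(\wt G)_{O_{E,\nu}}$ share their generic fiber.

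The main obstacle is not a single deep step but rather the careful case-by-case bookkeeping needed to assemble the existing flat local model theorems under a uniform signature hypothesis. The cited flatness results are typically formulated for a single vertex lattice, and extending them to the periodic multichain $\CL$ requires the G\"ortz-style argument of \cite{G2} already used in the proof of Theorem \ref{naive flat O_E,nu}. One also has to verify that the hypotheses \eqref{a}--\eqref{f} are exhaustive, so that every pair $(w,\psi)$ falls into at least one case and the resulting conditions glue across the decomposition \eqref{Hom(F,ovBQ) decomp}; this is a combinatorial check that leans on the signature restrictions built into the various cases, and it is in this check that the hypothesis ``$r_\varphi$'s differ by at most one'' in \eqref{a} reappears as a rigidity statement essentially dictated by Theorem \ref{naive flat O_E,nu}.
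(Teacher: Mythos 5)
Your overall strategy—reduce via the local model diagram to the factor decomposition $\BM_{O_{E_\nu^\un}} \cong \prod_{v,\psi_0}\BM(v,\psi_0)_{O_{E_\nu^\un}}$, cut out the closed conditions factor by factor, prove flatness of each factor by citing the appropriate local model theorem, then descend—is exactly the approach of the paper. However, there is a genuine gap in case \eqref{d}: you justify flatness of the wedge local model for a self-dual lattice by citing Pappas \cite{P}, but Pappas's theorem only treats the signature $\{r_{\varphi_\psi},r_{\ov\varphi_\psi}\} = \{1,n-1\}$. Case \eqref{d} in the statement allows \emph{arbitrary} signature $\{r_{\varphi_\psi},r_{\ov\varphi_\psi}\}$ (the assumption \eqref{wedge sit} imposes no constraint beyond $w$ ramified over $F_0$ and $v$ unramified over $p$), and for general signature the flatness of the wedge local model is not in \cite{P} but is a separate result, \cite{S2}. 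Without that input your proof does not establish case \eqref{d} in the stated generality, so this is a missing ingredient and not merely a misattribution.

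A few smaller points: the relevant Eisenstein analysis for case \eqref{b} is in Appendix B of \cite{RSZ3} rather than \S4; and your remark that a G\"ortz-style chain argument from \cite{G2} is needed to pass from a single vertex lattice to the multichain is not actually required in cases \eqref{c}--\eqref{f}, since those cases specify the lattice type ($\Lambda_v$ self-dual, $\pi_v$-modular, or almost $\pi_v$-modular) precisely matching the hypotheses of the cited local model results, so each factor already falls directly within their scope. Also, the exhaustiveness of \eqref{a}--\eqref{f} is a hypothesis of the theorem rather than something to be verified within the proof.
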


\begin{proof}
As in the proof of Theorem \ref{naive flat O_E,nu}, the statements on flatness and the generic fiber reduce to statements on the local model $\BM_{O_{E_\nu^\un}} \subset \BM_{O_{E_\nu^\un}}^\naive$ defined by the analogous conditions on $\BM_{O_{E_\nu^\un}}^\naive$.  As in \eqref{decLMv} and \eqref{decLMpsi_0}, there is a product decomposition
\begin{equation}\label{decLM with conds}
   \BM_{O_{E_\nu^\un}} \cong \prod_{\substack{v \in \CV_p \\ \psi_0\colon\! F_{0,v}^t \rightarrow \ov\BQ_p}} \BM(v,\psi_0)_{O_{E_\nu^\un}},
\end{equation}
and we reduce to proving flatness factor-by-factor on the right-hand side.  For each $p$-adic place $w$ of $F$ and embedding $\psi\colon F_w^t \to \ov\BQ_p$, the place $v$ of $F_0$ under $w$ and the restriction $\psi|_{F_0^t}$ indexes one of the factors in \eqref{decLM with conds}, and all indices in the product arise in this way as $w$ and $\psi$ vary.  Thus we consider cases based on the type of $w$ and $\psi$.  If $w$ and $\psi$ are as in \eqref{a}, then the factor $\BM(v,\psi|_{F_{0,v}^t})_{O_{E_\nu^\un}} = \BM(v,\psi|_{F_{0,v}^t})_{O_{E_\nu^\un}}^\naive$ is flat by the proof of Theorem \ref{naive flat O_E,nu}.  If $w$ and $\psi$ are as in \eqref{b}, then the factor $\BM(v,\psi|_{F_{0,v}^t})_{O_{E_\nu^\un}}$ is flat (in fact, trivial) with the same generic fiber as $\BM(v,\psi|_{F_{0,v}^t})_{O_{E_\nu^\un}}^\naive$ by \cite[App.\ B]{RSZ3}.  If $w$ and $\psi$ are as in \eqref{c}, then $\BM(v,\psi|_{F_{0,v}^t})_{O_{E_\nu^\un}}$ is flat (in fact, smooth) with the same generic fiber by \cite[Lem.\ 8.6]{RZ2}.  If $w$ and $\psi$ are as in \eqref{d}, then $\BM(v,\psi|_{F_{0,v}^t})_{O_{E_\nu^\un}}$ is flat with the same generic fiber by \cite{S2} (when $\{r_{\varphi_\psi},r_{\ov\varphi_\psi}\} = \{1,n-1\}$, this was proved by Pappas in \cite{P}).  If $w$ and $\psi$ are as in \eqref{e}, then $\BM(v,\psi|_{F_{0,v}^t})_{O_{E_\nu^\un}}$ is flat (in fact, smooth) with the same generic fiber by \cite[\S5.3]{PR3}.  If $w$ and $\psi$ are as in \eqref{f}, then $\BM(v,\psi|_{F_{0,v}^t})_{O_{E_\nu^\un}}$ is flat (in fact, smooth) with the same generic fiber by \cite[Th.\ 1.4]{S1}.  By assumption, every $w$ and $\psi$ is of one of these types, and this completes the proof of the statements on flatness and the generic fiber. The statement on descent is easy to verify.
\end{proof}

\begin{remark}
The refined spin condition in \eqref{refined spin sit} above is defined in \cite{S1} for $n$ even as well as odd, and it is shown there to imply the wedge condition and the spin condition (without changing the generic fiber).  Therefore one may treat \eqref{wedge sit}--\eqref{refined spin sit} above in a uniform way by imposing the refined spin condition in each case; the advantage of the wedge condition and (to a lesser extent) the spin condition is only that they are simpler to state.  It is conjectured in loc.\ cit.\ that the refined spin condition produces flatness for any signature type $\{r_{\varphi_\psi},r_{\ov\varphi_\psi}\}$ and any lattice type (still with the place $v$ unramified over $p$).  When $v$ ramifies in $F$ and is \emph{ramified} over $p$, nothing is known about characterizing the (flat) local model in terms of an explicit moduli problem.
\end{remark}

\begin{remark}
Suppose that $v \in \CV_p$ splits in $F$, say $v = w \ov w$, and suppose that the restricted function $r|_{\Hom_w(F,\ov\BQ)}$ is of the form
\begin{equation}\label{drinfeld function}
   r_\varphi =
   \begin{cases}
      n-1,  &  \text{$\varphi = \varphi_0$ for some $\varphi_0 \in \Hom_w(F,\ov\BQ)$};\\
      n,  &  \varphi \in \Hom_w(F,\ov\BQ) \ssm \{\varphi_0\}.
   \end{cases}
\end{equation}
Then it is possible to impose a \emph{Drinfeld level structure} at $v$ in the moduli problem appearing in Theorem \ref{flatsub}.  More precisely, let $m$ be a nonnegative integer, and define $K_{G,v}^m$ to be the principal congruence subgroup mod $\fkp_v^m$ inside $K_{G,v}$, where $\fkp_v$ denotes the prime ideal in $O_{F_0}$ determined by $v$.  Let
\[
   K_{\wt G}^m := K_{Z^\BQ}^\circ \times K_G^p \times K_{G,v}^m \times \prod_{v'\in\CV_p\ssm \{v\}}K_{G, v'} \subset K_{\wt G}.
\]
Then one can extend the definition of $\CM_{K_{\wt G}}(\wt G)_{O_{E,\nu}}$ to the case of the level subgroup $K_{\wt G}^m$ by adding a Drinfeld level-$m$ structure at $v$.  Briefly, \eqref{drinfeld function} implies that in the decomposition \eqref{decLie_w} of $\Lie A$, the summand $\Lie_w A$ has rank $n[F_w:\BQ_p] - 1$, and the summand $\Lie_{\ov w} A$ has rank $1$.  The datum we add to the moduli problem is an $O_{F,\ov w}$-linear homomorphism of finite flat group schemes, 
\[
   \varphi\colon \pi_{\ov w}^{-m}\Lambda_{\ov w}/\Lambda_{\ov w} \to \uHom_{O_{F, \ov w}}(A_0[\ov w^m],A[\ov w^m]),
\]
which is a Drinfeld $\ov w^m$-structure on the target.  Here $\Lambda_{\ov w}$ is the summand attached to $\ov w$ in the natural decomposition $\Lambda_v = \Lambda_w \oplus \Lambda_{\ov w}$, with $\Lambda_v$ the vertex lattice at $v$ chosen prior to Definition \ref{def:inttuple}.  See \cite[\S4.3]{RSZ3} (which we note interchanges the roles of $w$ and $\ov w$) for more details.
\end{remark}

\subsection{Exotic smoothness and regularity}
In some special cases the conditions introduced in Section \ref{ss:flatness conds} define integral models with good or semi-stable reduction, beyond the totally unramified situation in Theorem \ref{intmoduliforshim}\eqref{intmoduliforshim ii}.  For simplicity, we will again consider these questions after base change to the completed local ring $O_{E,\nu}$ for $\nu$ a $p$-adic place of $E$.  We again choose an embedding $\alpha\colon \ov\BQ \to \ov\BQ_p$ inducing $\nu$ and use the notation introduced before Theorem \ref{naive flat O_E,nu}.

The following gives conditions when $\CM_{K_{\wt G}}(\wt G)_{O_{E,\nu}}$ is known to be smooth.

\begin{theorem}\label{exotsm thm}
In the setting of Theorem \ref{flatsub}, suppose that every pair $(w,\psi)$ is of type \eqref{b}, \eqref{c}, \eqref{e}, \eqref{f}, or the following special case of type \eqref{a}:
\begin{altenumerate}
\renewcommand{\theenumi}{\textit{\alph{enumi}}$'$}
\item\label{a'} $w$ is unramified over $p$ and the lattice $\Lambda_v$ is self-dual or $\pi_v$-modular.
\end{altenumerate}
Then the integral model $\CM_{K_{\wt G}}(\wt G)_{O_{E,\nu}}$ defined in Theorem \ref{flatsub} is smooth over $\Spec O_{E,\nu}$.
\end{theorem}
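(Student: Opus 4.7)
The plan is to deduce the smoothness of $\CM_{K_{\wt G}}(\wt G)_{O_{E,\nu}}$ from the local model diagram exactly as in the proofs of Theorems \ref{naive flat O_E,nu} and \ref{flatsub}. Specifically, the analog of the diagram \eqref{LMDfortildeG} with $\wt{\CM}^\naive_{K_{\wt G}}(\wt G)$ replaced by the preimage $\wt{\CM}_{K_{\wt G}}(\wt G)$ of $\CM_{K_{\wt G}}(\wt G)$ expresses $\CM_{K_{\wt G}}(\wt G)_{O_{E,\nu}}$ locally for the \'etale topology as a product of $(\CM_0^{\fka})_{O_{E,\nu}}$ (which is finite \'etale over $O_{E,\nu}$) with the local model $\BM_{O_{E_\nu^\un}}\subset \BM^\naive_{O_{E_\nu^\un}}$ cut out by the conditions of Theorem \ref{flatsub}. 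Since the torsor $\pi$ is smooth and $\wt\varphi$ is formally smooth, the question of smoothness of $\CM_{K_{\wt G}}(\wt G)_{O_{E,\nu}}$ is equivalent to that of $\BM_{O_{E_\nu^\un}}$.

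Next I would invoke the product decomposition
\[
   \BM_{O_{E_\nu^\un}} \cong \prod_{\substack{v \in \CV_p\\ \psi_0\colon\! F_{0,v}^t \rightarrow \ov\BQ_p}} \BM(v,\psi_0)_{O_{E_\nu^\un}},
\]
coming from \eqref{decLMv} and \eqref{decLMpsi_0}, and reduce smoothness of the total local model to smoothness of each factor. Each index $(v,\psi_0)$ comes from a pair $(w,\psi)$ as in the statement via $v=w|_{F_0}$ and $\psi_0 = \psi|_{F_{0,v}^t}$, and the local factor $\BM(v,\psi_0)_{O_{E_\nu^\un}}$ depends only on the restriction of $r$ to the $I_p$-orbit(s) lying over $\psi_0$, together with the lattice $\Lambda_v$ and the conditions imposed at $(w,\psi)$ (and at $(\ov w,\ov\psi)$ when relevant).

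I then check smoothness of $\BM(v,\psi_0)_{O_{E_\nu^\un}}$ case by case according to the five allowed types of $(w,\psi)$. In case \eqref{a'}, with $w$ unramified over $p$ and $\Lambda_v$ self-dual or $\pi_v$-modular, the local model is the naive local model for $\Res_{F_w/F_w^t}\GL_n$ associated to a single self-dual or $\pi_v$-modular lattice; this is smooth by \cite{PR1} (and is the statement used in Theorem \ref{intmoduliforshim}\eqref{intmoduliforshim ii}). In case \eqref{b}, the Eisenstein condition forces the local factor to be isomorphic to $\Spec O_{E_\nu^\un}$ by \cite[App.~B]{RSZ3}, hence trivially smooth. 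In case \eqref{c}, smoothness of $\BM(v,\psi_0)_{O_{E_\nu^\un}}$ with the Eisenstein condition imposed is \cite[Lem.~8.6]{RZ2}. In case \eqref{e}, with $\Lambda_v$ $\pi_v$-modular, $n$ even, and signature $\{1,n-1\}$ at the relevant place, smoothness in the presence of the wedge and spin conditions is \cite[\S5.3]{PR3}. In case \eqref{f}, with $\Lambda_v$ almost $\pi_v$-modular, $n$ odd, and signature $\{1,n-1\}$, smoothness under the refined spin condition is \cite[Th.~1.4]{S1}.

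The only nontrivial point is verifying that in each case the conditions imposed on $\Lie_{w,\psi} A$ in Theorem \ref{flatsub} really translate, under the local model diagram, to the moduli conditions for which smoothness has been proved in the cited references; this is essentially the same translation already used in the proof of Theorem \ref{flatsub}, and I expect it to go through without difficulty. Since smoothness of each factor is preserved by products and by the smooth morphisms $\pi$ and $\wt\varphi$ in the local model diagram, the theorem follows.
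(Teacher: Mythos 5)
Your proof follows the same strategy as the paper: reduce via the local model diagram to smoothness of $\BM_{O_{E_\nu^\un}}$, decompose into factors $\BM(v,\psi_0)_{O_{E_\nu^\un}}$, and verify smoothness of each factor according to the type of $(w,\psi)$, citing the same references for types \eqref{b}, \eqref{c}, \eqref{e} as the paper does. Two small inaccuracies in the case analysis are worth flagging.

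For case \eqref{a'}, citing \cite{PR1} for smoothness is misleading: Theorem~B of loc.~cit.\ establishes flatness (and the paragraph following it even notes the local models there are in general singular), not smoothness. The relevant observation is more elementary: since $w$ is unramified over $p$, the extension $F_w/F_w^t$ is trivial, so $\BM(v,\psi_0)_{O_{E_\nu^\un}}$ is simply a Grassmannian for $\GL_n$ over $O_{E_\nu^\un}$ — both in the self-dual case and (after passing through the self-dual periodic chain) in the $\pi_v$-modular case — and Grassmannians are smooth. Likewise, invoking Theorem~\ref{intmoduliforshim}\eqref{intmoduliforshim ii} is not quite right, as that statement only covers the case where every $\Lambda_v$ is self-dual. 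For case \eqref{f}, the paper supplements the reference to \cite[Th.~1.4]{S1} (which gives flatness with the correct generic fiber) with a citation to Richarz, \cite[Prop.~4.16]{A}, for the smoothness of that factor; your sketch carries over the citation to \cite{S1} from the proof of Theorem~\ref{flatsub} without noting where the smoothness itself comes from. Neither issue affects the correctness of the overall argument, but both should be corrected if this were written up in full.
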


\begin{proof}
Similarly to the proof of Theorem \ref{flatsub}, this reduces to smoothness of the local model $\BM$.  For $(w,\psi)$ of type \eqref{b}, \eqref{c}, \eqref{e}, or \eqref{f}, the factor $\BM(v,\psi|_{F_{0,v}^t})_{O_{E_\nu^\un}}$ in the decomposition \eqref{decLM with conds} (again denoting by $v$ the place of $F_0$ under $w$) is smooth by the references given in the proof of Theorem \ref{flatsub} (additionally using that smoothness of this factor in type \eqref{f} is due to Richarz \cite[Prop.~4.16]{A}).  Smoothness of this factor in type \eqref{a'} is standard (it is isomorphic to a Grassmannian for $\GL_n$).
\end{proof}

The fact that smoothness can occur in types \eqref{c}, \eqref{e}, and \eqref{f}, when ramification is present, is a surprising phenomenon termed \emph{exotic smoothness} in \cite{RSZ1,RSZ2}.

The following theorem gives conditions when $\CM_{K_{\wt G}}(\wt G)_{O_{E,\nu}}$ is known to have semi-stable reduction, and hence to be regular.

\begin{theorem}\label{sstab thm}
Let $n \geq 2$.  In the setting of Theorem \ref{flatsub}, suppose that there is a $p$-adic place $w_0$ of $F$, with place $v_0$ of $F_0$ under it, of the following special case of type \eqref{a}:
\begin{altenumerate}
\renewcommand{\theenumi}{\textit{\alph{enumi}}$''$}
\item\label{a''} $w_0$ is unramified over $p$, and the restricted function $r|_{\Hom_{w_0}(F,\ov\BQ)\cup \Hom_{\ov w_0}(F,\ov\BQ)}$ is of the form
\begin{equation}\label{sstab sig assumpt}
   r_\varphi =
   \begin{cases}
      \text{arbitrary},  &  \text{for some $\varphi = \varphi_0,\ov\varphi_0 \in \Hom_{w_0}(F,\ov\BQ) \cup \Hom_{\ov w_0}(F,\ov\BQ)$};\\
      \text{$0$ or $n$},  &  \varphi \neq \varphi_0,\ov\varphi_0.
   \end{cases}
\end{equation}
Furthermore, $\Lambda_{v_0}$ is \emph{almost self-dual} (i.e.,\ $\Lambda_{v_0} \subset \Lambda_{v_0}^\vee \subset \pi_{v_0}\i \Lambda_{v_0}$ with $\rank_{O_{F,v_0}/\pi_v O_{F,v_0}} \Lambda_{v_0}^\vee/\Lambda_{v_0} = 1$) or \emph{almost $\pi_{v_0}$-modular} (i.e.,\ $\Lambda_{v_0} \subset \Lambda_{v_0}^\vee \subset \pi_{v_0}\i \Lambda_{v_0}$ with $\rank_{O_{F,v_0}/\pi_v O_{F,v_0}} \Lambda_{v_0}^\vee/\Lambda_{v_0} = n-1$), or $\{r_{\varphi_0}, r_{\ov\varphi_0}\}=\{1, n-1\}$. 
\end{altenumerate}
In addition, suppose that every pair $(w,\psi)$ as in Theorem \ref{flatsub} for which $w \neq w_0,\ov w_0$ is of type \eqref{b}, \eqref{c}, or type \eqref{a'} in Theorem \ref{exotsm thm}.  Furthermore, suppose that $E_\nu$ is unramified over $\BQ_p$. Then $\CM_{K_{\wt G}}(\wt G)_{O_{E,\nu}}$ has semi-stable reduction over $\Spec O_{E,\nu}$.
\end{theorem}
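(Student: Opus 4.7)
The plan follows the strategy of the proofs of Theorems \ref{naive flat O_E,nu}, \ref{flatsub}, and \ref{exotsm thm}. First, via the local model diagram \eqref{LMDfortildeG} (whose left leg is a torsor under a smooth affine group scheme and whose right leg is formally smooth of the same relative dimension), strict semi-stable reduction of $\CM_{K_{\wt G}}(\wt G)_{O_{E,\nu}}$ is equivalent to the same property for the flat closed subscheme $\BM_{O_{E_\nu^\un}} \subset \BM_{O_{E_\nu^\un}}^\naive$ defined by the Lie-algebra conditions of Theorem \ref{flatsub}, after faithfully flat descent back to $O_{E,\nu}$. The hypothesis that $E_\nu/\BQ_p$ is unramified is used at this step to ensure that $p$ remains a uniformizer of $O_{E,\nu}$, so that semi-stable reduction over $O_{E,\nu}$ in the standard sense is detected by semi-stability of $\BM_{O_{E_\nu^\un}}$ with respect to $p$.

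Next apply the product decomposition \eqref{decLM with conds}. By Theorem \ref{exotsm thm} and its proof, every factor $\BM(v,\psi_0)_{O_{E_\nu^\un}}$ coming from a $(w,\psi)$ of type \eqref{b}, \eqref{c}, or \eqref{a'} is smooth over $O_{E_\nu^\un}$; since a product of a strictly semi-stable $O_{E_\nu^\un}$-scheme with smooth $O_{E_\nu^\un}$-schemes is again strictly semi-stable, the question reduces to the factor(s) attached to $w_0$ (and $\ov w_0$). Because $w_0$ is unramified over $p$, the factor at $v_0$ decomposes further, as in the unramified portion of the proof of Theorem \ref{naive flat O_E,nu}, along extensions $\psi\colon F_{w_0}^t \to \ov\BQ_p$ of each $\psi_0$; each piece is a $\GL_n$-local model associated to the restricted signature $r|_{\Hom_{w_0,\psi}(F,\ov\BQ)}$ and to the lattice chain generated by (the appropriate summand of) $\Lambda_{v_0}$. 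The signature hypothesis \eqref{sstab sig assumpt} implies that, apart from the unique piece attached to $\varphi_0$ (and analogously for $\ov\varphi_0$), the restricted signature is banal and the corresponding piece is trivial (isomorphic to $\Spec O_{E_\nu^\un}$).

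Everything then reduces to strict semi-stable reduction of a single $\GL_n$-local model attached to the signature $(r_{\varphi_0}, r_{\ov\varphi_0})$ and to the vertex lattice coming from $\Lambda_{v_0}$; this case-by-case verification is the main obstacle. When $\Lambda_{v_0}$ is almost self-dual, strict semi-stable reduction of this model is known from the analysis of the corresponding naive local model by Arzdorf \cite{A}. When $\Lambda_{v_0}$ is almost $\pi_{v_0}$-modular, the periodic lattice chain it generates is again a two-lattice chain per period and the same type of combinatorial analysis of affine charts applies, giving strict semi-stable reduction. Finally, when $\{r_{\varphi_0}, r_{\ov\varphi_0}\} = \{1,n-1\}$, the Lie-algebra conditions of Theorem \ref{flatsub} cut out the local model of Pappas \cite{P} at the relevant place; its strict semi-stable reduction for arbitrary vertex lattices in this signature follows by combining \cite{P} with G\"ortz's reduction \cite{G2} from a general periodic lattice chain to the case of a single lattice. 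Assembling these inputs and re-ascending the local model diagram yields strict semi-stable reduction of $\CM_{K_{\wt G}}(\wt G)_{O_{E,\nu}}$, as claimed.
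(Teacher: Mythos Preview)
Your overall strategy matches the paper's: reduce via the local model diagram and the product decomposition \eqref{decLM with conds} to a single nontrivial factor at $v_0$, use Theorem \ref{exotsm thm} to show the remaining factors are smooth, and then verify semi-stability of the $\GL_n$-type local model attached to $\Lambda_{v_0}$ and the signature $(r_{\varphi_0},r_{\ov\varphi_0})$.

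There are, however, two genuine problems. First, the references you invoke for the key step are wrong. Arzdorf \cite{A} treats local models for \emph{ramified unitary} groups with special parahoric level; it says nothing about the $\GL_n$-type local model arising here (where $w_0$ is unramified over $F_0$). Likewise Pappas \cite{P} is about the ramified unitary case with a self-dual lattice, not about $\GL_n$-local models with an arbitrary vertex lattice, and G\"ortz \cite{G2} establishes topological flatness, not semi-stability. The correct input---and what the paper actually uses---is G\"ortz \cite[\S4.4.5]{G1}, which shows that the $\GL_n$ local model for a periodic lattice chain generated by an almost self-dual (equivalently, almost $\pi$-modular) lattice, or for signature $(1,n-1)$ with any vertex lattice, has semi-stable reduction over its own reflex field $E_0$.

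Second, you misplace the role of the hypothesis that $E_\nu/\BQ_p$ is unramified. The passage from $O_{E_\nu^\un}$ down to $O_{E,\nu}$ is always unramified by definition, so nothing is needed there. The hypothesis is used instead when base-changing the factor $\BM(v_0,\varphi_0|_{F_{0,v_0}})$ from $O_{E_0}$ up to $O_{E_\nu^\un}$: since $E_0 \subset E_\nu$ and $E_\nu/\BQ_p$ is unramified, the extension $E_\nu^\un/E_0$ is unramified, and hence semi-stability is preserved. Without this, base change along a ramified extension would destroy semi-stable reduction of the nontrivial factor (cf.\ the remark following the theorem in the paper).
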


\begin{proof}
As before, the proof is via the local model, using in particular the product decomposition \eqref{decLM with conds}.  We first consider the factors in \eqref{decLM with conds} corresponding to $v_0$. Note that $F_{0, v_0}=F^t_{0, v_0}$ by hypothesis \eqref{a''}. Regarding $\varphi_0$ as an embedding $F_{w_0} \to \ov\BQ_p$ via $\alpha_*$ as in \eqref{nogoodname}, consider the restriction $\varphi_0|_{F_{0,v_0}}$, and let $E_0 \subset \ov\BQ_p$ denote the reflex field of the factor $\BM(v_0,\varphi_0|_{F_{0,v_0}})$.  By G\"ortz \cite[\S 4.4.5]{G1}, $\BM(v_0,\varphi_0|_{F_{0,v_0}})$ has semi-stable reduction over $O_{E_0}$ under the assumption that $\Lambda_{v_0}$ is almost self-dual or or almost $\pi_{v_0}$-modular, or under the assumption that $\{ r_{\varphi_0}, r_{\ov\varphi_0}\}=\{1, n-1\}$.  Since $E_\nu$ is unramified over $\BQ_p$, $\BM(\varphi_0|_{F_{0,v_0}})_{O_{E_\nu^\un}}$ has semi-stable reduction over $O_{E_\nu^\un}$.  It follows from the assumption on $r$ in \eqref{sstab sig assumpt} that the factors $\BM(v_0,\psi_0)_{O_{E_\nu^\un}}$ for $\psi_0 \neq \varphi_0|_{F_{0,v_0}}$ are isomorphic to $\Spec O_{E_\nu^\un}$ \cite[App.\ B]{RSZ3}.  By Theorem \ref{exotsm thm}, the factors $\BM(v,\psi_0)_{O_{E_\nu^\un}}$ for $v \neq v_0$ are smooth, and the theorem follows.
\end{proof}

\begin{remark}
The proof of Theorem \ref{exotsm thm} shows that if $(w,\psi)$ is of type \eqref{e} or \eqref{f}, then the factor of the local model in \eqref{decLM with conds} obtained from $(w,\psi)$ is also smooth.  However, we cannot allow the presence of such types in Theorem \ref{sstab thm}, since they would result in $E_\nu$ being \emph{ramified} over $\BQ_p$, which would destroy semi-stable reduction of the factor $\BM(v_0,\varphi_0|_{F_{0,v_0}})$ after extending scalars from $O_{E_0}$ to $O_{E_\nu}$.
\end{remark}

\begin{remark}
As is transparent from the above, smoothness and semi-stability of the $p$-integral models of the Shimura variety follow from the corresponding property of the local models.  We refer to \cite{HPR} for a classification,  under certain hypotheses, of general local models which are smooth, resp.\ have semi-stable reduction.  In particular, let us single out one case of a deeper level subgroup (cf.\ Remark \ref{r:deeper level}) in which semi-stable reduction arises.  Let $n$, $w_0$, and $v_0$ be as in Theorem \ref{sstab thm}, and modify the definition of type \eqref{a''} to require that $\{r_{\varphi_0}, r_{\ov\varphi_0}\}=\{1, n-1\}$ and to allow the level subgroup $K_{G,v_0}$ at $v_0$ to be the stabilizer in $\RU(W)(F_{0,v_0})$ of \emph{any} self-dual periodic lattice chain.  Then, by work of Drinfeld \cite{Dr} (see also \cite[\S 4.4.5]{G1}), the factor $\BM(v_0,\varphi_0|_{F_{0,v_0}})$ of the local model has semi-stable reduction.  Hence, provided that the other factors of the local model are smooth and that $E_\nu$ is unramified over $\BQ_p$, the corresponding moduli stack will have semi-stable reduction over $\Spec O_{E,\nu}$.
\end{remark}

\section{Global integral models of RSZ Shimura varieties}\label{s:global}
It is sometimes of interest to construct models of $M_{K_{\wt G}}(\wt G)$ over $\Spec O_E$.  Rather than striving for maximal generality, in this section we single out two situations where this can be done.  In both cases, we take the signature function $r$ for the $n$-dimensional space $W$ to be of fake Drinfeld type relative to a fixed element $\varphi_0\in\Phi$, cf.\ Example \ref{eg:special types}\eqref{eg:fake drinfeld}.  Recall that, in this case, $\varphi_0$ embeds $F\to E$ for $n\geq 2$, cf.\ Example \ref{eg:fake drinfeld E}. Hence each finite place $\nu$ of $E$ induces a place $w_\nu$ of $F$ and a place $v_\nu$ of $F_0$ via $\varphi_0$ for such $n$. We set
\begin{equation*}
   \CV_\ram := \{\, \text{finite places $v$ of $F_0$} \mid \text{$v$ ramifies in $F$} \,\},
\end{equation*}
and we assume that all $v \in \CV_\ram$ are unramified over $\BQ$ and do not divide $2$.

\subsection{Integral models with exotic good reduction}\label{s:glob exot good}
In this subsection we define global integral models which, when $n \geq 2$, have so-called exotic good reduction at all places $\nu$ of $E$ such that the induced place $v_\nu$ of $F_0$ ramifies in $F$ (and which, when $n = 1$, are \'etale over $\Spec O_E$).  We fix $\fka$, $\sqrt\Delta$, $\xi$, $\Lambda_0$, and $W_0$ as before \eqref{compisom}. As usual, we set $V = \Hom_F(W_0,W)$, and we endow $W$ with the $\BQ$-valued alternating form $\tr_{F/\BQ}\sqrt\Delta\i \sform$. We fix an $O_F$-lattice $\Lambda \subset W$ whose localization is a vertex lattice with respect to this form at every finite place $v$:
\[
   \Lambda_v \subset \Lambda_v^\vee \subset \pi_v\i\Lambda_v.
\]
Of course, as for any $O_F$-lattice in $W$, the localization $\Lambda_v$ is necessarily self-dual for all but finitely many $v$.  We define the finite set
\begin{equation}\label{V_in^Lambda}
   \CV^\Lambda_\mathrm{un} := \{\, \text{finite places $v$ of $F_0$}\mid \text{$v$ is  unramified in $F$ and $\Lambda_v \varsubsetneq \Lambda_v^\vee \varsubsetneq \pi_v\i\Lambda_v$} \,\}.
\end{equation}
In addition to our assumptions on $\CV_\ram$ at the beginning of this section, we impose on the tuple $(F/F_0, W, \Lambda)$ the following conditions.
\begin{altitemize}
\item
\it All $v \in \CV^\Lambda_{\mathrm{un}}$ are unramified over $\BQ$.
\item
{\it If $v\in\CV_\ram$, then the localization  $\Lambda_v$ of $\Lambda$ is $\pi_v$-modular if $n$ is even, and almost $\pi_v$-modular if $n$ is odd (see Theorem \ref{flatsub}\eqref{e}\eqref{f} for the definitions of these terms).}
\end{altitemize}
Starting from an arbitrary CM extension $F/F_0$ and $n$-dimensional hermitian space $W$, we note that if $n$ is odd, then such a $\Lambda$ always exists in $W$; whereas if $n$ is even, then such a $\Lambda$ exists if and only if $W_v$ is a split hermitian space for all $v \in \CV_\ram$ and for all finite $v$ which are inert in $F$ and ramified over $\BQ$.  We set
$$
   K_G^\circ := \bigl\{\, g\in G(\BA_f) \bigm| g(\Lambda\otimes \wh\BZ)=\Lambda\otimes \wh\BZ \,\bigr\} ,
$$
and, as usual, we define $K_{\wt G}^\circ := K^\circ_{Z^\BQ}\times K_G^\circ$. 

We formulate a moduli problem $\CF_{K^\circ_{\wt G}}(\wt G)$ over $\Spec O_{E}$ as follows.  As earlier in the paper, to lighten  notation, we suppress the dependence on the ideal $\fka$ and the element $\xi$. 

\begin{definition}\label{def RSZ glob}
The category functor $\CF_{K^\circ_{\wt G}}(\wt G)$ associates to each $O_{E}$-scheme $S$ the groupoid of tuples $(A_0,\iota_0,\lambda_0,A,\iota,\lambda)$, where
\begin{altitemize}
\item $(A_0,\iota_0,\lambda_0)$ is an object of $\CM_0^{\fka, \xi}(S)$; 
\item $A$ is an abelian scheme over $S$;
\item $\iota\colon O_F \to \End(A)$ is an action satisfying the Kottwitz condition \eqref{kottcond} of signature type $r$ on $O_F$; and
\item $\lambda$ is a polarization on $A$ whose Rosati involution satisfies condition \eqref{Ros} on $O_F$.
\end{altitemize}
 We also impose that the kernel of the polarization $\lambda$ is of the type prescribed in Definition \ref{def:inttuple}\eqref{sglob cond i} for every $p$, relative to the lattice $\Lambda$ fixed above. Furthermore, we impose for every finite place $\nu$ of $E$ that after base-changing $(A,\iota,\lambda)$ to $S\otimes_{O_E}O_{E, \nu}$, the resulting triple satisfies the conditions on $\Lie A$ imposed in the definition of $\CM_{K_{\wt G}^\circ}(\wt G)_{O_{E,\nu}}$ in Theorem \ref{flatsub}.  In particular, we note that when $n \geq 2$ and $\nu$ is such that the induced place $w_\nu$ of $F$ is ramified over $F_0$, this entails imposing the wedge condition and spin condition in Theorem \ref{flatsub}\eqref{e} when $n$ is even, and the refined spin condition in Theorem \ref{flatsub}\eqref{f} when $n$ is odd, on the appropriate summand of $\Lie A$.
 
 Finally, we impose the sign condition that at every geometric point $\ov s$ of $S$,
\begin{equation}\label{signcond2}
   \inv^r_v(A_{0,\ov s}, \iota_{0,\ov s}, \lambda_{0,\ov s}, A_{\ov s}, \iota_{\ov s}, \lambda_{\ov s}) = \inv_v(V) ,
\end{equation}
for every finite place $v$ of $F_0$ which is non-split in $F$. 

A morphism $(A_0,\iota_0,\lambda_0,A,\iota,\lambda) \to (A_0',\iota_0',\lambda_0',A',\iota',\lambda')$ in this groupoid is given by an isomorphism $\mu_0\colon (A_0,\iota_0,\lambda_0) \isoarrow (A_0',\iota_0',\lambda_0')$ in $\CM_0^{\fka,\xi}(S)$ and an $O_F$-linear isomorphism $\mu\colon A \isoarrow A'$ of abelian schemes pulling $\lambda'$ back to $\lambda$.
\end{definition}

The following theorem (the extension of \cite[Th.\ 5.2]{RSZ3} to the present setting) shows that the moduli functor $\CF_{K^\circ_{\wt G}}(\wt G)$ defines an extension of $M_{K^\circ_{\wt G}}(\wt G)$ over $\Spec O_E$ with good properties. It follows immediately from Theorem \ref{flatsub}  and Theorems  \ref{exotsm thm} and \ref{sstab thm}.

\begin{theorem}\label{globalnolevel}
The moduli problem $\CF_{K^\circ_{\wt G}}(\wt G)$ is representable by a Deligne--Mumford stack $\CM_{K^\circ_{\wt G}}(\wt G)$ flat over $\Spec O_{E}$. For every finite place $\nu$ of $E$, the base change of $\CM_{K^\circ_{\wt G}} (\wt G)$ to $\Spec O_{E,\nu}$ is isomorphic to the $\nu$-adic integral moduli space of Theorem \ref{flatsub} in the case of the level subgroup $K^\circ_{\wt G}$.
Hence:
\begin{altenumerate}
\item If $n \geq 2$, then $\CM_{K^\circ_{\wt G}}(\wt G)$ is smooth of relative dimension $n-1$ over the open subscheme of $\Spec O_E$ obtained by removing all finite places $\nu$ for which the induced place $v_\nu$ of $F_0$ lies in $\CV^\Lambda_{\mathrm{un}}$. If $n = 1$, then $\CM_{K^\circ_{\wt G}}(\wt G)$ is finite \'etale over all of $\Spec O_E$.
\item If $n \geq 2$, then $\CM_{K^\circ_{\wt G}}(\wt G)$ has semi-stable reduction over the open subscheme of $\Spec O_E$ obtained by removing all finite places $\nu$ ramified over $\BQ$ for which the induced place $v_\nu$ lies in $\CV^\Lambda_{\mathrm{un}}$.\qed
\end{altenumerate}   
\end{theorem}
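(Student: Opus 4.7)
My plan is to reduce the theorem to a place-by-place analysis and then quote Theorems \ref{flatsub}, \ref{exotsm thm}, and \ref{sstab thm} directly. First I would establish representability of $\CF_{K^\circ_{\wt G}}(\wt G)$ by a Deligne--Mumford stack. The underlying moduli of tuples $(A_0, \iota_0, \lambda_0, A, \iota, \lambda)$ satisfying only the Kottwitz condition, the Rosati and polarization-kernel conditions, and the level at the auxiliary factor $(A_0,\iota_0,\lambda_0)$, is representable by standard PEL arguments as a closed substack of a Mumford-type moduli space. The supplementary conditions on $\Lie A$ imported from Theorem \ref{flatsub} (wedge, spin, refined spin, or Eisenstein) are algebraic closed conditions, and the sign condition \eqref{signcond2} is locally constant in families (as recalled in Section \ref{ss:semiglob}) and hence cuts out a union of connected components. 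These together define the desired closed substack $\CM_{K^\circ_{\wt G}}(\wt G)$.

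The main step is the identification, at each finite place $\nu$ of $E$ lying above a rational prime $p$, of $\CM_{K^\circ_{\wt G}}(\wt G)_{O_{E,\nu}}$ with the stack produced by Theorem \ref{flatsub}. By Definition \ref{def RSZ glob} the local condition on $\Lie A$ is imposed to match that theorem, so the task is only to check that under our global hypotheses every pair $(w,\psi)$ of a $p$-adic place $w$ of $F$ and an embedding $\psi\colon F_w^t \to \ov\BQ_p$ falls into one of the cases \ref{a}--\ref{f} there. Writing $v$ for the place of $F_0$ under $w$, the case analysis runs as follows. If $v$ is split in $F$, then $w$ is unramified over $F_0$ and the fake Drinfeld signature restricts to either banal type or Drinfeld type at $(w,\psi)$, placing us in case \ref{a}, \ref{b}, or \ref{c} (with $\Lambda_v$ automatically self-dual outside finitely many places, and $\Lambda_v$ self-dual or $\pi_v$-modular in the relevant nonbanal case). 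If $v$ is inert in $F$ with $\Lambda_v$ self-dual, case \ref{a} applies. If $v \in \CV_{\rm in}^\Lambda$, our standing hypothesis that $v$ is unramified over $\BQ$, together with the fake Drinfeld signature (which makes the $r_\varphi$'s differ by at most one at each $\psi$), puts us again in case \ref{a} via hypothesis \ref{unram hypoth} of Theorem \ref{naive flat O_E,nu}. Finally, if $v \in \CV_\ram$, our hypotheses that $v$ is unramified over $\BQ$ and odd, and that $\Lambda_v$ is $\pi_v$-modular ($n$ even) or almost $\pi_v$-modular ($n$ odd), place us in case \ref{e} or \ref{f}; the fake Drinfeld signature forces the unique relevant pair $(w,\psi)$ to have $\{r_{\varphi_\psi}, r_{\ov\varphi_\psi}\} = \{1, n-1\}$, so the additional numerical hypothesis in those cases is met. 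This exhausts all pairs, so Theorem \ref{flatsub} gives the $\nu$-adic identification and flatness of $\CM_{K^\circ_{\wt G}}(\wt G)_{O_{E,\nu}}$; flatness over $\Spec O_E$ follows.

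The assertions (1) and (2) are then immediate from Theorems \ref{exotsm thm} and \ref{sstab thm}, applied $\nu$ by $\nu$. For (1), at each $\nu$ with $v_\nu \notin \CV_{\rm in}^\Lambda$ every local pair $(w,\psi)$ lies in one of the types \ref{a}$'$, \ref{b}, \ref{c}, \ref{e}, \ref{f} of Theorem \ref{exotsm thm}, which yields smoothness; the relative dimension $n-1$ is visible on the local model in the fake Drinfeld case, and when $n=1$ this relative dimension is $0$ and $\CM_0^{\fka,\xi}$ is already finite \'etale over $O_E$, so we obtain the finite \'etale conclusion. For (2), removing additionally the $\nu$ ramified over $\BQ$ ensures the hypothesis on $E_\nu$ in Theorem \ref{sstab thm}; taking the distinguished place there to be $w_\nu$, the remaining pairs $(w,\psi)$ with $w \neq w_\nu, \ov w_\nu$ again fall into types \ref{a}$'$, \ref{b}, or \ref{c}, so Theorem \ref{sstab thm} yields semi-stable reduction. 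The only real obstacle is the bookkeeping in the case analysis; once the global hypotheses on $F/F_0$, on $\Lambda$, and on the fake Drinfeld form of the signature are matched against the local types enumerated in Theorem \ref{flatsub}, the rest is an application of the local results.
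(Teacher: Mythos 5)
Your proposal takes essentially the same approach as the paper, which proves the theorem by declaring it an immediate consequence of Theorems \ref{flatsub}, \ref{exotsm thm}, and \ref{sstab thm}; your write-up usefully expands the "immediate" into an explicit case-by-case check of the hypotheses of those theorems. Two small imprecisions in your bookkeeping are worth noting, though neither amounts to a genuinely different route. First, for $v$ inert in $F$ with $\Lambda_v$ self-dual, when the signature is of Drinfeld type at the relevant $(w,\psi)$ and $v$ is ramified over $\BQ$ with $n\geq 3$, the applicable case is (c) (Drinfeld Eisenstein), not (a), since the $r_\varphi$'s need not differ by at most one; your case (a) only covers the banal pairs and those with $\Hom_{w,\psi}$ a singleton. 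Second, your parenthetical assertion that $\Lambda_v$ is "self-dual or $\pi_v$-modular in the relevant nonbanal case" at split $v$ is not literally a stated hypothesis of Section \ref{s:glob exot good}; the hypotheses only constrain $\Lambda_v$ at ramified places and at the inert places comprising $\CV_{\rm in}^\Lambda$. This reflects an ambiguity already present in the paper's set-up rather than a flaw in your reasoning, but you should either flag it as an implicit assumption or restrict to the case where $\Lambda_v$ is self-dual at all split $v$ (which is, in fact, what one wants for $K_{G,v}$ to be hyperspecial at split places).
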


\begin{remark}\label{level str recipe}
The isomorphism in Theorem \ref{globalnolevel} between the base change $\CM_{K^\circ_{\wt G}}(\wt G) \otimes_{O_E} O_{E,\nu}$ and the moduli space of Theorem \ref{flatsub} can be made \emph{canonical} in terms of the lattices $\Lambda_0 \subset W_0$ and $\Lambda \subset W$ fixed prior to Definition \ref{def RSZ glob}. Indeed, given an object $(A_0,\iota_0,\lambda_0,A,\iota,\lambda)$ in the moduli problem of Definition \ref{def RSZ glob} over an $O_{E,\nu}$-scheme, one defines the prime-to-$p$ level $\ov\eta^p$ to be the set of all isometries $\wh\RV^p(A_0,A) \isoarrow V \otimes_F \BA_{F,f}^p$ carrying $\wh\RT^p(A_0,A)$ to $\Hom_{O_F}(\Lambda_0,\Lambda) \otimes_{O_F} \wh O_F^p$.
\end{remark}

\begin{remark}\label{prod stack}
Consider the moduli problem that associates to each $O_E$-scheme $S$ the groupoid of triples $(A,\iota,\lambda)$ as in the last three bullet points of Definition \ref{def RSZ glob}, where the kernel of $\lambda$ is of the type prescribed in Definition \ref{def:inttuple}\eqref{sglob cond i} for every $p$ with respect to our fixed $\Lambda$, and such that for every finite place $\nu$ of $E$, the base change of $(A,\iota,\lambda)$ to $S\otimes_{O_E}O_{E, \nu}$ satisfies the conditions on $\Lie A$ imposed in Theorem \ref{flatsub}.  Via essentially the same proof as for Theorem \ref{flatsub}, this moduli problem is represented by a Deligne--Mumford stack $\CM_r$ which is flat over $\Spec O_E$.  Then the stack $\CM_{K^\circ_{\wt G}}(\wt G)$ of Theorem \ref{globalnolevel} admits the simple description as the open and closed substack of
\begin{equation*}
   \CM_0^{\fka,\xi} \times_{\Spec O_E} \CM_r
\end{equation*}
where the sign condition \eqref{signcond2} holds pointwise.

We note that $\CM_r$ is an integral model for a finite disjoint union of copies of the  Shimura variety $S(G^\BQ,  \{h_{G^\BQ}\})$ of Kottwitz type for \emph{maximal level structure}; therefore the  previous description  explains the relation between  the  integral model  $\CM_r$ and the integral model $\smash[b]{\CM_{K^\circ_{\wt G}}(\wt G)}$ of the RSZ Shimura variety $S(\wt G,  \{h_{\wt G}\})$  for  level structure  $K^\circ_{\wt G}$.
\end{remark}

\begin{remark}
\begin{altenumerate}
\item\label{inert sign}
Let $v$ be a finite place of $F_0$ which is non-split in $F$.  Let $\ell$ denote the residue characteristic of $v$. Let $k$ be an $O_E$-algebra which is an algebraically closed field of characteristic not $\ell$. Generalizing somewhat from the setting of Definition \ref{def RSZ glob}, let $(A_0,\iota_0,\lambda_0,A,\iota,\lambda)$ be a tuple consisting of an abelian variety $A_0$ over $\Spec k$ of dimension $[F_0 : \BQ]$, an action $\iota_0\colon O_{F,(\ell)} \to \End_{(\ell)}(A_0)$ up to prime-to-$\ell$ isogeny, a quasi-polarization $\lambda_0$ on $A_0$ such that $\Ros_\lambda(\iota_0(a)) = \iota_0(\ov a)$ for all $a \in O_{F,(\ell)}$, and a triple $(A,\iota,\lambda)$ of the same form, except where $A$ has dimension $n\cdot [F_0 : \BQ]$.  Consider the $O_{F,v}$-lattice $\RT_v(A_0)$ inside the one-dimensional $F_v$-vector space $\RV_v(A_0)$, and let $\RT_v(A_0)^\vee$ denote the dual lattice with respect to $\lambda_0$ and the Weil pairing. Similarly define $\RT_v(A)^\vee$ inside the $n$-dimensional vector space $\RV_v(A)$.  Say that
\begin{equation}\label{m}
   \RT_v(A_0)^\vee = \pi_v^m \RT_v(A_0)
\end{equation}
inside $\RV_v(A_0)$. Recall the $F_v/F_{0,v}$-hermitian space $\RV_v(A_0,A) = \Hom_{F_v}(\RV_v(A_0),\RV_v(A))$ from \eqref{V_v(A_0,A)}. Then the $O_{F,v}$-lattice
\[
   \RT_v(A_0,A) := \Hom_{O_{F,v}}\bigl(\RT_v(A_0),\RT_v(A)\bigr) \subset \RV_v(A_0,A)
\]
has hermitian dual
\begin{equation}\label{hd1}
   \RT_v(A_0,A)^* = \Hom_{O_{F,v}}\bigl(\RT_v(A_0)^\vee,\RT_v(A)^\vee\bigr) = \pi_v^m \Hom_{O_{F,v}}\bigl(\RT_v(A_0),\RT_v(A)^\vee\bigr).
\end{equation}
Similarly, for the moment let $\Lambda_{0,v} \subset W_{0,v}$ and $\Lambda_v \subset W_v$ be any $O_{F,v}$-lattices, and suppose that $\Lambda_{0,v}^\vee = \pi_v^m\Lambda_{0,v}$. Then the $O_{F,v}$-lattice
\[
   L_v := \Hom_{O_{F,v}}(\Lambda_{0,v},\Lambda_v) \subset V_v
\]
has hermitian dual
\begin{equation}\label{hd2}
   L_v^* = \Hom_{O_{F,v}}(\Lambda_{0,v}^\vee,\Lambda_v^\vee) = \pi_v^m \Hom_{O_{F,v}}(\Lambda_{0,v},\Lambda_v^\vee).
\end{equation}
If the quasi-polarization $\lambda$ is such that the relative position of $\RT_v(A)$ and $\RT_v(A)^\vee$ in $\RV_v(A)$ is the same as that of $\Lambda_v$ and $\Lambda_v^\vee$ in $W_v$, then it follows from \eqref{hd1} and \eqref{hd2} that the relative position of $\RT_v(A_0,A)$ and $\RT_v(A_0,A)^*$ in $\RV_v(A_0,A)$ is the same as that of $L_v$ and $L_v^*$ in $V_v$.  In particular, this will be the case if $(A_0,\iota_0,\lambda_0)$ arises from a $k$-point on $\CM_0^{\fka,\xi}$, the lattice $\Lambda_{0,v}$ is the localization at $v$ of the global lattice $\Lambda_0 \subset W_0$ fixed prior to Definition \ref{def RSZ glob}, and $\lambda$ induces an honest polarization of $v$-divisible groups $A[v^\infty] \to A^\vee[v^\infty]$ whose kernel satisfies the condition in Defintion \ref{def:inttuple}\eqref{sglob cond i} relative to $\Lambda_v$.

Now suppose that $v$ is \emph{inert} in $F$. Then the split and non-split $n$-dimensional $F_v/F_{0,v}$-hermitian spaces are distinguished by whether $\ord_v (\det B)$ is respectively even or odd, for $B$ the change-of-basis matrix going from any $F_v$-basis of the vector space to its dual basis with respect to the hermitian form (independent of the choice of basis).  Hence $\RT_v(A_0,A)$ and $\RT_v(A_0,A)^*$ having the same relative position as $L_v$ and $L_v^*$ implies that $\RV_v(A_0,A)$ and $V_v$ are isometric hermitian spaces.  Hence the sign condition \eqref{signcond2} for $v$ is automatically satisfied at all points $\ov s$ of residue characteristic not $\ell$.  It then follows from flatness of the moduli problem\footnote{More precisely, the analogous moduli problem defined without the sign condition is also flat over $\Spec O_E$, since flatness is purely a question of the local model at each finite place of $E$.} over $\Spec O_E$ in Theorem \ref{globalnolevel} and local constancy of $\inv_v^r$ as a function on general base schemes \cite[Prop.\ A.1]{RSZ3} that the sign condition is automatically satisfied everywhere, for all inert $v$.

Now suppose that $n$ is \emph{even} and $v$ \emph{ramifies} in $F$.  Then the split and non-split $n$-dimensional $F_v/F_{0,v}$-hermitian spaces are distinguished by whether they respectively do or do not contain a $\pi_v^i$-modular lattice $M$ with respect to the hermitian form (meaning that $M^* = \pi_v^{-i}M$) for some, or equivalently any, odd integer $i$.  Similarly to the previous paragraph, this implies that the sign condition \eqref{signcond2} is automatically satisfied at $v$ (the main new point being that at points of the moduli space of residue characteristic not $\ell$, the integer $m$ in \eqref{m} must be even when $v$ is ramified, and the condition on $\lambda$ in Definition \ref{def:inttuple}\eqref{sglob cond i} at $v$ relative to a $\pi_v$-modular $\Lambda_v$ then forces $\RV_v(A_0,A)$ to be split).  We conclude that \emph{the full sign condition \eqref{signcond2} is automatically satisfied when $n$ is even}. In particular, the open and closed embedding
\[
   \CM_{K^\circ_{\wt G}}(\wt G) \subset \CM_0^{\fka,\xi} \times_{\Spec O_E} \CM_r
\]
of Remark \ref{prod stack} is an equality when $n$ is even.
\item\label{Q sign}
Suppose that $F_0 = \BQ$.  Then the sign condition can be replaced by the condition that for every geometric point $\ov s$ of $S$, there exists an isomorphism of hermitian $O_{F,\ell}$-lattices
\[
   \Hom_{O_{F,\ell}}\bigl(\RT_\ell(A_{0,\ov s}),\RT_\ell(A_{\ov s})\bigr) \simeq \Hom_{O_{F,\ell}}(\Lambda_{0,\ell},\Lambda_\ell)
\]
for every prime number $\ell \neq \charac \kappa(\ov s)$, cf.\ \cite[\S2.3]{BHKRY}.  Indeed, this follows from the product formula and the Hasse principle for hermitian forms, cf.\ Remark \ref{excone}. 
\end{altenumerate}\label{redsign}
We finally note that these remarks using flatness  are  also applicable to the  semi-global moduli problems  in Theorem \ref{flatsub}. 
\end{remark}

\subsection{Integral models for principal polarization}
In this subsection we generalize the integral models of \cite{BHKRY} from the case $F_0 = \BQ$ to the case of arbitrary totally real $F_0$.  
Throughout, we take $\fka = O_{F_0}$ and assume that $M_0^{O_{F_0}} \neq \emptyset$; recall from Remark \ref{remMa}\eqref{rem exa satisfied} that this assumption is satisfied whenever $F/F_0$ is ramified at some finite place.  We let $\sqrt\Delta$, $\xi$, $\Lambda_0$, $W_0$, and $V$ all be as in Section \ref{s:glob exot good}. We assume that $W$ contains an $O_F$-lattice $\Lambda$ which is \emph{self-dual} for $\tr_{F/\BQ}\sqrt\Delta\i \sform$, and we fix such a $\Lambda$ once and for all. (Note that this implies that the set $\CV^\Lambda_{\mathrm{un}}$ defined as in \eqref{V_in^Lambda} is empty in the present case.) We set, as in the previous subsection, 
\[
   K_G^\circ := \bigl\{\, g\in G(\BA_f)\bigm| g(\Lambda\otimes \wh\BZ)=\Lambda\otimes \wh\BZ \,\bigr\} ,
\]
and, as usual, we define $K_{\wt G}^\circ=K^\circ_{Z^\BQ}\times K_G^\circ$. 

In the present situation, we formulate the following variant of the moduli problem in Definition \ref{def RSZ glob}.  As usual, we suppress the ideal $\fka = O_{F_0}$ and the element $\xi$ in the notation.

\begin{definition}\label{glob princ pol}
The category functor $\CF_{K^\circ_{\wt G}}(\wt G)$ associates to each $O_{E}$-scheme $S$ the groupoid of tuples $(A_0,\iota_0,\lambda_0,A,\iota,\lambda)$, where $(A_0,\iota_0,\lambda_0,A,\iota)$ is as in Definition \ref{def RSZ glob}, and where
\begin{altitemize}
\item $\lambda$ is a  \emph{principal} polarization whose Rosati involution satisfies condition \eqref{Ros} on $O_F$.
\end{altitemize}
Again, we impose the sign condition \eqref{signcond2} for every finite non-split place of $F_0$.  Likewise, we impose for every finite place $\nu$ of $E$ that after base-changing $(A,\iota,\lambda)$ to $S\otimes_{O_E}O_{E, \nu}$, the resulting triple satisfies the conditions on $\Lie A$ imposed in the definition of $\CM_{K_{\wt G}^\circ}(\wt G)_{O_{E,\nu}}$ in Theorem \ref{flatsub}.  In particular, we note that for the places $w$ of $F$ which are ramified over $F_0$ and of the same residue characteristic as $\nu$, this entails imposing the wedge condition in Theorem \ref{flatsub}\eqref{d} on each summand $\Lie_{w,\psi} A$ (when the signature function $r|_{\Hom_{w,\psi}(F,\ov\BQ)}$ is of banal type, it is equivalent to impose the Eisenstein condition in Theorem \ref{flatsub}\eqref{b}).

The morphisms in this groupoid are as in Definition \ref{def RSZ glob}.
\end{definition}

\begin{theorem}\label{BHKRYmod}
The moduli problem $\CF_{K^\circ_{\wt G}}(\wt G)$ is representable by a Deligne--Mumford stack $\CM_{K^\circ_{\wt G}}(\wt G)$ flat over $\Spec O_{E}$. For every finite place $\nu$ of $E$, the base change of $\CM_{K^\circ_{\wt G}} (\wt G)$ to $\Spec O_{E,\nu}$ is canonically isomorphic to the $\nu$-adic integral moduli space of Theorem \ref{flatsub} in the case of the level subgroup $K^\circ_{\wt G}$. Furthermore:
\begin{altenumerate}
\item\label{BHKRY i} 
If $n \geq 2$, then $\CM_{K^\circ_{\wt G}}(\wt G)$ is smooth of relative dimension $n-1$ over the open subscheme of $\Spec O_E$ obtained by removing the set $\CV_\ram(E)$ of finite places $\nu$ for which the induced place $v_\nu$ of $F_0$ lies in $\CV_\ram$.  If $n = 1$, then $\CM_{K^\circ_{\wt G}}(\wt G)$ is finite \'etale over all of $\Spec O_E$.
\item\label{BHKRY ii} 
If $n \geq 2$, then the fiber of $\CM_{K^\circ_{\wt G}}(\wt G)$ over a place $\nu \in \CV_\ram(E)$ has only isolated singularities. If $n \geq 3$, then blowing up these isolated points for all such $\nu$ yields a model  $\CM^\nat_{K^\circ_{\wt G}}(\wt G)$ which has semi-stable reduction, and hence is regular, over the open subscheme of $\Spec O_E$ obtained by removing the places $\nu \in \CV_\ram(E)$ which are ramified over $F$.  This model represents the moduli problem $\CF^\nat_{K^\circ_{\wt G}}(\wt G)$ formulated below.
\end{altenumerate}   
\end{theorem}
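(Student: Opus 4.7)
The plan is to reduce the theorem to a local analysis at each $p$-adic place of $E$ via the local model diagram \eqref{LMDfortildeG}, invoking Theorem \ref{flatsub} wherever possible and supplementing it with a direct study of Pappas's local model at the places in $\CV_\ram(E)$.

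First I would settle representability and the canonical identification with the $\nu$-adic model. Representability by a DM stack is a standard PEL-type statement (Kottwitz, Lan): the $(A,\iota,\lambda)$ component is controlled by an honest principal polarization, the Kottwitz and Lie-algebra conditions are closed, and the sign condition is locally constant by \cite[Prop.~A.1]{RSZ3}, hence carves out an open and closed substack.  Since $\fka=O_{F_0}$ forces $\Lambda_0$ to be self-dual, $\CM_0^{O_{F_0},\xi}$ is finite \'etale over $\Spec O_E$.  To identify the base change to $\Spec O_{E,\nu}$ with the semi-global moduli space of Theorem \ref{flatsub}, I would apply Remark \ref{level str recipe}: given an $S$-point $(A_0,\iota_0,\lambda_0,A,\iota,\lambda)$, define $\ov\eta^p$ to be the set of all $\BA_{F,f}^p$-linear isometries $\wh\RV^p(A_0,A) \isoarrow V\otimes_F \BA_{F,f}^p$ carrying $\wh\RT^p(A_0,A)$ onto the self-dual lattice $\Hom_{O_F}(\Lambda_0,\Lambda)\otimes_{O_F}\wh O_F^p$.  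The sign condition \eqref{signcond2}, combined with the Hasse principle for hermitian spaces, ensures this set is a full $K^{\circ,p}_G$-orbit, making the assignment manifestly functorial.

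For flatness and the smoothness claim in \eqref{BHKRY i}, I would use the product decompositions \eqref{decLMv}--\eqref{decLMpsi_0} of the local model.  Because $r$ is of fake Drinfeld type relative to $\varphi_0$, the non-banal indices $\varphi_0,\ov\varphi_0$ both lie over $v_\nu$; hence at every place $v$ of $F_0$ of the same residue characteristic as $\nu$ but distinct from $v_\nu$, the restricted signature is banal, and the corresponding factor of the local model is isomorphic to $\Spec O_{E_\nu^\un}$ by \cite[App.~B]{RSZ3}.  The only nontrivial factor sits at $v_\nu$, where the restricted signature is of Drinfeld type $(1,n-1)$.  When $v_\nu \notin \CV_\ram$, the place $w_\nu$ is unramified over $F_0$, so the hypotheses of case (c) of Theorem \ref{flatsub} are satisfied thanks to the self-duality of $\Lambda_{v_\nu}$, and the factor is smooth.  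This proves \eqref{BHKRY i}; the case $n=1$ collapses to $\CM_{K^\circ_{\wt G}}(\wt G)=\CM_0^{O_{F_0},\xi}$ suitably reinterpreted with the given CM type, which is finite \'etale by \cite[Prop.~3.1.2]{Ho-kr}.

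The main obstacle is part \eqref{BHKRY ii}.  At $\nu \in \CV_\ram(E)$, the place $v_\nu \in \CV_\ram$ is unramified over $p$, $w_\nu/F_{0,v_\nu}$ is ramified quadratic, $\Lambda_{v_\nu}$ is self-dual, and the restricted signature is $(1,n-1)$; the relevant factor of the local model is then precisely the Pappas local model of \cite{P}, whose explicit equations present its special fiber as a union of two $(n-1)$-dimensional components meeting transversally along a codimension-one subvariety except at a unique singular point (the worst Kottwitz--Rapoport stratum), which carries an isolated singularity.  Formal smoothness of $\wt\varphi$ in the local model diagram transports these to finitely many isolated singular points in the special fiber of $\CM_{K^\circ_{\wt G}}(\wt G)$, and they are the only non-smooth points there, yielding the first sentence of \eqref{BHKRY ii}.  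For $n\geq 3$, blowing up the singular point of the local model---the resolution classically introduced by Kr\"amer in the case $F_0=\BQ$---produces a scheme with semi-stable reduction over $O_{E_\nu^\un}$, provided the scalar extension $E_\nu/F_{w_\nu}$ is unramified, which is exactly the hypothesis in the statement.  Because this blowup is canonical and equivariant under the parahoric group scheme $\CP_{O_{E,\nu}}$, it transports along the local model diagram to a global blowup $\CM^\nat_{K^\circ_{\wt G}}(\wt G)$ of $\CM_{K^\circ_{\wt G}}(\wt G)$.  The remaining, and most delicate, step---the technical heart of \eqref{BHKRY ii}---is to describe the blown-up stack in moduli-theoretic terms as $\CF^\nat_{K^\circ_{\wt G}}(\wt G)$, by adjoining at each ramified place an additional $O_F$-stable locally free subsheaf of the Hodge filtration that records the exceptional divisor, and to verify that this moduli problem is indeed represented by the blowup.
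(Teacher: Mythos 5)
Your proposal follows the same overall architecture as the paper's proof: reduce everything to the local model via the local model diagram, split the local model into factors over places of $F_0$ and embeddings $\psi_0$, note that all factors away from $v_\nu$ are trivial by the Eisenstein condition, and identify the remaining factor at $v_\nu$ with Pappas's local model for $\GU_n(F_{w_\nu}/F_{0,v_\nu})$ with self-dual lattice and signature $(n-1,1)$. Representability, flatness, and part (i) are handled exactly as in the paper via Theorems \ref{flatsub} and \ref{exotsm thm}, and the canonical identification over $O_{E,\nu}$ via the recipe in Remark \ref{level str recipe} is also the route the paper takes. The key citations for part (ii)---the isolated singularity and the semi-stable reduction of the blowup from Pappas \cite[Th.\ 4.5]{P}, the reflex-field constraint forcing the hypothesis that $\nu$ be unramified over $F$, and the eventual moduli description via an extra Lie-algebra filtration---also match.

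One genuine factual error: the explicit description you give of the special fiber of Pappas's local model, as ``a union of two $(n-1)$-dimensional components meeting transversally along a codimension-one subvariety except at a unique singular point,'' is incorrect for $n \geq 3$. In that range the special fiber of the Pappas model is irreducible with a single quadric-cone singularity at the worst point; it is the \emph{Kr\"amer} model (the blowup) whose special fiber consists of two smooth components (the exceptional divisor and the strict transform) meeting transversally, and it has no singular points at all. Your description conflates the two. This does not undermine the argument, since the only facts you actually use---the singularity is isolated and the blowup has semi-stable reduction---are correct and correctly attributed to \cite{P}; but the geometric picture you paint as motivation does not hold. You should also note that the last step you flag as ``delicate'' (showing that the blowup represents $\CF^\nat_{K^\circ_{\wt G}}(\wt G)$) is genuinely carried out in the paper via the Kr\"amer--Eisenstein condition and Lemma \ref{KE redundant lemma}; the paper's proof cites this part as ``the moduli problem formulated below'' and defers the verification to that discussion rather than including it in the proof of the theorem itself.
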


\begin{proof}
Representability of $\CF_{K^\circ_{\wt G}}(\wt G)$ is standard, and the statement on the base change to $\Spec O_{E,\nu}$ is obvious, with level structures defined as in Remark \ref{level str recipe} (thus the isomorphism over $\Spec O_{E,\nu}$ is made canonical via the choice of $\Lambda_0$ and $\Lambda$). Flatness follows from Theorem \ref{flatsub}.  Assertion \eqref{BHKRY i} follows from Theorem \ref{exotsm thm}. Assertion \eqref{BHKRY ii} reduces to a statement on the local model.  More precisely, let $\nu \in \CV_\ram(E)$, let $p$ denote the residue characteristic of $\nu$, and consider the decomposition of the local model $\BM_{O_{E_\nu^\un}}$ in \eqref{decLM with conds} relative to the choice of an embedding $\alpha\colon \ov\BQ \to \ov\BQ_p$ inducing $\nu$.  Since the signature type is of fake Drinfeld type, the factors $\BM(v,\psi_0)_{O_{E_\nu^\un}}$ for all $v\neq v_\nu$ are trivial by the Eisenstein condition.  Since we assume that every $v \in \CV_\ram$ is unramified over $\BQ$, we have $F_{0,v_\nu} = F_{0,v_\nu}^t$, and similarly $\BM(v_\nu, \psi_0)_{O_{E_\nu^\un}}$ is trivial for all embeddings $\psi_0\colon F_{0,v_\nu} \to \ov\BQ_p$ except the single embedding $\psi_{0,\alpha}$ induced by $\alpha \circ \varphi_0$.  The factor $\BM(v_\nu, \psi_{0,\alpha})_{O_{E_\nu^\un}}$ is then the base change to $\Spec O_{E_\nu^\un}$ of the local model for $\GU_n(F_{v_\nu}/F_{0,v_\nu})$ in the case of a self-dual lattice and signature type $(n-1,1)$.  By Pappas \cite[Th.~4.5 \& its proof]{P}, when $n \geq 2$, this local model is singular at a single point, with blowup at this point of semi-stable reduction.  When $n \geq 3$, the reflex field of this local model identifies with $F_{v_\nu}$, and therefore semi-stable reduction is preserved after the base change $O_{F,v_\nu} \to O_{E_\nu^\un}$ provided $\nu$ is unramified over $F$.
\end{proof}

\begin{remark}
When $n = 2$, the reflex field of the local model for $\GU_2(F_{v_\nu}/F_{0,v_\nu})$ appearing in the proof of Theorem \ref{BHKRYmod} is $F_{0,v_\nu}$, not $F_{v_\nu}$.  In this case, the local model itself has semi-stable reduction over $\Spec O_{F_0,v_\nu}$, without needing to blow up.  However, since $F_{v_\nu}$ maps into $E_\nu$, the extension $O_{F_0,v_\nu} \to O_{E_\nu^\un}$ is necessarily ramified, and hence semi-stable reduction is lost upon base change.
\end{remark}

\begin{remark}
There is an obvious analog of Remark \ref{prod stack} in the present situation, where the moduli problem for $\CM_r$ is replaced by the analogous one with respect to our self-dual lattice $\Lambda$ (in particular, the polarization $\lambda$ in the resulting moduli problem is principal; when $F_0 = \BQ$, the resulting stack is denoted $\CM_{(n-1,1)}^{\textrm{Pap}}$ in \cite[\S2.3]{BHKRY}).
Furthermore, by the same argument as in Remark \ref{redsign}\eqref{inert sign}, the sign condition \eqref{signcond2} imposed in Definition \ref{glob princ pol} is redundant at all inert places $v$.  However, it is no longer the case that the sign condition is redundant at ramified places $v$ when $n$ is even (since in the ramified case, for any $n \in \BZ_{>0}$, both isometry types of $n$-dimensional $F_v/F_{0,v}$-hermitian spaces contain a self-dual lattice).  Finally, Remark \ref{redsign}\eqref{Q sign} transposes word-for-word to the present situation, cf.\ \cite[\S2.3]{BHKRY}.
\end{remark}

Here is the moduli problem mentioned in Theorem \ref{BHKRYmod}\eqref{BHKRY ii} (the \emph{Kr\"amer model}).  Let $n \geq 2$.

\begin{definition}\label{def:Kraemer}
The category functor $\CF^\nat_{K^\circ_{\wt G}}(\wt G)$ associates to each $O_{E}$-scheme $S$ the groupoid of tuples $(A_0,\iota_0,\lambda_0,A,\iota,\lambda, \CP)$, where $(A_0,\iota_0,\lambda_0,A,\iota,\lambda)\in \CF_{K^\circ_{\wt G}}(\wt G)(S)$, and where $\CP\subset \Lie A$ is an $O_F$-stable $\CO_S$-submodule which, Zariski-locally on $S$, is an $\CO_S$-free direct summand satisfying the \emph{rank condition} \eqref{rankcond}  and  the \emph{Kr\"amer--Eisenstein condition} \eqref{KE} below.
\end{definition}

The rank condition and the Kr\"amer--Eisenstein condition mentioned in Definition \ref{def:Kraemer} are conditions for every finite place $\nu$ of $E$.  Fixing $\nu$, let $p$ denote the residue characteristic of $\nu$, and choose an embedding $\alpha\colon \ov\BQ\to\ov\BQ_p$ inducing $\nu$ as before \eqref{alpha}.  For any $p$-adic place $w$ of $F$ and any $\BQ_p$-embedding $\psi\colon F_{w}^t\to\ov\BQ_p$, define $\Hom_{w,\psi}(F,\ov\BQ) \subset \Hom(F, \ov\BQ)$ as in \eqref{nogoodname}.  We recall that the resulting partition $\Hom(F, \ov\BQ) = \coprod_{w,\psi} \Hom_{w,\psi}(F,\ov\BQ)$ as in \eqref{Hom(F,ovBQ) decomp} depends only on $\nu$ up to labeling of the sets on the right-hand side.  Let
\[
   r_{w, \psi}^\Phi := \sum_{\varphi\in\Hom_{w,\psi}(F, \ov\BQ)\cap \Phi} r_\varphi .
\]
Then the rank condition for $\nu$ on the $O_F\otimes_\BZ \CO_S$-module $\CP$ states that in the decomposition analogous to \eqref{decLie_w} and \eqref{decLie_w,psi},
\begin{equation}\label{CPdec}
  \CP = \bigoplus_{w, \psi} \CP_{w,\psi},
\end{equation}
we have
\begin{equation}\label{rankcond}
   \rank_{\CO_S}\CP_{w,\psi}=r_{w, \psi}^\Phi
\end{equation}
for all $w,\psi$. Here, as in \eqref{decLie_w,psi}, the decomposition \eqref{CPdec} is defined when $S$ is an $O_{E_\nu^\un}$-scheme, and the rank condition for all $\nu$ then descends to a condition on $O_E$-schemes.

The Kr\"amer--Eisenstein condition for $\nu$ is similarly a condition on each summand $\CP_{w,\psi}$ in \eqref{CPdec}.  The statement of the condition depends on the restricted function $r|_{w,\psi} := r|_{\Hom_{w,\psi}(F,\ov\BQ)}$, and involves polynomials closely related to those in the formulation of the Eisenstein condition in \cite[\S8]{RZ2}.  For each $w,\psi$, let
\begin{equation}\label{C sets}
\begin{aligned}
   C_{w,\psi}^\Phi &:= \bigl\{\, \varphi \in \Hom_{w,\psi}(F,\ov \BQ) \cap \Phi \bigm| r_\varphi \neq 0 \,\bigr\},\\
   C_{w,\psi}^{\ov\Phi} &:= \bigl\{\, \varphi \in \Hom_{w,\psi}(F,\ov \BQ) \cap \ov\Phi \bigm| r_\varphi \neq 0 \,\bigr\}.
\end{aligned}
\end{equation}
Let $\pi$ be a uniformizer in $F_w$, and define the polynomials in $\ov\BQ_p[T]$,
\begin{equation}
   Q_{C_{w,\psi}^\Phi}(T) := \prod_{\varphi\in C_{w,\psi}^\Phi} \bigl(T - \varphi(\pi)\bigr) \in \ov\BQ_p[T]
   \quad\text{and}\quad
   Q_{C_{w,\psi}^{\ov\Phi}}(T) := \prod_{\varphi\in C_{w,\psi}^{\ov\Phi}} \bigl(T - \varphi(\pi)\bigr).
\end{equation}
Here and below we implicitly use $\alpha$ to identify $C_{w,\psi}^\Phi$ and $C_{w,\psi}^{\ov\Phi}$ with subsets of $\Hom_{\BQ_p}(F_w,\ov\BQ_p)$.
Then the Kr\"amer--Eisenstein condition on $\CP_{w,\psi}$ is that
\begin{equation}\label{KE}
   Q_{C_{w,\psi}^\Phi}(\pi \otimes 1)|_{\CP_{w,\psi}} = 0
   \quad\text{and}\quad
   Q_{C_{w,\psi}^{\ov\Phi}}(\pi \otimes 1)|_{\Lie_{w,\psi} A/ \CP_{w,\psi}} = 0;
\end{equation}
here the condition is defined when $S$ is a scheme over $\Spec O_L$ for any subfield $L \subset \ov\BQ_p$ large enough to contain the image of $E_\nu$ under $\alpha$, the image of $F_w^t$ under $\psi$, and the coefficients of the polynomials $Q_{C_{w,\psi}^\Phi}(T)$ and $Q_{C_{w,\psi}^{\ov\Phi}}(T)$.  The Kr\"amer--Eisenstein condition \eqref{KE} is independent of the choice of uniformizer $\pi$. Indeed, this  follows immediately from the next lemma, upon taking the field $L$  to be large enough to contain $\varphi(F_w)$ for all $\varphi \in C_{w,\psi}^\Phi \cup C_{w,\psi}^{\ov\Phi}$.

\begin{lemma}
Let $\pi$ and $\pi'$ be two uniformizers of $F_w$. Then the elements $\pi\otimes1-1\otimes \pi$ and $\pi'\otimes1-1\otimes \pi'$ are unit multiples of each other in the ring $O_{F_w} \otimes_{O_{F_w^t}} O_{F_w}$.
\end{lemma}

\begin{proof}
  Say $\pi' = u\pi$, with $u \in O_{F_w}^\times$. Let $e := [F_w:F_w^t]$.  Then there exist unique $u_0, \dotsc, u_{e-1}\in O_{F_w^t}$, with $u_0\in O_{F_w^t}^\times$, such that $u=u_0+u_1\pi+\cdots+u_{e-1}\pi^{e-1}$.  Hence
\[
   \pi'\otimes 1-1\otimes \pi' = u_0(\pi\otimes 1-1\otimes\pi)+\dotsb + u_{e-1}(\pi^e\otimes 1-1\otimes\pi^e) .
\]
Every term on the right-hand side is divisible by $\pi\otimes 1-1\otimes\pi$. Factoring out, we obtain an equation of the form
\[
   \pi'\otimes 1-1\otimes \pi' = (\pi\otimes 1-1\otimes \pi)(u_0+a) ,
\]
where $a\in O_{F_w}\otimes_{O_{F_w^t}}O_{F_w}$ lies in the maximal ideal generated by $\pi\otimes 1$ and $1\otimes\pi$. This completes the proof.
\end{proof}

The Kr\"amer--Eisenstein condition for $\nu$ is that \eqref{KE} holds for all $w,\psi$ with $w$ of the same residue characteristic as $\nu$.  The (full) Kr\"amer--Eisenstein condition is that the Kr\"amer--Eisenstein condition for $\nu$ holds for all $\nu$; this condition again descends to $O_E$-schemes.

As a first step towards understanding the rank and Kr\"amer--Eisenstein conditions, let $\wt E$ denote the composite of $E$ and the normal closure of $F$ in $\ov\BQ$, and let $O_{\wt E}'$ be the ring obtained from $O_{\wt E}$ by inverting the finitely many rational primes $p$ which ramify in $F$.  Then
\[
   O_F \otimes_\BZ O_{\wt E}' \cong \prod_{\varphi \in \Hom(F,\ov\BQ)} O_{\wt E}'.
\]
If $S$ is an $O_{\wt E}'$-scheme and $(A_0,\iota_0,\lambda_0,A,\iota,\lambda)$ is an $S$-point on $\CF_{K^\circ_{\wt G}}(\wt G)$, the $O_F$-action $\iota$ thus induces a canonical decomposition
\[
   \Lie A = \bigoplus_{\varphi \in \Hom(F,\ov\BQ)} \Lie_\varphi A.
\]
By the Kottwitz condition, $\rank_{\CO_S} \Lie_\varphi A = r_\varphi$.  By the definition of $O_{\wt E}'$, the set $\Hom_{w,\psi}(F,\ov\BQ)$ is a singleton set $\{\varphi\}$ for all places $w$ such that $S$ has a nonempty fiber of the same residue characteristic as $w$.  Therefore the rank condition imposes that $\CP_\varphi$ equals $\Lie_\varphi A$ or $0$ according as $\varphi \in \Phi$ or $\varphi\notin\Phi$.  Furthermore, the resulting $\CO_S$-module $\CP = \bigoplus_\varphi \CP_\varphi$ obviously satisfies the Kr\"amer--Eisenstein condition.  We conclude that the datum of $\CP$ in the moduli problem $\CF^\nat_{K^\circ_{\wt G}}(\wt G)$ is redundant over $\Spec O_{\wt E}'$---and hence, by descent, over $\Spec O_E'$, where $O_E'$ is the ring obtained from $O_E$ by inverting the primes $p$ that ramify in $F$.  In fact, a stronger statement is true.

\begin{lemma}\label{KE redundant lemma}
Keep the notation above, and assume that $S$ is a scheme over $\Spec O_L$ for $L \subset \ov\BQ_p$ sufficiently large. If $r|_{w,\psi}$ is banal or $w$ is unramified over $F_0$, then there exists a unique summand $\CP_{w,\psi} \subset \Lie_{w,\psi} A$ satisfying the rank condition \eqref{rankcond} and the Kr\"amer--Eisenstein condition \eqref{KE}.
\end{lemma}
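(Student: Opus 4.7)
The plan is to show, in both cases of the hypothesis, that the module $M := \Lie_{w,\psi} A$ admits a canonical direct sum decomposition $M = M^\Phi \oplus M^{\ov\Phi}$ into $\CO_S$-locally-free $O_F$-stable summands of ranks $r_{w,\psi}^\Phi$ and $r_{w,\psi}^{\ov\Phi}$, on which $\iota(\pi)$ is annihilated by $Q_{C_{w,\psi}^\Phi}(T)$ and $Q_{C_{w,\psi}^{\ov\Phi}}(T)$ respectively. Once such a decomposition is in hand, the Lemma follows: any candidate $\CP_{w,\psi}$ satisfying the Kr\"amer--Eisenstein condition is squeezed between $\mathrm{image}(Q_{C^{\ov\Phi}}(\iota(\pi))) \subset \CP_{w,\psi} \subset \ker Q_{C^\Phi}(\iota(\pi))$, and the decomposition will be constructed so that both bounding submodules coincide with $M^\Phi$; combined with the rank condition, this forces $\CP_{w,\psi} = M^\Phi$, giving both existence and uniqueness in one stroke.

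In the unramified case ($w$ unramified over $F_0$), the ring $O_{F,w}$ is \'etale over $O_{F_0,v}$, so after base change to $\Spec O_{E_\nu^\un}$ the algebra $O_F \otimes_{O_{F_0,v}, \psi|_{F_{0,v}^t}} \CO_S$ splits as a product of factors indexed by the extensions $\psi': F_w^t \to \ov\BQ_p$ of $\psi|_{F_{0,v}^t}$. This produces a canonical decomposition of $M$; grouping summands according to whether the corresponding $\psi'$ lies in $\Phi$ or in $\ov\Phi$ (well-defined in the unramified case, where $\Hom_{w,\psi}(F,\ov\BQ)$ contains no pair of $F/F_0$-conjugates) gives $M^\Phi$ and $M^{\ov\Phi}$. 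The Kottwitz condition then verifies the ranks and characteristic polynomial of $\iota(\pi)$ on each piece.

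In the banal case, the signature constraint $r_\varphi + r_{\ov\varphi} = n$ combined with $r|_{w,\psi} \subset \{0,n\}$ implies that $C^\Phi_{w,\psi}$ and $C^{\ov\Phi}_{w,\psi}$ consist of disjoint sets of embeddings with no $F/F_0$-conjugate pair lying in $C^\Phi_{w,\psi} \sqcup C^{\ov\Phi}_{w,\psi}$. The Eisenstein condition imposed via Theorem \ref{flatsub}\eqref{b} as part of Definition \ref{def RSZ glob} pins down the minimal polynomial of $\iota(\pi)$ on $M$ to divide $Q_{C^\Phi}(T) \cdot Q_{C^{\ov\Phi}}(T)$, and together with the Kottwitz condition on the characteristic polynomial and the non-conjugacy observation above, this allows the desired splitting to be produced by applying the Chinese Remainder Theorem in $\CO_S[T]$ modulo the minimal polynomial.

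The main obstacle will be verifying the last step of the banal case in the presence of ramification of $v$ over $p$: the roots $\varphi(\pi)$ all reduce to zero modulo the maximal ideal of $O_L$, so naive Bezout coprimality of $Q_{C^\Phi}$ and $Q_{C^{\ov\Phi}}$ in $\CO_S[T]$ fails. I expect to resolve this by extracting the splitting directly from the finer content of the Eisenstein condition of \cite[(4.10)]{RSZ3}, which, because it makes the corresponding local model factor trivial (see \cite[App.~B]{RSZ3}), already forces $\Lie_{w,\psi} A$ to be the pullback of this trivial decomposition along the local model diagram.
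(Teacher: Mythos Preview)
Your banal-case outline is essentially the paper's argument: the Eisenstein condition forces the kernel of $\Lambda_{w,\psi,S} \twoheadrightarrow \CQ_{w,\psi}$ to be exactly $Q_{A_{w,\psi}}(\pi\otimes 1)\cdot\Lambda_{w,\psi,S}$, and from this one reads off that $\ker\bigl(Q_{C^\Phi_{w,\psi}}(\pi\otimes 1)\mid \CQ_{w,\psi}\bigr) = Q_{C^{\ov\Phi}_{w,\psi}}(\pi\otimes 1)\cdot\CQ_{w,\psi}$ is a locally free summand of the correct rank. So your instinct to ``extract the splitting from the finer content of the Eisenstein condition'' is right, and the paper's computation with $\cok(Q_{C^\Phi}|_{\CQ}) \cong \cok(Q_{C^\Phi}|_{\Lambda})$ is exactly how one makes this precise without any coprimality of $Q_{C^\Phi}$ and $Q_{C^{\ov\Phi}}$ over $\CO_S$.

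Your unramified-case argument, however, has a genuine gap. The hypothesis is only that $w$ is unramified over $F_0$; the place $v$ of $F_0$ below $w$ may well be \emph{ramified} over $p$ (Section~\ref{s:global} assumes only that the places in $\CV_\ram$ are unramified over $\BQ$). The summand $\Lie_{w,\psi} A$ is already the finest piece available from idempotents: it is a module over $O_{F,w} \otimes_{O_{F_w^t},\psi} \CO_S$, which is a \emph{local} ring whenever $F_w/F_w^t$ (equivalently $F_{0,v}/\BQ_p$) is nontrivially ramified. There is no further product decomposition indexed by extensions of $\psi|_{F_{0,v}^t}$ to $F_w^t$, because $\psi$ already specifies that extension; and the elements of $\Hom_{w,\psi}(F,\ov\BQ)$ can lie partly in $\Phi$ and partly in $\ov\Phi$, with all of them reducing to the same point modulo the maximal ideal. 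So the ``\'etale splitting'' you invoke does not exist in the case of interest.

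The paper's proof in this non-banal unramified case is accordingly quite different and uses ingredients you do not mention. It exploits the standing fake-Drinfeld signature hypothesis of Section~\ref{s:global} to identify the only two non-banal pairs $(w_\nu,\psi_\alpha)$ and $(\ov w_\nu,\ov\psi_\alpha)$; on the first it uses the Eisenstein condition of \cite[(8.2)]{RZ2} (imposed in the moduli problem) together with \cite[Lem.~4.10]{RZ2} to locate the kernel inside $Q_{A}(\pi\otimes 1)\cdot\Lambda$; and for the second pair it uses the \emph{self-duality of $\Lambda$} (the principal-polarization hypothesis of \S\ref{s:global}) to produce a perfect pairing relating the two pieces, transferring the needed containment from one to the other. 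None of this is visible from \'etaleness of $O_{F,w}/O_{F_0,v}$ alone.
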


\begin{proof}
To show existence and uniqueness of $\CP_{w,\psi}$, it suffices to solve the analogous problem on the local model, as in the proofs of Theorems \ref{naive flat O_E,nu} and \ref{flatsub}.  The local model is a moduli functor of $O_F \otimes_\BZ \CO_S$-linear quotients $\Lambda \otimes_\BZ \CO_S \surj \CQ$ satisfying certain conditions.  Since $L$ is sufficiently large, we have the usual direct sum decompositions
\[
   \bigoplus_{w',\psi'} \Lambda_{w',\psi',S}  \surj \bigoplus_{w',\psi'} \CQ_{w',\psi'},
\]
where $\Lambda_{w',\psi',S} := \Lambda \otimes_{O_F} O_{F,w'} \otimes_{O_{F_{w'}^t},\psi'} \CO_S$.  Our problem is to show that there exists a unique subbundle $\CP_{w,\psi}' \subset \CQ_{w,\psi}$ satisfying the (analogs on the local model of) the rank condition and the Kr\"amer--Eisenstein condition.  Throughout the rest of the proof, we will make use of the set
\[
   A_{w,\psi} := \bigl\{\, \varphi \in \Hom_{w,\psi}(F,\ov \BQ)  \bigm| r_\varphi = n \,\bigr\}
\]
and the polynomial
\begin{equation}\label{Q_A_w,psi}
   Q_{A_{w,\psi}}(T) := \prod_{\varphi\in A_{w,\psi}} \bigl(T - \varphi(\pi)\bigr),
\end{equation}
cf.\ \cite[(2.3), (2.8)]{RZ2} (in the case of the extension $F_w/\BQ_p$).  As before, in \eqref{Q_A_w,psi} we have implicitly used $\alpha$ to identify $A_{w,\psi}$ with a subset of $\Hom_{\BQ_p}(F_w,\ov\BQ_p)$.

First suppose that $r|_{w,\psi}$ is banal.  Then $C_{w,\psi}^\Phi \cup C_{w,\psi}^{\ov\Phi} = A_{w,\psi}$, and, by the definition of the moduli problem, $\CQ_{w,\psi}$ is required to satisfy the Eisenstein condition $Q_{A_{w,\psi}}(\pi \otimes 1)|_{\CQ_{w,\psi}} = 0$, cf.\ \cite[(B.5)]{RSZ3}. Furthermore, the Kottwitz condition in the banal case implies that $\CQ_{w,\psi}$ has $\CO_S$-rank $n \cdot \# A_{w,\psi}$. This forces
\[
   \ker\bigl[\Lambda_{w,\psi,S} \surj \CQ_{w,\psi}\bigr] = Q_{A_{w,\psi}}(\pi \otimes 1) \cdot \Lambda_{w,\psi,S} \subset Q_{C_{w,\psi}^\Phi}(\pi \otimes 1) \cdot \Lambda_{w,\psi,S},
\]
cf.\ \cite[(B.6)]{RSZ3}.  Hence
\begin{equation}\label{coks}
   \cok\bigl(Q_{C_{w,\psi}^\Phi}(\pi \otimes 1)|_{\CQ_{w,\psi}}\bigr) \cong \cok\bigl(Q_{C_{w,\psi}^\Phi}(\pi \otimes 1)|_{\Lambda_{w,\psi,S}}\bigr).
\end{equation}
Since the right-hand side of \eqref{coks} is a free $\CO_S$-module of rank $n \cdot \# C_{w,\psi}^\Phi = r_{w,\psi}^\Phi$, we conclude that $\ker(Q_{C_{w,\psi}^\Phi}(\pi \otimes 1)|_{\CQ_{w,\psi}})$ is a direct summand of $\CQ_{w,\psi}$ of the same rank $r_{w,\psi}^\Phi$.  The rank condition \eqref{rankcond} and the first relation in the Kr\"amer--Eisenstein condition \eqref{KE} then force the equality
\begin{equation}\label{also}
   \CP_{w,\psi}' = \ker\bigl(Q_{C_{w,\psi}^\Phi}(\pi \otimes 1)|_{\CQ_{w,\psi}}\bigr).
\end{equation}
(One sees easily that \eqref{also} also equals $Q_{C_{w,\psi}^{\ov\Phi}}(\pi \otimes 1) \cdot \CQ_{w,\psi}$.)  This completes the proof in the banal case.

Now suppose that $w$ is unramified over $F_0$ and $r|_{w,\psi}$ is non-banal.  Then $(w,\psi)$ is one of the pairs $(w_\nu,\psi_\alpha)$ and $(\ov w_\nu,\ov\psi_\alpha)$, where $\psi_\alpha$ denotes the embedding $F_{w_\nu}^t \to \ov\BQ_p$ induced by $\alpha\circ\varphi_0$. (These pairs are distinct, by unramifiedness).  In the case of $(\ov w_\nu,\ov\psi_\alpha)$, since the signature type is of fake Drinfeld type, we have $\ov\varphi_0 \in \Hom_{\ov w_\nu, \ov\psi_\alpha}(F,\ov\BQ)$ with $r_{\ov\varphi_0} = 1$, and $\CQ_{\ov w_\nu, \ov\psi_\alpha}$ satisfies the Eisenstein condition given in \cite[(8.2)]{RZ2}.  Let
\[
   \CK_{\ov w_\nu, \ov\psi_\alpha} := \ker\Bigl[\Lambda_{\ov w_\nu,\ov\psi_\alpha,S} \overset{f}\surj \CQ_{\ov w_\nu, \ov\psi_\alpha}\Bigr].
\]
Taking $V = Q_{A_{\ov w_\nu,\ov \psi_\alpha}}(\pi \otimes 1) \cdot \Lambda_{\ov w_\nu,\ov\psi_\alpha,S}$ and $W = \Lambda_{\ov w_\nu,\ov\psi_\alpha,S}$ in \cite[Lem.~4.10]{RZ2}, and using the Eisenstein condition, we conclude that $\CK_{\ov w_\nu, \ov\psi_\alpha} \subset Q_{A_{\ov w_\nu,\ov \psi_\alpha}}(\pi \otimes 1) \cdot \Lambda_{\ov w_\nu,\ov\psi_\alpha,S}$; comp.\ the proof of \cite[Lem.\ 8.6]{RZ2}.  From here, since $C_{\ov w_\nu,\ov\psi_\alpha}^\Phi \subset A_{\ov w_\nu,\ov\psi_\alpha}$ in the present case, the same argument as in the previous paragraph shows that we again must have $\CP_{\ov w_\nu,\ov\psi_\alpha}' = \ker(Q_{C_{\ov w_\nu,\ov\psi_\alpha}^\Phi}(\pi \otimes 1)|_{\CQ_{\ov w_\nu,\ov\psi_\alpha}})$.

In the case of the pair $(w_\nu,\psi_\alpha)$, we have $C_{w_\nu,\psi_\alpha}^{\ov\Phi} \subset A_{w_\nu,\psi_\alpha}$.  Therefore the rank condition \eqref{rankcond} and the second relation in the Kr\"amer--Eisenstein condition \eqref{KE} force that the inverse image of $\CP_{w_\nu,\psi_\alpha}'$ in $\Lambda_{w_\nu,\psi_\alpha,S}$ is $Q_{C_{w_\nu,\psi_\alpha}^{\ov\Phi}}(\pi \otimes 1) \cdot \Lambda_{w_\nu,\psi_\alpha,S}$.  This uniquely determines $\CP_{w_\nu,\psi_\alpha}'$ if it exists.  In turn, existence holds if and only if
\begin{equation}\label{whocares}
   \CK_{w_\nu,\psi_\alpha} \subset Q_{C_{w_\nu,\psi_\alpha}^{\ov\Phi}}(\pi \otimes 1) \cdot \Lambda_{w_\nu,\psi_\alpha,S},
\end{equation}
where $\CK_{w_\nu,\psi_\alpha} := \ker[\Lambda_{w_\nu,\psi_\alpha,S} \surj \CQ_{w_\nu, \psi_\alpha}]$.  To show the containment \eqref{whocares}, it suffices to show that
\begin{equation}\label{want}
   \CK_{w_\nu,\psi_\alpha} \subset Q_{A_{w_\nu,\psi_\alpha}}(\pi \otimes 1) \cdot \Lambda_{w_\nu,\psi_\alpha,S}.
\end{equation}
Now, by the formalism of local models, self-duality of the lattice $\Lambda$ gives rise to a perfect pairing
\[
   \Lambda_{w_\nu,\psi_\alpha,S} \times \Lambda_{\ov w_\nu,\ov\psi_\alpha,S} \to \CO_S
\]
under which the $O_F$-actions on the two factors are conjugate-adjoint, and such that $\CK_{w_\nu,\psi_\alpha} \subset \Lambda_{w_\nu,\psi_\alpha,S}$ and $\CK_{\ov w_\nu, \ov\psi_\alpha} \subset \Lambda_{\ov w_\nu,\ov\psi_\alpha,S}$ are the perp-modules of each other.  Thus \eqref{want} is equivalent to the containment
\begin{equation}\label{have}
   \bigl(Q_{A_{w_\nu,\psi_\alpha}}(\pi \otimes 1) \cdot \Lambda_{w_\nu,\psi_\alpha,S}\bigr)^\perp \subset \CK_{\ov w_\nu, \ov\psi_\alpha}.
\end{equation}
It is a pleasant exercise to show that the left-hand side in \eqref{have} equals
\[
   Q_0(\ov\pi \otimes 1) Q_{A_{\ov w_\nu,\ov\psi_\alpha}}(\ov\pi \otimes 1) \cdot \Lambda_{\ov w_\nu,\ov\psi_\alpha,S},
\]
where the polynomials $Q_{A_{\ov w_\nu,\ov\psi_\alpha}}$ and $Q_0(T) := T - \ov\varphi_0(\ov\pi)$ are defined for the field $F_{\ov w_\nu}$ with respect to the uniformizer $\ov\pi$.  Thus the containment \eqref{have} holds by the Eisenstein condition \cite[(8.2)]{RZ2}, which completes the proof.
\end{proof}

It follows from Lemma \ref{KE redundant lemma} that the natural forgetful morphism
\begin{equation}\label{forget}
   \CF^\nat_{K^\circ_{\wt G}}(\wt G) \to \CF_{K^\circ_{\wt G}}(\wt G)
\end{equation}
is an isomorphism over the open locus $\Spec (O_E[\CV_\ram\i]) \subset \Spec (O_E)$.  On the other hand,  let $\nu$ now be a place lying over some $v_\nu\in\CV_\ram$.  Then the functor $\CF^\nat_{K^\circ_{\wt G}}(\wt G)_{O_{E, (\nu)}}$ can be understood via the corresponding local model for $\CM_{K^\circ_{\wt G}}(\wt G)_{O_{E, (\nu)}}$.  As in the proof of Theorem \ref{BHKRYmod},  the base change to $\Spec O_{E_\nu^\un}$ of the local model $\BM$ is a product
$$
   \BM_{O_{E_\nu^\un}}=\prod_{v, \psi_0}\BM(v, \psi_0)_{O_{E_\nu^\un}}. 
$$
By Lemma \ref{KE redundant lemma}, for all factors except the one indexed by $(v_\nu, \psi_{0,\alpha})$, the datum of $\CP$ is redundant; here $\psi_{0,\alpha}$ is as in the proof of Theorem \ref{BHKRYmod}. The factor $\BM(v_\nu, \psi_{0,\alpha})_{O_{E_\nu^\un}}$ is the base change to $O_{E_\nu^\un}$ of the Pappas local model for $\GU_n(F_{w_\nu}/F_{0,v_\nu})$, for a self-dual lattice and signature type $(n-1,1)$, cf.\ \cite{P}.   The datum of $\CP$ corresponds to  a point of the \emph{Kr\"amer local model} \cite{Kr} mapping to the Pappas local model.

\appendix
\section{Local model diagram for unramified PEL data of type $A$}\label{s:LM}
In this appendix, we change notation from the main body of the paper.  Let $(F, B, V, \sform)$ be rational data of PEL type over $\BQ_p$ in the sense of \cite[\S1.38]{RZ1}. Let $*$ be the induced involution on $B$. Let $O_B$ be a maximal order of $B$ invariant under $*$. Let $F_0$ denote the invariants of $*$ in $F$, and let $O_{F_0}$ and $O_F$ denote the respective rings of integers. Our main purpose is to prove the following theorem, which is a special case of \cite[Th.~3.16]{RZ1} when $p\ne 2$. We adopt the terminology of loc.~cit.

\begin{theorem}\label{normal forms}
Assume that $O_F$ is an \'etale $O_{F_0}$-algebra free of rank $2$. 

Let $\CL$ be a self-dual multichain of $O_B$-lattices in $V$. Let $T$ be a $\BZ_p$-scheme on which $p$ is locally nilpotent. Let $\{M_\Lambda\mid \Lambda\in\CL\}$ be a polarized multichain of $O_B\otimes_{\BZ_p}\CO_T$-modules of type $(\CL)$. Then locally for the \'etale topology on $T$, the polarized multichain $\{M_\Lambda\}$ is isomorphic to $\CL\otimes_{\BZ_p}\CO_T$. 

Furthermore, if $\{M'_\Lambda\}$ is a second polarized multichain of $O_B\otimes_{\BZ_p}\CO_T$-modules of type $(\CL)$, then the functor of isomorphisms of polarized multichains on the category of $T$-schemes,
$$
   T'\mapsto \Isom\bigl(\{M_\Lambda\otimes_{\CO_T}\CO_{T'}\}, \{M'_\Lambda\otimes_{\CO_T}\CO_{T'}\}\bigr),
$$
is representable by a smooth affine $T$-scheme. 
\end{theorem}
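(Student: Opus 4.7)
The plan is to follow the proof of \cite[Th.~3.16]{RZ1}, which establishes both assertions under the additional hypothesis $p \neq 2$, and to identify how the hypothesis that $O_F$ is étale of rank $2$ over $O_{F_0}$ eliminates the obstruction at $p = 2$. First, using only the combinatorial reductions of loc.\ cit.\ (which are characteristic-independent, as they rely on the self-duality structure of $\CL$ and the decomposition of $O_B$-modules via $*$-invariant idempotents), I would reduce both assertions to the case of a single pair $(\Lambda,\Lambda^*)$ consisting of a lattice and its dual in $V$.

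Next, after replacing $T$ by an étale cover, I would split the quadratic étale extension $O_F/O_{F_0}$. Since $O_F$ is étale of rank $2$ over $O_{F_0}$, there is a finite étale cover $T'\to T$ over which $O_F\otimes_{\BZ_p}\CO_{T'}$ decomposes as a product of two étale $\CO_{T'}$-algebras exchanged by $*$. Writing correspondingly $O_B\otimes_{\BZ_p}\CO_{T'} = O_{B^+}\times O_{B^-}$ with $O_{B^-}\cong O_{B^+}^{\mathrm{op}}$ via $*$, the polarized multichain $\{M_\Lambda\}$ decomposes componentwise as $M_\Lambda = M_\Lambda^+\oplus M_\Lambda^-$, and the polarization becomes a perfect $\CO_{T'}$-bilinear pairing between $M_\Lambda^+$ and $M_{\Lambda^*}^-$. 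This is the crucial feature of the type $A$ case: the pairing is between two \emph{distinct} components rather than a pairing of a module with itself, so there is no symmetric-versus-alternating dichotomy and no ``divide by $2$'' step — this is precisely what fails in types $C$ and $D$ at $p=2$, and the present hypothesis avoids it. Consequently $M_{\Lambda^*}^-$ is canonically the $\CO_{T'}$-linear dual of $M_\Lambda^+$ (with $O_{B^-}$-action transported through $*$), and all the polarized data is recorded by the single unpolarized chain $\{M_\Lambda^+\}$ of $O_{B^+}\otimes\CO_{T'}$-modules.

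Local triviality of such an unpolarized chain of projective $O_{B^+}\otimes\CO_{T'}$-modules of prescribed type is classical and characteristic-independent: $O_{B^+}$ is a hereditary order, Morita-equivalent to a maximal order in a commutative étale algebra, and chains of projective modules of fixed type admit an étale-local normal form (this is the content of the type $A$ part of \cite[\S3]{RZ1}, which makes no use of the auxiliary hypothesis $p\neq 2$). Combining this with the duality in the previous paragraph trivializes $\{M_\Lambda\}_{T'}$ as a polarized multichain, and descent back to $T$ follows from the fact — established along the way — that $\uAut(\CL)_{\BZ_p}$ is smooth: after pulling back along the splitting cover of $O_F/O_{F_0}$ it becomes an inner form of a product of $\GL$'s, and smoothness descends. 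The main obstacle to watch for is this bookkeeping through the étale base-change, but none of it invokes $p\neq 2$.

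Finally, for the second assertion, once local triviality is established the functor
\[
   T'\mapsto\Isom\bigl(\{M_\Lambda\otimes\CO_{T'}\},\{M'_\Lambda\otimes\CO_{T'}\}\bigr)
\]
is an $\uAut(\CL)_T$-torsor (\'etale-locally trivial), and representability by a smooth affine $T$-scheme follows from the smoothness and affineness of $\uAut(\CL)_{\BZ_p}$ proved in the previous step.
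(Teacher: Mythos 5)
Your proposal matches the paper's proof in its essentials: both reduce the inert (unramified field extension) case to the split case by a finite \'etale base change, and both rest on the observation that the split-case arguments of \cite{RZ1} (Th.~3.11 together with Lem.~A.8) remain valid at $p=2$. The paper specifies the cover explicitly as $\wt T = T \times_{\Spec\BZ_p}\Spec O_{F^t}$ with $F^t \subset F$ the maximal unramified subextension of $\BQ_p$ (so that $\wt F = \wt F_0 \times \wt F_0$), where you only assert its existence; your remark that the pairing in the split case is between two distinct $*$-exchanged components, so no symmetric/alternating dichotomy arises, is exactly the reason the cited results survive $p=2$, though the paper leaves this implicit.
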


\begin{proof} We may assume that $F_0$ is a field. If $F/F_0$ is split, i.e., we are in the case (I) of \cite[p.~135]{RZ1}, then the first claim follows from \cite[Th.~3.11]{RZ1} by \cite[Lem.~A.8]{RZ1}, which reduces this case to the unpolarized case. The proofs of these facts are valid for $p=2$, and the trivialization even exists locally for the Zariski topology on $T$. That the Isom-functor is representable by an affine scheme of finite type is trivial. Smoothness follows from \cite[Th.~3.11]{RZ1}. 

Now let $F/F_0$ be an unramified field extension. Let $F^t$ denote the maximal unramified subextension of $\BQ_p$ in $F$, and let $O_{F^t}$ denote its ring of integers.  Set
\begin{equation*}
\begin{gathered}
   \wt F_0:=F_0\otimes_{\BQ_p} F^t, \quad 
      \wt F:=F\otimes_{\BQ_p} F^t, \quad 
      \wt B:=B\otimes_{\BQ_p} F^t, \quad 
      \wt V:=V\otimes_{\BQ_p} F^t, \quad
      \wt{\sform} := \sform \otimes \tr_{F^t/\BQ_p},  \\ 
   O_{\wt F_0}:=O_{F_0}\otimes_{\BZ_p} O_{F^t}, \quad 
      O_{\wt F}:=O_F\otimes_{\BZ_p} O_{F^t}, \quad 
      O_{\wt B}:=O_B\otimes_{\BZ_p} O_{F^t}, \quad 
      \wt\CL:=\CL\otimes_{\BZ_p} O_{F^t} .
\end{gathered}
\end{equation*}
Then $(\wt F, \wt B, \wt V, \wt{\sform}, O_{\wt B}, \wt \CL)$ are integral data of PEL type\footnote{In particular, $O_{\wt B}$ is indeed a maximal order since the extension $F^t/\BQ_p$ is unramified.  Also, strictly speaking, $\wt\CL = \{\Lambda \otimes_{\BZ_p} O_{F^t} \mid \Lambda \in \CL\}$ is not a multichain of $O_{\wt B}$-lattices in the precise sense of \cite[Def.\ 3.4]{RZ1}, since $\wt B$ has more simple factors than $B$.  But $\wt\CL$ gives rise to a notion of multichain of $O_{\wt B} \otimes_{\BZ_p} \CO_T$-modules of type $(\wt\CL)$ in exactly the same way as \cite[Def.\ 3.10]{RZ1}, and this notion is the same as the one for the honest multichain of lattices generated by $\wt \CL$ in $\wt V$.} such that $\wt F=\wt F_0\times \wt F_0$, i.e., $\wt F/\wt F_0$ is a product of split quadratic  extensions. Set $\wt  M_\Lambda := M_\Lambda\otimes_{\BZ_p} O_{F^t}$. Then $\{\wt M_\Lambda\}$ is a polarized multichain of $O_{\wt B}\otimes_{\BZ_p}\CO_T$-modules of type $(\wt\CL)$. From the previous case, we obtain that locally for the Zariski topology on $T$, there exists an isomorphism between $\{\wt M_\Lambda\}$ and $\wt\CL\otimes_{\BZ_p}\CO_T$. Set $\wt T := T\times_{\Spec \BZ_p}\Spec O_{F^t}$. Then $\wt T$ is an \'etale covering of $T$, and there are natural isomorphisms of polarized multichains of $O_{B}\otimes_{\BZ_p}\CO_{\wt T}$-modules of type $(\CL)$,
$$
   \{ M_\Lambda\otimes_{\CO_T}\CO_{\wt T}\}\cong \{\wt M_\Lambda\}, \quad \{\CL\otimes_{\BZ_p}\CO_{\wt T}\}\cong \{\wt\CL\otimes_{\BZ_p}\CO_T\} .
$$
Therefore there is a trivialization of the multichain $\{ M_\Lambda\otimes_{\CO_T}\CO_{\wt T}\}$, locally on $\wt T$. In the same way, the smoothness of the Isom-scheme follows from the previous case. 
\end{proof}

As a consequence, the entire formalism of local models and the local model diagram in \cite{RZ1} carries over to the $p=2$ case for PEL data as in Theorem \ref{normal forms}.  In particular, in the context of the main body of the paper, this applies when all $2$-adic places of the totally real field are unramified in the CM field.

\end{document}